\DeclareRobustCommand{\SkipTocEntry}[5]{}
\definecolor{bytreebluevertex}{RGB}{128,128,255}
\definecolor{bytreeblueline}{RGB}{77,77,255}
\definecolor{bytreeyellowline}{RGB}{255,169,82}
\definecolor{bytreeyellowvertex}{RGB}{255,232,191}
\DeclareSymbolFont{cyrletters}{OT2}{wncyr}{m}{n}
\DeclareMathSymbol{\Sha}{\mathalpha}{cyrletters}{"58}
\definecolor{amethyst}{rgb}{0.6, 0.4, 0.8}
\definecolor{atomictangerine}{rgb}{1.0, 0.6, 0.4}
\definecolor{deeppeach}{rgb}{1.0, 0.8, 0.64}
\definecolor{eggshell}{rgb}{0.94, 0.92, 0.84}
\definecolor{lightapricot}{rgb}{0.99, 0.84, 0.69}
\definecolor{lemonchiffon}{rgb}{1.0, 0.98, 0.8}
\definecolor{roundabout}{rgb}{1.0, 0.91, 0.75}
\definecolor{atomictangerine}{rgb}{1.0, 0.6, 0.4}
\definecolor{ruby}{rgb}{0.88, 0.07, 0.37}
\definecolor{sapphire}{rgb}{0.03, 0.15, 0.4}
\def\rootsep{0.03}               % horizontal space between roots in a cluster picture
\def\clustersep{0.06}            % horizontal and vertical space between parent & child cluster
\def\cnamescale{0.4}             % cluster names font size 
\def\cdepthscale{0.4}            % cluster depths and signs font size 
\def\cltopskip{1pt}              % space above a cluster picture
\def\clbottomskip{1pt}           % space below a cluster picture
\def\rootscale{0.5}   \def\rootcolor{gray}
\def\rootscaleA{0.7}  \def\rootcolorA{yellow}
\def\rootscaleB{0.5}  \def\rootcolorB{green}
\def\rootscaleC{0.4}  \def\rootcolorC{sapphire}
\def\rootscaleD{0.45}  \def\rootcolorD{ruby}
\tikzset{
  clA/.style = {very thick,black},
  clB/.style = {thick,purple}
}
\def\graphdslabelscale{0.6}
\def\GraphScale{0.6}
\tikzset{
  root/.style = {circle,scale=\rootscale,fill=\rootcolor},
    rc/.style 2 args = {right=#1*1.5*\clustersep of {#2.east|-first},root}, rr/.style = {right=\rootsep of {#1.east|-first},root},
  roott/.style = {circle,inner sep=-2pt,minimum size=5pt,black,font=\ttfamily\footnotesize},
    rct/.style 2 args = {right=#1*1.5*\clustersep of {#2.east|-first},roott}, rrt/.style = {right=\rootsep of {#1.east|-first},roott},
  rootA/.style = {circle,scale=\rootscaleA,ball color=\rootcolorA},
    rcA/.style 2 args = {right=#1*1.5*\clustersep of {#2.east|-first},rootA}, rrA/.style = {right=\rootsep of {#1.east|-first},rootA},
  rootB/.style = {circle,scale=\rootscaleB,ball color=\rootcolorB},
    rcB/.style 2 args = {right=#1*1.5*\clustersep of {#2.east|-first},rootB}, rrB/.style = {right=\rootsep of {#1.east|-first},rootB},
  rootC/.style = {diamond,scale=\rootscaleC,ball color=\rootcolorC},
    rcC/.style 2 args = {right=#1*1.5*\clustersep of {#2.east|-first},rootC}, rrC/.style = {right=\rootsep of {#1.east|-first},rootC},
  rootD/.style = {circle,scale=\rootscaleD,ball color=\rootcolorD},
    rcD/.style 2 args = {right=#1*1.5*\clustersep of {#2.east|-first},rootD}, rrD/.style = {right=\rootsep of {#1.east|-first},rootD},
  cluster/.style = {draw=black!90,thick,rounded corners,inner sep=22*\clustersep,outer xsep=22*\clustersep,fit=#1},
  clabel/.style  = {anchor=west,scale=\cdepthscale,black,inner sep=0,outer xsep=1,outer ysep=0},
  clabelL/.style = {above right=-\clustersep of #1t.north east,clabel},
  clabelD/.style = {below right=-\clustersep of #1t.south east,clabel},
  clouter/.style = {inner sep=0,outer sep=0,fit=#1}
}
\def\Cluster #1 = #2;{\node[cluster=#2] (#1) {};}
\def\ClusterL #1[#2] = #3;{
  \node[cluster=#3] (#1t) {}; \node[clabelL=#1] (#1l) {$#2$}; \node[clouter=(#1t)(#1l)] (#1) {};}
\def\ClusterD #1[#2] = #3;{
  \node[cluster=#3] (#1t) {}; \node[clabelD=#1] (#1d) {$#2$}; \node[clouter=(#1t)(#1d)] (#1) {};}
\def\ClusterLD #1[#2][#3] = #4;{
  \node[cluster=#4] (#1t) {}; \node[clabelL=#1] (#1l) {$#2$}; 
  \node[clabelD=#1] (#1d) {$#3$}; \node[clouter=(#1t)(#1l)(#1d)] (#1) {};}
\def\ClusterLDName #1[#2][#3][#4] = #5;{
  \node[cluster=#5] (#1t) {}; \node[clabelL=#1] (#1l) {$#2$}; 
  \node[clabelD=#1] (#1d) {$#3$}; 
  \node[scale=\cnamescale,above=\clustersep/3 of #1t,inner sep=0, outer sep=0] (#1n) {$#4$}; 
  \node[clouter=(#1l)(#1d)(#1t)] (#1) {};}
\newcommand{\Root}[4][]{
  \ifx\relax#2\relax\node[rr#1=#3] (#4) {};\else\node[rc#1={#2}{#3}] (#4) {};\fi}
\newcommand{\RootT}[5][]{
  \ifx\relax#2\relax\node[rrt#1=#3] (#4) {#5};\else\node[rct#1={#2}{#3}] (#4) {#5};\fi}
\def\frob(#1)(#2){\path[draw,thick,shorten <=-22*\clustersep,shorten >=-22*\clustersep](#1.east)--(#2.west|-#1){};}
\def\pb#1{\pbox[c]{\textwidth}{\hfil #1\hfil}}
\long\def\clusterpicture#1\endclusterpicture{\pb{\vbox to \cltopskip{\vfill}\\%
  \begin{tikzpicture}\node[coordinate] (first) {};#1\end{tikzpicture}\\[-11pt]\vbox to \clbottomskip{\vfill}}}   
\long\def\clusterpictureopt#1#2\endclusterpicture{\pb{\vbox to \cltopskip{\vfill}\\%
  \begin{tikzpicture}[#1]\node[coordinate] (first) {};#2\end{tikzpicture}\\[-11pt]\vbox to \clbottomskip{\vfill}}}
\def\pb#1{\pbox[c]{\textwidth}{\hfil #1\hfil}}
\def\GraphVertices{\SetVertexNormal[Shape=circle, FillColor=blue!50, LineColor=blue!50, LineWidth=0.8pt]
  \tikzset{VertexStyle/.append style = {inner sep=0.5pt,minimum size=0.3em,font = \tiny\bfseries}}}
\def\BlueEdges{  \SetUpEdge[lw=0.8pt,color=blue!70]
   \tikzset{EdgeStyle/.append style = {shorten <=0.5pt,shorten >=0.5pt}}}
\def\LoopW(#1){
  \path[draw,-,thick,color=blue!70] (#1) edge[out=155,in=90] ($(#1)-(1.3,0)$);
  \path[draw,-,thick,color=blue!70] (#1) edge[out=210,in=270] ($(#1)-(1.3,0)$);
}
\def\LoopE(#1){
  \path[draw,-,thick,color=blue!70] (#1) edge[out=25,in=90] ($(#1)+(1.3,0)$);
  \path[draw,-,thick,color=blue!70] (#1) edge[out=-25,in=270] ($(#1)+(1.3,0)$);
}
\def\LoopS(#1){
  \path[draw,-,thick,color=blue!70] (#1) edge[out=115,in=180] ($(#1)+(0,1.2)$);
  \path[draw,-,thick,color=blue!70] (#1) edge[out=65,in=0] ($(#1)+(0,1.2)$);
}
\def\LoopN(#1){
  \path[draw,-,thick,color=blue!70] (#1) edge[out=-115,in=180] ($(#1)-(0,1.2)$);
  \path[draw,-,thick,color=blue!70] (#1) edge[out=-65,in=0] ($(#1)-(0,1.2)$);
}
\def\EdgeW(#1){
  \path[draw,-,thick,color=blue!70] (#1+) edge[out=180,in=90] ($(#1+)-(1.3,0.3)$);
  \path[draw,-,thick,color=blue!70] (#1-) edge[out=180,in=270] ($(#1+)-(1.3,0.3)$);
}
\def\EdgeE(#1){
  \path[draw,-,thick,color=blue!70] (#1+) edge[out=0,in=90] ($(#1-)+(1.3,0.3)$);
  \path[draw,-,thick,color=blue!70] (#1-) edge[out=0,in=270] ($(#1-)+(1.3,0.3)$);
}
\def\EdgeS(#1){
  \path[draw,-,thick,color=blue!70] (#1+) edge[out=90,in=0] ($(#1-)+(0.3,1.3)$);
  \path[draw,-,thick,color=blue!70] (#1-) edge[out=90,in=180] ($(#1-)+(0.3,1.3)$);
}
\def\EdgeN(#1){
  \path[draw,-,thick,color=blue!70] (#1+) edge[out=270,in=0] ($(#1+)-(0.3,1.3)$);
  \path[draw,-,thick,color=blue!70] (#1-) edge[out=270,in=180] ($(#1+)-(0.3,1.3)$);
}
\def\GCircle(#1,#2)(#3,#4){
  \path(#1,#2) node[coordinate] (1) {};
  \path(#3,#4) node[coordinate] (2) {};
  \path[draw,-,thick,color=blue!70] (1) edge[out=90,in=90] (2);
  \path[draw,-,thick,color=blue!70] (2) edge[out=270,in=270] (1);
}
\def\EdgeSign(#1)(#2)#3(#4)#5{
  \node at ($(#1)!#3!(#2) + (#4)$) [color=black, scale=\graphdslabelscale] {$\scriptstyle #5$};
}
\def\GraphEdgeSignDist{0.55}
\def\GraphEdgeSignS(#1)(#2)#3#4{\EdgeSign(#1)(#2)#3(0,-\GraphEdgeSignDist){#4}}
\def\VSwap#1#2#3#4{\path[draw](#1) edge[<->,#3,shorten >=#4pt,shorten <=#4pt] (#2){};}
\def\VArr#1#2#3#4{\path[draw](#1) edge[->,#3,shorten >=#4pt,shorten <=#4pt] (#2){};}
\def\ESwapOfs#1#2#3#4#5#6#7#8{\VSwap{$(#1)!0.5!(#2) + (#6)$}{$(#3)!0.5!(#4) + (#7)$}{#5}{#8}}
\def\EArrOfs#1#2#3#4#5#6#7#8{\VArr{$(#1)!0.5!(#2) + (#6)$}{$(#3)!0.5!(#4) + (#7)$}{#5}{#8}}
\def\tgrGB{\raise-7pt\hbox{\begin{tikzpicture}[scale=\GraphScale]
  \GraphVertices
  \Vertex[x=1.50,y=0.000,L=1]{1};
  \coordinate (2) at (0.000,0.000);
  \BlueEdges
  \LoopW(1)
\GraphEdgeSignS(1)(2){0.5}{n}\end{tikzpicture}}}
\def\tgrGBex{\raise-7pt\hbox{\begin{tikzpicture}[scale=\GraphScale]
  \GraphVertices
  \Vertex[x=1.50,y=0.000,L=1]{1};
  \coordinate (2) at (0.000,0.000);
  \BlueEdges
  \LoopW(1)
\GraphEdgeSignS(1)(2){0.5}{1}\end{tikzpicture}}}
\def\tgrGC{\raise-7pt\hbox{\begin{tikzpicture}[scale=\GraphScale]
  \GraphVertices
  \Vertex[x=1.50,y=0.000,L=1]{1};
  \coordinate (2) at (0.000,0.000);
  \BlueEdges
  \LoopW(1)
\GraphEdgeSignS(1)(2){0.5}{n}\ESwapOfs1212{}{0,-0.25}{0,0.25}{0.5}\end{tikzpicture}}}
\def\tgrGD{\raise-7pt\hbox{\begin{tikzpicture}[scale=\GraphScale]
  \GraphVertices
  \Vertex[x=1.50,y=0.000,L=\relax]{1};
  \coordinate (2) at (3.00,0.000);
  \coordinate (3) at (0.000,0.000);
  \BlueEdges
  \LoopE(1)
  \LoopW(1)
\GraphEdgeSignS(1)(3){0.5}{n}\GraphEdgeSignS(1)(2){0.5}{n}\end{tikzpicture}}}
\def\tgrGE{\raise-7pt\hbox{\begin{tikzpicture}[scale=\GraphScale]
  \GraphVertices
  \Vertex[x=1.50,y=0.000,L=\relax]{1};
  \coordinate (2) at (3.00,0.000);
  \coordinate (3) at (0.000,0.000);
  \BlueEdges
  \LoopE(1)
  \LoopW(1)
\GraphEdgeSignS(1)(3){0.5}{n}\GraphEdgeSignS(1)(2){0.5}{n}\ESwapOfs1212{}{0,-0.25}{0,0.25}{0.5}\end{tikzpicture}}}
\def\tgrGF{\raise-7pt\hbox{\begin{tikzpicture}[scale=\GraphScale]
  \GraphVertices
  \Vertex[x=1.50,y=0.000,L=\relax]{1};
  \coordinate (2) at (3.00,0.000);
  \coordinate (3) at (0.000,0.000);
  \BlueEdges
  \LoopE(1)
  \LoopW(1)
\GraphEdgeSignS(1)(3){0.5}{n}\GraphEdgeSignS(1)(2){0.5}{n}\ESwapOfs1313{}{0,-0.25}{0,0.25}{0.5}\ESwapOfs1212{}{0,-0.25}{0,0.25}{0.5}\end{tikzpicture}}}
\def\tgrGG{\raise-7pt\hbox{\begin{tikzpicture}[scale=\GraphScale]
  \GraphVertices
  \Vertex[x=1.50,y=0.000,L=\relax]{1};
  \coordinate (2) at (3.00,0.000);
  \coordinate (3) at (0.000,0.000);
  \BlueEdges
  \LoopE(1)
  \LoopW(1)
\GraphEdgeSignS(1)(3){0.5}{n}\GraphEdgeSignS(1)(2){0.5}{n}\ESwapOfs1312{in=160,out=20}{0.2,0.3}{-0.2,0.3}{0.5}\end{tikzpicture}}}
\def\tgrGH{\raise-7pt\hbox{\begin{tikzpicture}[scale=\GraphScale]
  \GraphVertices
  \Vertex[x=1.50,y=0.000,L=\relax]{1};
  \coordinate (2) at (3.00,0.000);
  \coordinate (3) at (0.000,0.000);
  \BlueEdges
  \LoopE(1)
  \LoopW(1)
\GraphEdgeSignS(1)(3){0.5}{n}\GraphEdgeSignS(1)(2){0.5}{n}\EArrOfs1312{in=150,out=30}{0.1,0.29}{0,0.35}{0.5}\EArrOfs1213{in=-60,out=-60}{0,0.2}{0.3,-0.25}{0.5}\end{tikzpicture}}}
\def\tgrGA{\raise-3pt\hbox{\begin{tikzpicture}[scale=\GraphScale]
  \GraphVertices
  \Vertex[x=0.000,y=0.000,L=2]{1};
  \BlueEdges
\end{tikzpicture}}}
\newtheorem{theorem}{Theorem}[section]
\newtheorem{corollary}[theorem]{Corollary}
\newtheorem{lemma}[theorem]{Lemma}
\theoremstyle{definition}
\newtheorem{definition}[theorem]{Definition}
\newtheorem{example}[theorem]{Example}
\newtheorem{algorithm}[theorem]{Algorithm}
\newtheorem{fact}[theorem]{Fact}
\newtheorem{notation}[theorem]{Notation}
\newtheorem{remark}[theorem]{Remark}
\tikzset{join/.code=\tikzset{after node path={%
\ifx\tikzchainprevious\pgfutil@empty\else(\tikzchainprevious)%
edge[every join]#1(\tikzchaincurrent)\fi}}}
\tikzset{>=stealth',every on chain/.append style={join},
         every join/.style={->}}
\title{Recovering the cluster picture of a polynomial over a discretely valued field}
\author{Lilybelle Cowland Kellock}
\address{University College London, London WC1H 0AY, UK}
\email{lilybelle.kellock.20@ucl.ac.uk}
\subjclass[2020]{Primary: 11G20, Secondary: 14D10, 14G20, 14H45, 14Q05}
\begin{document}

\begin{abstract}
For $f(x)$ a separable polynomial of degree $d$ over a discretely valued field $K$, we describe how the cluster picture of $f(x)$ over $K$, in other words the set of tuples $\{(\textup{ord}(x_i-x_j),i,j) : 1\leq i< j \leq d \}$ where $x_1,\dots,x_d$ are the roots of $f(x)$,  can be recovered without knowing the roots of $f(x)$ over $\Bar{K}$. We construct an explicit list of polynomials $g_d^{(1)},\dots,g_d^{(t_d)}\in\mathbb{Z}[A_0,\dots,A_{d-1}]$ such that the valuations $\textup{ord}(g_{d}^{(i)}(a_0,\dots,a_{d-1}))$ for $i=1,\dots,t_d$ uniquely determine this set of distances for the polynomial $f(x)=c_f(x^d+a_{d-1}x^{d-1}+\dots+a_0)$, and we describe the process by which they do so. We use this to deduce that if $C:y^2=f(x)$ is a hyperelliptic curve over a local field $K$, this list of valuations of polynomials in the coefficients of $f(x)$ uniquely determines the dual graph of the special fibre of the minimal strict normal crossings model of $C/K^{\textup{unr}}$, the inertia action on the Tate module and the conductor exponent. This provides a hyperelliptic curves analogue to a corollary of Tate's algorithm, that in residue characteristic $p\geq 5$ the dual graph of special fibre of the the minimal regular model of an elliptic curve $E/K^{\textup{unr}}$ is uniquely determined by the valuation of $j_E$ and $\Delta_E$.
\end{abstract}

\maketitle

\setcounter{tocdepth}{1}
\tableofcontents

\section{Introduction}\label{intro}
Let $f(x)$ be a separable polynomial of degree $d$ over a discretely valued field $K$. In this paper, we address the question of how the set of tuples $\{(\textup{ord}(x_i-x_j),i,j) : 1\leq i< j \leq d\}$ up to reordering of the roots $x_1,\dots, x_d$ of $f(x)$, also known as the \textit{cluster picture}, can be recovered from valuations of polynomials in the coefficients of $f(x)$. The results of this paper mean that the configuration of the distances between the roots can be recovered without having to find the roots of $f(x)$ over $\Bar{K}$, which could be defined over large extensions if $d$ is large. The main result we prove is the following theorem, which states that the configuration of the distances between the roots of $f(x)$ can be recovered from a finite list of polynomials in the coefficients of $f(x)$, with this list depending only on the degree of $f(x)$. We describe this list explicitly in Theorem \ref{introthmfromg} below. Throughout, we use $\textup{ord}$ to denote the valuation with respect to a uniformiser of $K$. 

\begin{theorem}[=Theorem \ref{polythm}]\label{top}
There exists a finite and explicit list of polynomials $g_{d}^{(1)},\dots,g_{d}^{(t_d)}\in\mathbb{Z}[A_0,\dots,A_{d-1}]$ for which, if $f(x)=c_f(x^d+a_{d-1}x^{d-1}+\cdots+a_0)$ is a separable polynomial of degree $d$ over a discretely valued field $K$, $\textup{ord}(g_{d}^{(i)}(a_0,\dots,a_{d-1}))$ for $i=1,\dots,t_d$ uniquely determines the set of tuples
\begin{equation}
\{(\textup{ord}(x_i-x_j),i,j) : 1\leq i< j \leq d \},
\end{equation}
up to reordering of the roots $x_1,\dots, x_d$ of $f(x)$.
\end{theorem}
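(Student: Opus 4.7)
My plan is to induct on the degree $d$ and to recover the cluster picture layer by layer, with each layer detected by valuations of suitable resolvent polynomials in the coefficients $a_0,\ldots,a_{d-1}$ of the monic part of $f$.

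For $d=2$ the cluster picture consists of the single distance $\mathrm{ord}(x_1-x_2)$, and $\mathrm{ord}(\mathrm{Disc}(x^2+a_1x+a_0)) = 2\,\mathrm{ord}(x_1-x_2)$ recovers it from the single polynomial $g_2^{(1)}=A_1^2-4A_0$. This is the base case.

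For the inductive step I would build two families of polynomials in $\mathbb{Z}[A_0,\ldots,A_{d-1}]$: first, the coefficients (in the variable $y$) of the $k$-fold sum resolvent $F_k(y)=\prod_{|S|=k}\bigl(y-\sum_{i\in S}x_i\bigr)$; and second, the coefficients of the $k$-fold difference resolvent $G_k(y)=\prod_{\substack{|S|=|T|=k\\S\cap T=\emptyset}}\Bigl(y-\bigl(\sigma_1(S)-\sigma_1(T)\bigr)\Bigr)$, where $\sigma_1(S)=\sum_{i\in S}x_i$. Both are symmetric in the roots and so lie in $\mathbb{Z}[A_0,\ldots,A_{d-1}]$, and the ordinary valuations of these coefficient polynomials read off the Newton polygons of $F_k$ and $G_k$.

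The Newton polygon of $G_1$ already recovers the multiset $\{\mathrm{ord}(x_i-x_j)\}$, which reveals the top-level clustering, namely the partition into proximity classes at each depth threshold. The polygons of $F_k$ and $G_k$ for $k\ge 2$ refine this by recording how many roots lie in each class and at which depths sub-clusters merge. For each top-level cluster $C$ of size $k_C\ge 2$ I would then recurse on the polynomial $f_C$ of degree $k_C$ whose roots are the roots of $f$ lying in $C$, translated so as to have centre close to $0$. Although $f_C$ is not directly accessible from $f$, the valuations of its coefficients that the induction requires are captured by symmetric sums over all size-$k_C$ cluster candidates, which are polynomials in $a_0,\ldots,a_{d-1}$. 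Since the cluster picture has depth at most $d-1$ and admits only finitely many combinatorial types (as a rooted tree), the total list of polynomials so generated is finite and explicit.

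The main obstacle I expect is verifying that the finite collection of valuations genuinely determines the cluster picture, not just the multiset of distances, and that it does so without ambiguity between non-isomorphic cluster pictures. Concretely, for each pair of combinatorially distinct cluster pictures one must pinpoint a polynomial in the list whose valuation separates them. I expect this requires a case analysis indexed by the combinatorial type of the cluster picture, paired with a non-degeneracy argument showing that the resolvent polynomials attached to the correct cluster candidate dominate (in valuation) those attached to incorrect candidates, so that the Newton polygons read off the intended information rather than being spoiled by cross-contributions.
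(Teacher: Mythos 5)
Your approach via resolvent polynomials and Newton polygons is genuinely different from the paper's (which uses weighted auxiliary graphs $G\in\mathbf{G}_d$ and associated rational functions $J_G$ built from products of reciprocal squared root differences), but as written it has a real gap that the paper's construction is specifically engineered to avoid.

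First, the Newton polygon of $G_1(y)=\prod_{i\neq j}\bigl(y-(x_i-x_j)\bigr)$ yields only the \emph{multiset} of pairwise distances, and this does not determine the cluster picture. For instance, in degree $6$ the cluster picture consisting of three twins $\{x_1,x_2\},\{x_3,x_4\},\{x_5,x_6\}$ each at depth $a$ inside an outer cluster of depth $0$, and the cluster picture with a single cluster $\{x_1,x_2,x_3\}$ of depth $a$ and three isolated roots, both produce the multiset $\{a,a,a,0,\ldots,0\}$ with exactly three entries equal to $a$. You acknowledge this concern, but the proposed remedy does not clearly close it. Second, the sum resolvents $F_k$ are not cluster-picture invariants at all: $\textup{ord}(x_i+x_j)$ changes under a simultaneous translation $x_i\mapsto x_i+t$ while the cluster picture is translation-invariant, so the Newton polygon of $F_k$ cannot be read off from the cluster picture and it is not justified that it ``records how many roots lie in each class and at which depths sub-clusters merge.'' Third, the higher difference resolvents $G_k$ for $k\geq 2$ suffer from the same defect in a subtler way: $\sigma_1(S)-\sigma_1(T)$ is an \emph{additive} combination of differences such as $(x_1-x_2)+(x_3-x_4)$, which can exhibit cancellation that is not controlled by the ultrametric data. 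Its valuation is therefore only bounded below, not determined, by the cluster picture. The paper's rational functions $J_G$ avoid this by design: each summand $S_G^\sigma$ is a pure \emph{product} $\prod (x_i-x_j)^{-2w}$, so $\textup{ord}(S_G^\sigma)$ is an exact linear combination of the distances $\textup{ord}(x_i-x_j)$ with integer weights (Lemma~\ref{sweights}), and then the rearrangement inequality (Fact~\ref{weights}) identifies the dominant summand. Finally, the recursion you propose — forming ``symmetric sums over all size-$k_C$ cluster candidates'' to emulate the subpolynomial $f_C$ — faces exactly the domination problem you flag at the end and is left unargued; one has to prove that the contribution of the true cluster dominates the contributions of all the spurious size-$k_C$ subsets, which is precisely the content of the paper's Lemmas~\ref{samenoedges} and~\ref{agraphs}.
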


Knowing the configuration of the distances between the roots of a polynomial $f(x)$ over a discretely valued field is of significant importance to the study of elliptic and hyperelliptic curves. For instance, if $E:y^2=x^3+ax+b$ is an elliptic curve over a local field $K$ of residue characteristic $\geq 5$, there are two possibilities for the configuration of the roots of the cubic, and this tells us the reduction type of the curve. In some labelling of the roots $x_1$, $x_2$ and $x_3$, either 
\begin{enumerate}[(i)]
\item  $\textup{ord}(x_1-x_2)=\textup{ord}(x_1-x_3)=\textup{ord}(x_2-x_3)=d$ for some $d\in\mathbb{Q}$, or
\item  $\textup{ord}(x_1-x_2)=d_1$ and $\textup{ord}(x_1-x_3)=\textup{ord}(x_2-x_3)=d_2$ for some $d_1,d_2\in\mathbb{Q}$ with $d_1>d_2$,
\end{enumerate}
and $E/K$ has potentially good reduction if and only if $E/K$ has root configuration $(i)$. Further to this, we can read off the Kodaira type of the curve from the root configuration using Tate's algorithm (see, for example, \cite{omrisarah} Example 1.13). More generally, for a hyperelliptic curve given by a Weierstrass equation $C:y^2=f(x)$ over a discretely valued field $K$, extensive work has been undertaken on recovering important arithmetic information such as reduction types from the configuration of the differences of roots, the methodology for which was introduced in \cite{m2d2}. The goal of this paper is to provide a method for recovering the configuration of the differences of roots of a polynomial $f(x)$ from polynomials in the coefficients of $f(x)$, thus giving an analogue to a corollary of Tate's algorithm in the setting of hyperelliptic curves, that for an elliptic curve $E$ over a local field $K$ of residue characteristic $\geq 5$, one can obtain the Kodaira type of $E/K$ from the coefficients of a Weierstrass equation for $E$ using the valuation of $j_E$ and $\Delta_E$. We state the results of this paper pertaining to hyperelliptic curves in \S\ref{hyperellipticapplications}. 

In the following theorem we explicitly describe the polynomials from Theorem \ref{top} that recover the configuration of the distances between the roots. The polynomials are described using rational functions in the roots of $f(x)$ that are associated to weighted graphs on $\deg(f)$ vertices called \textit{auxiliary graphs} (see Definition \ref{boldg}). The rational functions in the roots of $f(x)$ are defined in terms of differences of roots so that their valuations can be related to the distances between the roots (see \S\ref{valuationssection}). They are quotients of polynomials that are symmetric in the roots of $f(x)$ so they can be written in terms of the coefficients of $f(x)$, and thus the roots of $f(x)$ over $\Bar{K}$ do not need to be known a priori in order to evaluate them.

\begin{theorem}[See Theorem \ref{firstalgorithm}]\label{introthmfromg}
Let $f(x)$ be a separable polynomial of degree $d$ over a discretely valued field $K$. The valuations $\textup{ord}(J_{G,f})$ for every $G\in \mathbf{G}_d$ (see Definitions \ref{invariantsintro} and \ref{boldg}) uniquely determine the set of tuples
\begin{equation}
\{(\textup{ord}(x_i-x_j),i,j) : 1\leq i<j \leq d\},
\end{equation}
up to reordering of the roots $x_1,\dots, x_d$ of $f(x)$.
\end{theorem}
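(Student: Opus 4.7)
The plan is to exhibit an explicit reconstruction procedure which, given the valuations $\textup{ord}(J_{G,f})$ for $G$ ranging over $\mathbf{G}_d$, outputs the cluster picture of $f(x)$ up to relabelling of the roots. I would first encode a cluster picture on $d$ labelled roots as a rooted tree whose leaves carry the labels $1,\dots,d$ and whose internal vertices carry rational depths; in these terms, $\textup{ord}(x_i - x_j)$ is precisely the depth of the least common ancestor of leaves $i$ and $j$, so reconstructing the cluster picture is equivalent to reconstructing this decorated tree up to the ambient $S_d$-action.

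The next step is to derive, for each auxiliary graph $G\in\mathbf{G}_d$, a closed formula for $\textup{ord}(J_{G,f})$ in terms of this decorated tree. Since the numerator of $J_{G,f}$ is a product of differences $x_i - x_j$ with multiplicities prescribed by the edges of $G$, and the denominator is a symmetric expression of the same overall shape, the formula should reduce to a weighted sum of edge multiplicities of $G$ against depths of least common ancestors, suitably symmetrised over the action of $S_d$. This reduces the theorem to the combinatorial statement that the resulting map from equivalence classes of decorated cluster trees on $d$ leaves to tuples in $\mathbb{Q}^{|\mathbf{G}_d|}$ is injective. I would then perform the reconstruction by descent on the tree: certain $J_{G,f}$ whose underlying graphs are small or have $K_d$-like symmetry deliver the depth of the top cluster, namely $\min_{i<j}\textup{ord}(x_i - x_j)$; others, whose combinatorial structure isolates particular subsets of roots, reveal how the roots partition into maximal proper sub-clusters; and the restriction of each auxiliary graph to such a sub-cluster yields a smaller auxiliary graph whose $J$-invariant valuation is recoverable from the data, so an induction on $d$ finishes the reconstruction.

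The main obstacle I anticipate lies in the separation step that underpins this induction. Because $J_{G,f}$ is symmetric in the roots, its valuation carries only the multiset of edge-depths seen by $G$, not their labelling, so one must verify that $\mathbf{G}_d$ is rich enough that two genuinely non-isomorphic decorated cluster trees always yield distinct valuation tuples. Proving this separation amounts to a finite case analysis over the combinatorial types of cluster picture in degree $d$, coupled with an explicit exhibition of auxiliary graphs whose automorphism-refined valuations distinguish each potentially confusing pair; ensuring that such graphs indeed lie in the prescribed family $\mathbf{G}_d$ — rather than in some larger ad-hoc collection — is where the bulk of the work is likely to be.
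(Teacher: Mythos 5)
Your proposal captures the broad idea that the valuations $\textup{ord}(J_{G,f})$ encode weighted sums of the pairwise distances $\textup{ord}(x_i-x_j)$, and that one should reconstruct the cluster picture by some sort of induction. However, both of the structural pillars of your plan point in the wrong direction, and the step you flag as the hard one is left as an unsubstantiated claim rather than a proof.

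First, the top-down descent does not get off the ground. You propose that small graphs $G$ deliver the depth of the top cluster, i.e.\ $\min_{i<j}\textup{ord}(x_i-x_j)$. This is backwards. The valuation of $J_{G,f}=\sum_\sigma S^{\sigma}_{G,f}$ is governed by the summand of minimal valuation, and by the rearrangement inequality (Fact~\ref{weights} in the paper) this is the summand that allocates the edge weights of $G$ to the \emph{largest} distances. Thus the single-edge graph yields $-2\max_{i<j}\textup{ord}(x_i-x_j)=-2d_1(f)$, the depth of the \emph{deepest} pair, not the top cluster. The paper exploits exactly this and reconstructs the cluster picture from the deepest clusters outward, inducting on the depth-index $n$ of the auxiliary graphs $G_n(f)$ while keeping $d$ fixed; $G_1(f)$ records the deepest pairs, $G_{k_f}(f)$ is the complete graph, and the top cluster's depth $d_{k_f}(f)$ is the \emph{last} quantity recovered.

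Second, the recursion on $d$ is not justified and is unlikely to work. You assert that the restriction of each auxiliary graph to a maximal proper sub-cluster yields a smaller auxiliary graph ``whose $J$-invariant valuation is recoverable from the data.'' But $J_{G,f}$ is a sum over $S_d/\textup{Stab}_{S_d}(G)$ of products over all edges of $\sigma(G)$, and these permutations freely mix roots from different sub-clusters. There is no given mechanism to extract $\textup{ord}(J_{G',f'})$, where $f'$ is the monic polynomial whose roots are exactly those of one sub-cluster and $G'$ is a graph on fewer vertices, from the global valuations $\textup{ord}(J_{G,f})$ with $G\in\mathbf{G}_d$. This is a genuine gap, not a detail to be filled in.

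Third, and most importantly, you defer the injectivity statement --- that distinct decorated cluster trees yield distinct valuation tuples --- to a ``finite case analysis over the combinatorial types of cluster picture in degree $d$.'' That is where the actual content of the theorem lies, and a promise of a case analysis is not a proof; nor is it clear such an analysis would even be finite once the rational depths are taken into account. The paper avoids any case analysis with a uniform argument: it defines the averaging function $A_{n+1}(-,f)$ (Definition~\ref{average}), proves via the rearrangement inequality that $G_{n+1}(f)$ is the unique graph in $\mathcal{G}_{n+1}(f)$ with the most edges among those maximising $A_{n+1}(-,f)$, and that $A_{n+1}(G_{n+1}(f),f)=d_{n+1}(f)$ (Lemmas~\ref{adn}--\ref{agraphs}, Theorem~\ref{main}). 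The theorem then follows from Lemma~\ref{clusterpicgraphs}, which says the final auxiliary graph together with the recovered depths determines the cluster picture. You would need to supply something playing the role of this averaging/maximisation argument for your approach to become a proof.
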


Theorem \ref{introthmfromg} is a simplified version of Theorem \ref{introthm} below, which explicitly describes how to recover the set of tuples from the valuations.

\begin{definition}\label{invariantsintro}\label{invariants}
Let $G=(V,E,w)$ be a weighted graph, where $w:E\rightarrow \mathbb{Z}_{\geq 0}$. Fix a labelling of the vertices $V=\{v_1,\dots,v_d\}$ of $G$ corresponding to the variables $X_1,\dots,X_d$, thus considering $G$ as a labelled graph. There is a natural action of $\sigma\in S_d$ on $G$ via the action on the vertices. This is given explicitly by letting
\begin{equation}
    \sigma(G)=(V,\sigma(E),\sigma(w)),
\end{equation}
where $\sigma(E)=\{v_{\sigma(i)}v_{\sigma(j)}:v_iv_j\in E\}$ and $\sigma(w)(v_{\sigma(i)}v_{\sigma(j)})=w(v_iv_j)$. Let $\sigma\in S_d/\text{Stab}_{S_d}(G)$ under this action. We define
\begin{equation}
    S_{G}^{\sigma}(X_1,\dots,X_d)=\frac{1}{\displaystyle \prod_{v_iv_j\in \sigma(E)}(X_i-X_j)^{2\sigma(w)(v_iv_j)}}.
\end{equation}
To $G$, associate the rational function $J_{G}=\sum_{\sigma} S_{G}^{\sigma}(X_1,\dots,X_d)$, where the sum is taken over all $\sigma\in S_d/\text{Stab}_{S_d}(G)$. For a separable polynomial with a fixed labelling of the roots $f(x)=(x-x_1)\cdots(x-x_d)$, write $S_{G,f}^\sigma$ and $J_{G,f}$ for $S_G^\sigma(x_1,\dots,x_d)$ and $J_G(x_1,\dots,x_d)$ respectively, and note that $J_{G}$ and $J_{G,f}$ do not depend on the labelling of the vertices of $G$ or the roots of $f(x)$.
\end{definition}

\begin{example}\label{invgraphs}
For the complete graph on $d$ vertices $K_d$ where each edge has weight $1$, we have $J_{K_d}=1/\Delta$, where $\Delta=\prod_{i<j}(X_i-X_j)^2$. In general, the rational functions will all look like a symmetric polynomial in $X_1,\dots,X_d$ divided by a power of $\Delta$. For example, for the graph $K$ on $3$ vertices below its associated rational function is as follows, with $\Delta=(X_1-X_2)^2(X_1-X_3)^2(X_2-X_3)^2$.

\begin{minipage}{.1\textwidth}
\begin{figure}[H]
		\begin{center}
			
			  \begin{tikzpicture}
				[scale=0.5,auto=left,every node/.style={circle,fill=black!20,scale=0.7}]
				\node (n1) at (0,0)  {};
				\node (n2) at (2,0)  {};
				\node[label={[label distance=0.15cm, scale=1.25]180:$K$}] (n3) at (1,2)  {};
				
				\draw (n1)-- (n2)  node [text=black,pos=0.5, above,fill=none] {$2$};
				\draw (n1) -- (n3) node [text=black,pos=0.5, left,fill=none] {$1$};
				\draw (n2) -- (n3) node [text=black,pos=0.5, right,fill=none] {$1$};
			\end{tikzpicture}
		\end{center}
\end{figure}
\end{minipage}\hspace{20pt}\begin{minipage}{.5\textwidth}
		\begin{equation}
		   J_{K}=\frac{(X_3-X_2)^2(X_3-X_1)^2+(X_3-X_2)^2(X_2-X_1)^2+(X_1-X_2)^2(X_3-X_1)^2}{\Delta^2}
		\end{equation}
		\end{minipage}
\end{example}

\begin{definition}\label{boldg}
Define $\mathbf{G}_d$ to be the set of all weighted graphs on $d$ vertices $G=(V,E,w)$, for which:
    \begin{enumerate}[(a)]
        \item $w:E\twoheadrightarrow \{1,\dots,n\}$ for some $n\in\mathbb{Z}^+$; 
        \item If $1\leq k\leq n$ and all edges of weight $\leq k$ are removed, the remaining graph is a disjoint union of complete graphs. Equivalently, allocating the edges not in $E$ weight $0$, for $v_1,v_2,v_3\in V$, $w(v_1v_2)\geq \min(w(v_2v_3),w(v_1v_3))$.
    \end{enumerate}
We call graphs in $\mathbf{G}_d$ \textit{auxiliary graphs} on $d$ vertices.
\end{definition}

In Example \ref{ellipticcurvesintro} below, we write down two of the rational functions associated to graphs in $\mathbf{G}_3$, and use these to demonstrate how the rational functions from Theorem \ref{introthmfromg} can be used to recover the configuration of the distances between the roots of a cubic. The methods of this paper generalise the phenomenon outlined in this example to recovering more complicated root configurations for higher degree polynomials.

\begin{example}\label{ellipticcurvesintro}
The graphs in $\mathbf{G}_3$ are:
\begin{center}
\begin{minipage}{0.2\textwidth}
\begin{center}
    \begin{tikzpicture}
				[scale=0.5,auto=left,every node/.style={circle,fill=black!20,scale=0.7}]
				\node (n1) at (0,0)  {};
				\node (n2) at (2,0)  {};
				\node[label={[label distance=0.15cm, scale=1.25]180:$G$}] (n3) at (1,2)  {};
				
				\draw (n1) -- (n2) node [text=black,pos=0.5, above,fill=none] {$1$};
				\draw (n1) -- (n3)node [text=black,pos=0.5, left,fill=none] {$1$};
				\draw (n2) -- (n3)node [text=black,pos=0.5, right,fill=none] {$1$};
			\end{tikzpicture}
\end{center}
\end{minipage}\begin{minipage}{0.2\textwidth}
\begin{center}
\begin{tikzpicture}
				[scale=0.5,auto=left,every node/.style={circle,fill=black!20,scale=0.7}]
				\node (n1) at (0,0)  {};
				\node (n2) at (2,0)  {};
				\node[label={[label distance=0.15cm, scale=1.25]180:$H$}] (n3) at (1,2)  {};

				\draw (n1)-- (n2) node [text=black,pos=0.5, above,fill=none] {$1$};
			\end{tikzpicture}
\end{center}
\end{minipage}\begin{minipage}{0.2\textwidth}
\begin{center}
    \begin{tikzpicture}
				[scale=0.5,auto=left,every node/.style={circle,fill=black!20,scale=0.7}]
				\node (n1) at (0,0)  {};
				\node (n2) at (2,0)  {};
				\node[label={[label distance=0.15cm, scale=1.25]180:$K$}] (n3) at (1,2)  {};
				
				\draw (n1)-- (n2)  node [text=black,pos=0.5, above,fill=none] {$2$};
				\draw (n1) -- (n3) node [text=black,pos=0.5, left,fill=none] {$1$};
				\draw (n2) -- (n3) node [text=black,pos=0.5, right,fill=none] {$1$};
			\end{tikzpicture}
\end{center}
\end{minipage}
\end{center}
If $E:y^2=x^3+ax+b=f(x)$ is an elliptic curve over a discretely valued field, there are two possibilities for the configuration of the roots. In some labelling of the roots $x_1$, $x_2$ and $x_3$ of $f(x)$, either
\begin{enumerate}[(i)]
\item  $\textup{ord}(x_1-x_2)=\textup{ord}(x_1-x_3)=\textup{ord}(x_2-x_3)=d$ for some $d\in\mathbb{Q}$, or
\item  $\textup{ord}(x_1-x_2)=d_1$ and $\textup{ord}(x_1-x_3)=\textup{ord}(x_2-x_3)=d_2$ for some $d_1,d_2\in\mathbb{Q}$ with $d_1>d_2$. 
\end{enumerate}
The following table gives the rational function associated to the graphs with weight $1$ in $\mathbf{G}_3$ when evaluated on the roots of $f(x)$, and the valuation of these rational functions when evaluated on elliptic curves with root configurations $(i)$ and $(ii)$. We omit $J_{K,f}$ since $J_{K,f}=J_{G,f}\cdot J_{H,f}$ so its valuation does not give us any further information. 

\setlength\extrarowheight{2pt}
\begin{center}
{\setlength{\LTleft}{0.2in}
 \setlength{\LTright}{0in}
\begin{longtable}{ |>{\centering\arraybackslash} m{1.7cm} >{\centering\arraybackslash} m{5.5cm} | >{\centering\arraybackslash} m{3.1cm} | >{\centering\arraybackslash} m{3.4cm}|}

\cline{3-4} 

\multicolumn{2}{c}{} & \multicolumn{2}{|c|}{Valuation of $J_{-,f}$ in case $(i)$ and $(ii)$} \\
\hline 

Graph & Rational function $J_{-,f}$ & \multicolumn{1}{>{\raggedleft\arraybackslash} m{3.1cm}|}{$(i)$ $\textup{ord}(x_1-x_2)=d$ $\textup{ord}(x_1-x_3)=d$ $\textup{ord}(x_2-x_3)=d$} & \multicolumn{1}{>{\raggedleft\arraybackslash} m{3.4cm}|}{$(ii)$ $\textup{ord}(x_1-x_2)=d_1$ $\textup{ord}(x_1-x_3)=d_2$ $\textup{ord}(x_2-x_3)=d_2$} \\

\hline

 \begin{tikzpicture}
				[scale=0.5,auto=left,every node/.style={circle,fill=black!20,scale=0.7}]
     \node[fill=white] (n5) at (1,2.5) {};
				\node (n1) at (0,0)  {};
				\node (n2) at (2,0)  {};
    \node[fill=white] (n6) at (1,-0.5) {};
				\node[label={[label distance=0.15cm, scale=1.25]180:$G$}]  (n3) at (1,2)  {};
				
				\draw (n1) -- (n2) node [text=black,pos=0.5, above,fill=none] {$1$};
				\draw (n1) -- (n3)node [text=black,pos=0.5, left,fill=none] {$1$};
				\draw (n2) -- (n3)node [text=black,pos=0.5, right,fill=none] {$1$};
			\end{tikzpicture} & $\frac{1}{(x_1-x_2)^2(x_1-x_3)^2(x_2-x_3)^2}$ & $-6d$ & $-(4d_2+2d_1)$ \\

 \begin{tikzpicture}
				[scale=0.5,auto=left,every node/.style={circle,fill=black!20,scale=0.7}]
    \node[fill=white] (n5) at (1,2.5) {}; 
				\node (n1) at (0,0)  {};
				\node (n2) at (2,0)  {};
    \node[fill=white] (n6) at (1,-0.5) {};
				\node[label={[label distance=0.15cm, scale=1.25]180:$H$}] (n3) at (1,2)  {};
				
				\draw (n1)-- (n2) node [text=black,pos=0.5, above,fill=none] {$1$};
			\end{tikzpicture}  & 
   $\frac{1}{(x_1-x_2)^2} +\frac{1}{(x_1-x_3)^2} +\frac{1}{(x_2-x_3)^2}$ & $\geq -2d$ & $-2d_1$ \\
\hline 
\end{longtable}}
\end{center}
We can use these rational functions $J_{G,f}$ and $J_{H,f}$ to distinguish between case $(i)$ and $(ii)$, since, using the fact that $d_1>d_2$, the cubic has root configuration $(ii)$ if and only if $\frac{1}{6}\textup{ord}(J_{G,f})> \frac{1}{2}\textup{ord}(J_{H,f})$. If we are in case $(i)$ then $d=-\frac{1}{6}\textup{ord}(J_{G,f})$, and if we are in case $(ii)$ then $d_1=-\frac{1}{2}\textup{ord}(J_{H,f})$. In case $(ii)$, we have $d_2=\frac{1}{2}(-\textup{ord}(J_{G})-4b)$.
\end{example} 

\begin{remark}
If we write $J_{G,f}$ and $J_{H,f}$ from Example \ref{ellipticcurvesintro} above in terms of $a$ and $b$, we obtain the result that $E$ has root configuration $(ii)$ if and only if $\frac{1}{6}\textup{ord}(\frac{1}{4a^3+27b^2})>\frac{1}{2}\textup{ord}\left(\frac{3^2a^2}{4a^3+27b^2}\right)$, which is equivalent to $\textup{ord}(j_E)<0$ when $K$ has residue characteristic $\neq 2, 3$. Since root configuration $(ii)$ is equivalent to $E/K$ having potentially multiplicative reduction (see, for example, \cite{omrisarah} Example 1.13), we recover the result that $E/K$ has potentially multiplicative reduction if and only if $\textup{ord}(j_E)<0$. 
\end{remark}

In the following theorem, we give a procedure that recovers the cluster picture from the list of rational functions in the roots of $f(x)$ associated to the weighted graphs in $\mathbf{G}_d$. For the definition of $G_n(f)$, $G_n'(f)$, $d_n(f)$ and $e_n(f)$ see Definitions \ref{introdefs1} and \ref{introdefs2} below. We explicitly write out the full algorithm for degree $5$ polynomials in \S\ref{degree5section}, where we give a table listing the graphs in $\mathbf{G}_5$, their associated rational functions and a description of how the configuration of the distances between the roots can be recovered from these. 

\begin{theorem}[=Theorem \ref{firstalgorithm}]\label{introthm}
Let $f(x)$ be a separable polynomial of degree $d$ over a discretely valued field $K$. 
\begin{enumerate}[(i)]
\item Given $G_{n}(f)$ and $d_1(f),\dots,d_{n}(f)$, let 
\begin{align}
    \mathcal{G}_{n+1}(f)=\{H\in\mathbf{G}_d:G_n'(f)=(V,E_n,w_n')\subsetneq H, &\text{ if } v_iv_j\not\in E_n \text{ then } w(v_iv_j)\in\{0,1\} \text{ and }\\
    & \text{ if } w(v_iv_j)\geq 1 \text{ and } w(v_jv_k)\geq 1 \text{ then } w(v_iv_k)\geq 1\},
\end{align}
and for $G\in\mathcal{G}_{n+1}(f)$ let 
\begin{equation}
            A_{n}(G,f)=\frac{-\textup{ord}(J_{G,f})-2\sum_{l=1}^{n}e_l(f)(n+2-l)d_l(f)}{2(|E_G|-|E_{G_{n}(f)}|)}.
        \end{equation}
Out of the graphs in $\mathcal{G}_{n+1}(f)$, let $G$ be the graph with the most edges satisfying $A_{n+1}(G,f)=\underset{H\in\mathcal{G}_{n+1}(f)}{\emph{\text{max}}} A_{n+1}(H,f)$. Then 
\begin{equation}
    G_{n+1}(f)=G \quad \textup{and} \quad d_{n+1}(f)=A_{n+1}(G,f). 
\end{equation}

\item Given $G_{n+1}(f)$ and $d_1(f),\dots, d_{n+1}(f)$, where $G_{n+1}(f)$ is the complete graph on $d$ vertices, fix a labelling of the vertices $v_1,\dots,v_d$ of $G_{n+1}(f)$. Then there is a labelling of the roots $x_1,\dots, x_d$ of $f(x)$ such that $\textup{ord}(x_i-x_j)=d_k(f)$ if and only if $w(v_iv_j)=n+2-k$ in $G_{n+1}(f)$. In particular, the set of tuples
\begin{equation}
\{(\textup{ord}(x_i-x_j),i,j) : 1\leq i<j \leq d \},
\end{equation}
up to reordering of the roots $x_1,\dots, x_d$, is uniquely determined by $G_{n+1}(f)$ and $d_1(f),\dots, d_{n+1}(f)$. 
\end{enumerate}
    
\end{theorem}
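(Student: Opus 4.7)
The plan is to proceed by induction on $n$, exploiting the fact that $J_{G,f}$ is a sum $\sum_\sigma S_{G,f}^\sigma$ of terms whose individual valuations are completely determined by the cluster picture of $f$ together with the weighting of $G$. First I would record the key computation
$$-\textup{ord}(S_{G,f}^\sigma) \;=\; \sum_{v_iv_j \in E} 2w(v_iv_j)\,\textup{ord}(x_{\sigma(i)} - x_{\sigma(j)}),$$
so that by the ultrametric inequality $-\textup{ord}(J_{G,f})$ is bounded above by the maximum of these quantities over $\sigma \in S_d/\textup{Stab}_{S_d}(G)$, with equality whenever the maximum is uniquely attained (modulo stabiliser). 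The axioms on $\mathbf{G}_d$ (item (b) of Definition \ref{boldg}, that removing small-weight edges leaves a disjoint union of complete graphs) are designed to mirror the ultrametric nature of the cluster picture, so the maximum is attained precisely by those $\sigma$ that align the weight filtration on $G$ with the distance filtration on the roots.

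Given this, the inductive step of part (i) proceeds as follows. Assume $G_n(f)$ together with $d_1(f),\ldots,d_n(f)$ is already known. For any $H \in \mathcal{G}_{n+1}(f)$, the edges of $H$ split into those already present in $G_n(f)$, whose contribution to $-\textup{ord}(J_{H,f})$ is computable from the known data and accounted for by the correction term $2\sum_{l=1}^n e_l(f)(n+2-l)d_l(f)$, and those newly added at level $n+1$, which are all of weight $1$ and each contribute $2\,\textup{ord}(x_i - x_j)$. Since a new edge $v_iv_j$ must satisfy $\textup{ord}(x_i - x_j) \leq d_{n+1}(f)$ (else it would already belong to $G_n(f)$), the quotient $A_{n+1}(H,f)$ is an average of these new distances, hence at most $d_{n+1}(f)$, with equality exactly when every new edge of $H$ corresponds to a pair of roots at distance precisely $d_{n+1}(f)$. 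Among the graphs attaining the maximum, $G_{n+1}(f)$ is characterised as the one with the most such edges.

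For part (ii) the argument is essentially bookkeeping. Once $G_{n+1}(f)$ is the complete graph on $d$ vertices, every pair of roots $(x_i,x_j)$ has been assigned an edge weight by the recursive construction, and by the inductive characterisation above the weight $n+2-k$ records the level $k$ at which that edge was added, which is precisely the level at which $\textup{ord}(x_i - x_j) = d_k(f)$. Reading off the weights therefore reconstructs the cluster picture up to relabelling the roots.

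The hard part will be the first step: verifying carefully that the maximum valuation $\max_\sigma\bigl(-\textup{ord}(S_{G,f}^\sigma)\bigr)$ equals $-\textup{ord}(J_{G,f})$, i.e.\ that the cosets achieving the maximum do not collectively cancel in $J_{G,f}$. This requires showing that the maximising cosets $\sigma\,\textup{Stab}_{S_d}(G)$ correspond exactly to symmetries of the cluster picture compatible with the weight structure of $G$, so that the leading terms of the corresponding $S_{G,f}^\sigma$ all have the same sign (they are all reciprocals of squares times a unit). A secondary subtle point is the tie-breaking by edge count in part (i): one must verify that among graphs attaining $A_{n+1}(H,f) = d_{n+1}(f)$, the one with maximum edges is well-defined up to the $S_d$-action and really records the full set of pairs at distance exactly $d_{n+1}(f)$, which should follow from the ultrametric structure forcing any spurious extra edge to decrease $A_{n+1}$ strictly.
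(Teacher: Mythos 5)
Your high-level strategy --- induction using Fact \ref{weights} (the rearrangement inequality) and the ultrametric inequality, tracking the auxiliary graph $G_n(f)$ level by level --- is the same as the paper's. However, what you single out as ``the hard part'' is a misdirected worry: you never actually need to show $-\textup{ord}(J_{G,f}) = \max_\sigma\bigl(-\textup{ord}(S_{G,f}^\sigma)\bigr)$ for a general graph $G$. Equality is required only for $G = G_{n+1}(f)$ itself, and there the paper's Lemma \ref{adn} observes that the minimising coset is \emph{unique}: $G_{n+1}(f)$ carries exactly $e_l(f)$ edges of weight $n+2-l$ matching the $e_l(f)$ root-pairs at distance $d_l(f)$, so by Fact \ref{weights} the identity allocation strictly dominates every other, and no cancellation is possible. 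For every other $H\in\mathcal{G}_{n+1}(f)$ you only ever need the one-sided bound $\textup{ord}(J_{H,f})\geq\min_\sigma\textup{ord}(S_{H,f}^\sigma)$; any cancellation only raises $\textup{ord}(J_{H,f})$, which helps rather than hurts the inequality you are trying to prove.

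The genuine gap is in the sentence ``a new edge $v_iv_j$ must satisfy $\textup{ord}(x_i-x_j)\leq d_{n+1}(f)$ (else it would already belong to $G_n(f)$).'' This conflates the graph-theoretic membership of an edge in $H$ with the choice of allocation: the summands of $J_{H,f}$ range over all cosets $\sigma\in S_d/\textup{Stab}_{S_d}(H)$, and a summand may perfectly well allocate a weight-$1$ edge to a pair of roots at distance greater than $d_{n+1}(f)$. The correct argument (Lemma \ref{agraphs} of the paper) applies Fact \ref{weights} to \emph{every} $\sigma$: since the weights of $G_n'(f)$ and the large depths $d_1(f),\dots,d_n(f)$ have matching multiplicities $e_1(f),\dots,e_n(f)$, the optimal rearrangement necessarily sends every weight-$1$ edge to a distance at most $d_{n+1}(f)$, giving $-\textup{ord}(S_{H,f}^\sigma)\leq 2\sum_{l\leq n}e_l(f)(n+2-l)d_l(f)+2\bigl(|E_H|-|E_{G_n(f)}|\bigr)d_{n+1}(f)$ uniformly in $\sigma$. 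The strict inequality needed for the tie-break when $|E_H|\geq |E_{G_{n+1}(f)}|$ and $H\neq G_{n+1}(f)$ then requires a further counting step (Lemma \ref{samenoedges} and the second half of Lemma \ref{agraphs}): one must show that no allocation can place every weight-$1$ edge of $H$ on a pair at distance exactly $d_{n+1}(f)$ without forcing $H=G_{n+1}(f)$. Your sketch acknowledges this subtlety but does not supply the argument; you should carry it out explicitly.
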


\begin{definition}[See Definition \ref{dnf}]\label{introdefs1}
Let $f(x)$ be a separable polynomial over a discretely valued field $K$ and let $x_1,\dots, x_d$ be the roots of $f(x)$ in $\Bar{K}$. Define $d_n(f)$ to be the $n$-th largest valuation in the set $\{\textup{ord}(x_i-x_j): i<j\}$, and define $k_f$ to be the number of distinct valuations in this set. Define $e_n(f)=\#\{\{x_i,x_j\}: i<j \textup{ and } \textup{ord}(x_i-x_j)=d_n(f)\}$.
\end{definition}

\begin{definition}[See Definitions \ref{aux} and \ref{curlyg}]\label{introdefs2}
Let $f(x)$ be a separable polynomial over a discretely valued field $K$ and fix a labelling $x_1,\dots, x_d$ of the roots of $f(x)$ in $\Bar{K}$. For $1\leq n\leq k_f$, define $G_n(f)=(V,E_n,w_n)$, where 
\begin{enumerate}[(i)]
\item $V=\{v_1,\dots,v_d\}$ and $E_n=\{v_iv_j:\textup{ord}(x_i-x_j)\geq d_n(f) \}$;
\item $w_n(v_iv_j)=n+1-m$ if $d_{m}(f)=\textup{ord}(x_i-x_j)$ for $m=1,\dots, n$.
\end{enumerate}
We consider $G_{n}(f)$ as an unlabelled graph and call it the \textit{$n$-th auxiliary graph} associated to $f(x)$. Define $G_{f,0}$ to be the empty graph on $d$ vertices and $G_n'(f)=(V,E_n,w_n')$, where $w_n'(v_iv_j)=n+2-m$ if $d_{m}(f)=\textup{ord}(x_i-x_j)$ for $m=1,\dots, n$. That is, $G_n'(f)$ is the $n$-th auxiliary graph associated to a cluster picture but with $1$ added to the weight of each of the edges.
\end{definition}

See Example \ref{2,3} for an example displaying the auxiliary graphs associated to a polynomial.

\subsection{Applications to hyperelliptic curves}\label{hyperellipticapplications}
Cluster pictures are defined for polynomials over discretely valued fields, but they are now a classical approach to studying the arithmetic of hyperelliptic curves over discretely valued fields. For a hyperelliptic curve $C:y^2=f(x)$ over a local field $K$, the cluster picture of $f(x)$ over $K$ can be used to calculate the curve's semistable model, conductor, minimal discriminant, Galois representation, Tamagawa number, root number, differential and more, and there is an exposition on how this can be done in \cite{usersguide}. In particular, there is a description of how the dual graph of the special fibre of the minimal strict normal crossings model, inertia action on the Tate module and conductor exponent can be obtained from the cluster picture and the valuation of the leading coefficient of $f(x)$ (see \cite{omrisarah} Theorem 1.2 and \cite{m2d2} Theorems 10.1 and 11.3). Combining these results with Theorem \ref{introthmfromg} above, we obtain the following theorem, which generalises the fact that when $K$ has residue characteristic $\geq 5$ one can obtain the Kodaira type of an elliptic curve $E/K^{\textup{unr}}$ from the valuation of $j_E$ and $\Delta_E$ (see, for example, \cite{advancedsilverman}). 

\begin{theorem}\label{hyperellipticinto}
Let $C:y^2=f(x)=c_f(x^d+a_{d-1}x^{d-1}+\cdots+a_0)$ be a hyperelliptic curve over a discretely valued field $K$. If $K$ is complete and has odd residue characteristic and $C/K$ has tame reduction, the valuations $\textup{ord}(J_{G,f})$ for all $G\in\mathbf{G}_d$ and $\textup{ord}(c_f)$ uniquely determine: 
\begin{enumerate}[(i)]
    \item The dual graph, with genus and multiplicity, of the special fibre of the minimal strict normal crossings model of $C/K^{\textup{unr}}$ if $C$ has genus $\geq 2$.
    \item The action of inertia on $V_\ell(\textup{Jac }C)=T_\ell(\textup{Jac }C)\otimes \mathbb{Q}_\ell$, where $T_\ell(\textup{Jac }C)$ is the $\ell$-adic Tate module of $\textup{Jac }C$ if $K$ is a local field. 
    \item The conductor exponent of $\textup{Jac } C$ if $K$ is a local field.
\end{enumerate}
\end{theorem}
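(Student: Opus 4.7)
The plan is to deduce Theorem \ref{hyperellipticinto} as a direct corollary of Theorem \ref{introthmfromg} combined with the existing cluster-picture dictionary for tame hyperelliptic curves. The idea is that the input data of the theorem translates, via Theorem \ref{introthmfromg}, into the cluster picture of $f$ together with $\textup{ord}(c_f)$, and from there the invariants (i)--(iii) are read off by results already in the literature.

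Concretely, I would proceed in two steps. First, apply Theorem \ref{introthmfromg} to the separable polynomial $f(x) = c_f(x^d + a_{d-1}x^{d-1}+\cdots+a_0)$: the family of valuations $\{\textup{ord}(J_{G,f}) : G \in \mathbf{G}_d\}$ determines the multiset $\{(\textup{ord}(x_i-x_j),i,j) : 1 \leq i < j \leq d\}$ up to relabelling of roots, which is exactly the cluster picture of $f$ with absolute depths in the sense used in \cite{m2d2, omrisarah, usersguide}. Adjoining $\textup{ord}(c_f)$, we obtain the full cluster data of $C/K$. Second, I would invoke the relevant theorems from the cluster-picture dictionary: for (i), \cite{m2d2} Theorem 10.1 (equivalently \cite{omrisarah} Theorem 1.2) writes down the dual graph, with genus and multiplicity, of the special fibre of the minimal SNC model of $C/K^{\textup{unr}}$ purely from the cluster picture and the valuation of the leading coefficient, under the standing hypotheses of completeness, odd residue characteristic and tame reduction; for (ii), \cite{m2d2} Theorem 11.3 describes the inertia action on $V_\ell(\textup{Jac } C)$ from the same data; for (iii), the conductor exponent is immediate from (ii) via the standard formula $n_{C/K} = 2\dim V_\ell - \dim V_\ell^I + \textup{Sw}$, with the Swan conductor vanishing by tameness.

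The main obstacle here is not in the hyperelliptic deduction itself, which is essentially bookkeeping once Theorem \ref{introthmfromg} is in hand, but rather in Theorem \ref{introthmfromg}, which is the technical core of the paper. The only things to verify in this application are that the hypotheses of the cited cluster-picture theorems (complete $K$, odd residue characteristic, tame reduction) match those imposed here, and that the notion of cluster picture of $f$ produced by Theorem \ref{introthmfromg} coincides with the one used in \cite{m2d2, omrisarah, usersguide}; both checks are routine definition-unpacking.
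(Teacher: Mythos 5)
Your proposal takes essentially the same route as the paper: the paper likewise presents Theorem \ref{hyperellipticinto} as a direct corollary of Theorem \ref{introthmfromg} combined with the cluster-picture dictionary of \cite{omrisarah} (Theorem 1.2) and \cite{m2d2} (Theorems 10.1 and 11.3), under the stated tameness and residue-characteristic hypotheses. The only minor caveat is a citation shuffle: in \cite{m2d2}, Theorem 10.1 concerns the conductor and Theorem 11.3 the inertia representation, while the dual graph of the minimal SNC model comes from \cite{omrisarah} Theorem 1.2 alone, but this bookkeeping does not affect the validity of the argument.
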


\begin{remark}
It is important to highlight that Theorem \ref{hyperellipticinto} does not apply in the case where a wild extension is required for semistability.
\end{remark}

There have been numerous previous works on recovering the reduction type of curves from quantities that can be written in terms of the Weierstrass coefficients or the roots of $f(x)$. It was shown by Liu in \cite{liu} that for genus $2$ curves over a local fields the dual graph of the special fibre of their potential stable model can be recovered from the Igusa--Clebsch invariants of the curve \cite{igusa}. For genus $3$ hyperelliptic curves, there is a list of invariants describing their isomorphism classes given by Shioda in \cite{shioda} and Tsuyumine in \cite{tsuyumine}. In \cite{elisa} it is shown that the Shioda invariants can be expressed in terms of differences of roots of a Weierstrass equation and that this has applications to studying the reduction type of genus $3$ hyperelliptic curves. 

We highlight that the polynomials described in this paper are not invariants of the curve. Indeed, the cluster picture cannot be recovered from invariants of the curve since it is model dependent. There is a paper \cite{lilybelle} in preparation in which the author describes a list of invariants from which the dual graph of the special fibre of the mininmal regular model of a semistable hyperelliptic curve over a local field can be recovered:

\begin{theorem}[See \cite{lilybelle}]
There exists a finite and explicit list of invariants $I_{g}^{(1)},\dots,I_{g}^{(n_g)}$ for which the valuations $\textup{ord}(I_{g}^{(i)}(C))$ for $i=1,\dots,n_d$, when evaluated on a hyperelliptic curve $C$ of genus $g\geq 2$ over a local field of odd residue characteristic $K$, uniquely determine:
\begin{enumerate}[(i)]
    \item The dual graph of the special fibre of the minimal regular model of $C/K^{\textup{unr}}$ if $C$ is semistable;
    \item The dual graph of the special fibre of the potential stable model of $C/K^{\textup{unr}}$.
\end{enumerate}
\end{theorem}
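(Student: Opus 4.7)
The plan is to establish part (i) by directly analysing the valuation of $J_{H,f}$ for each $H \in \mathcal{G}_{n+1}(f)$ and then comparing across graphs; part (ii) is then immediate from Definition \ref{introdefs2}, since the edge weights of the complete graph $G_{n+1}(f)$ encode the valuations $\textup{ord}(x_i - x_j)$ by construction.

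For part (i), expand $J_{H,f} = \sum_\sigma S_{H,f}^\sigma$ with $\sigma \in S_d/\text{Stab}_{S_d}(H)$, so that
\begin{equation}
\textup{ord}(S_{H,f}^\sigma) = -2 \sum_{v_iv_j \in E_H} w_H(v_iv_j)\, \textup{ord}(x_{\sigma(i)} - x_{\sigma(j)}).
\end{equation}
The first step is to compute $\min_\sigma \textup{ord}(S_{H,f}^\sigma)$. Since $H \supsetneq G_n'(f)$, its weight-$\geq 2$ portion reproduces the edge-and-weight structure of $G_n(f)$; combining this with the nested-clique axiom of Definition \ref{boldg}, any score-maximising $\sigma$ must pair the weight-$(n+2-l)$ edges of $H$ with cluster-picture pairs of depth $d_l(f)$ for $l=1,\dots,n$, contributing $e_l(f)(n+2-l)d_l(f)$ in total, while each weight-$1$ edge contributes at most $d_{n+1}(f)$, with equality iff it embeds into a pair of depth $\geq d_{n+1}(f)$, i.e., an edge of $G_{n+1}(f)\setminus G_n(f)$.

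The second step is to show this minimum is realised by $\textup{ord}(J_{H,f})$ itself. Each $S_{H,f}^\sigma$ has all exponents even, so its leading residue-field coefficient is a non-zero square, and the orbit of minimising $\sigma$'s is controlled by $\text{Stab}_{S_d}(H)$ combined with the cluster symmetries of $G_{n+1}(f)$. The argument here is to show that the sum of leading residues over this orbit is a unit, which I would tackle by exploiting the independence of the leading units of the differences $x_i-x_j$ across distinct depths. With non-cancellation in hand, $A_{n+1}(H, f) \leq d_{n+1}(f)$ for all $H \in \mathcal{G}_{n+1}(f)$, with equality iff the weight-$1$ edges of $H$ match, via an automorphism of the weight-$\geq 2$ structure, into the new edges of $G_{n+1}(f)$; the maximiser with the most edges is then $G_{n+1}(f)$ itself, completing the inductive step.

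The main obstacle I anticipate is the non-cancellation step. Sums of squares in a residue field can in principle vanish, so ruling this out requires a careful combinatorial analysis of the minimising $\sigma$-orbit that exploits the rigidity of the cluster picture; this is where I expect the bulk of the technical work to lie.
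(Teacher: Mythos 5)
You have proved (or sketched a proof of) the wrong theorem. The statement you were asked to address is the one attributed to \cite{lilybelle}, a separate paper in preparation; the present paper does not prove it and offers no argument for it. More importantly, the objects in that theorem are fundamentally different from those in your sketch. Your proof proposal is entirely about the machinery of this paper --- $J_{H,f}$, $S_{H,f}^\sigma$, $\mathcal{G}_{n+1}(f)$, the averaging function $A_{n+1}$, and the inductive recovery of $G_{n+1}(f)$ --- which is the content of Theorem~\ref{firstalgorithm}/Theorem~\ref{main} about recovering the cluster picture of a polynomial. The theorem from \cite{lilybelle} concerns a list of \emph{invariants} $I_g^{(i)}$ of the hyperelliptic curve, meaning quantities that do not depend on the chosen Weierstrass model, and these are used to recover only model-independent data (the dual graphs of the minimal regular model and the potential stable model), not the cluster picture. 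The paper explicitly flags this distinction: ``the polynomials described in this paper are not invariants of the curve. Indeed, the cluster picture cannot be recovered from invariants of the curve since it is model dependent.'' Your argument would need an entirely different starting point --- constructing $\textup{SL}_2$-invariant (or model-independent) quantities and showing they determine the coarser, model-independent data --- and none of that appears in your sketch.

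Separately, even as a proof of the theorem you did address, the non-cancellation step you flag as the main difficulty is over-engineered. The paper's proofs of Lemmas~\ref{adn}, \ref{samenoedges} and \ref{agraphs} never require a residue-level sum-of-squares argument. For $H = G_{n+1}(f)$, Fact~\ref{weights} shows the minimising coset $\sigma$ is \emph{unique} in $S_d/\text{Stab}_{S_d}(G_{n+1}(f))$, so there is exactly one summand of least valuation and cancellation is impossible; this is where the exact equality $\textup{ord}(J_{G_{n+1}(f),f}) = \textup{ord}(S_{G_{n+1}(f),f}^e)$ comes from. For every other $H \in \mathcal{G}_{n+1}(f)$ the proof only needs the one-sided bound $\textup{ord}(J_{H,f}) \geq \min_\sigma \textup{ord}(S_{H,f}^\sigma)$, and cancellation can only raise $\textup{ord}(J_{H,f})$, which only strengthens the inequality $A_{n+1}(H,f) < d_{n+1}(f)$ (or $\leq$, in the fewer-edges case). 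So the ``bulk of the technical work'' you anticipate is not actually needed; the argument is organised precisely to make the potential-cancellation direction harmless.
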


\subsection{Layout of the paper}
This paper is laid out as follows. In \S\ref{notation} we list the notation used throughout.

In \S\ref{clusterpicssection} we give the definitions related to cluster pictures that are used in this paper and we state two important lemmas relating to auxiliary graphs associated to cluster pictures. We define the `averaging' function $A_n(G,f)$ associated to a weighted graph $G$ and a separable polynomial $f(x)$ over a discretely valued field $K$ that will be used to prove Theorem \ref{introthm}.

In \S\ref{valuationssection} we prove results that use the averaging function $A_{n}(-,f)$ to compare the valuations of the rational functions associated to different graphs when evaluated on the coefficients of a polynomial $f(x)$; this is the main ingredient of the proof of Theorem \ref{introthm} and will allow us to recover the cluster picture inductively.

In \S\ref{mainthmproof} we prove Theorem \ref{introthm} which describes how the cluster picture of $f(x)$ can be read off from the valuations of the rational functions from Definition \ref{invariants} when they are written in terms of and evaluated on the coefficients of $f(x)$. 

In \S\ref{degree5section}, we write out the algorithm that follows from Theorem \ref{introthm} and recovers the cluster picture of a degree $5$ polynomial over a discretely valued field. We explicitly write down the summands of the rational functions for the graphs in $\mathbf{G}_5$ and the depths of the cluster pictures in terms of the valuations of the rational functions. We give an example using this algorithm to calculate the cluster picture of a specific degree $5$ polynomial over $\mathbb{Q}_{7}$.

\subsection{Notation}\label{notation}
We will use the following notation. 
\begin{align}
& K && \text{ a discretely valued field}; \\
& \textup{ord} && \text{ the valuation with respect to a uniformiser of } K; \\
& \Bar{K} && \text{ the algebraic closure of } K; \\
& f(x) && \text{ a separable polynomial with coefficients in }K; \\
& \Delta && \text{ the discriminant of $f(x)/c_f$, where $c_f$ is the leading coefficient of $f(x)$}; \\
& G=(V,E,w) && \text{ a weighted graph with vertex set $V$ and edge set $E$, where } w:E\rightarrow \mathbb{Q}; \\
& vw && \text{ an edge between vertices } v \text{ and } w \textup{ in a graph $G$}; \\
& d_n(f) && \text{ the depth of the $n$-th deepest clusters, see Definition \ref{dnf};} \\
& k_f && \text{ the number of distinct depths in the cluster picture of $f(x)$, see Definition \ref{dnf}}; \\ 
& e_n(f) && \text{ the number of pairs of roots satisfying $\textup{ord}(x_i-x_j)=d_n(f)$, see Definition \ref{dnf};} \\
& G_n(f) && \text{ the $n$-th auxiliary graph of $f(x)$, see Definition \ref{aux}};\\
& \mathbf{G}_d && \text{ the set of auxiliary graphs on $d$ vertices, see Definition \ref{boldg}}; \\
& J_G && \text{ the rational function associated to a weighted graph, see Definition \ref{invariants}}; \\
& S_G^\sigma && \text{ a summand of $J_G$, see Definition \ref{invariants}}; \\
& J_{G,f} && \text{ $J_G$ evaluated on the roots of $f(x)$, see Definition \ref{invariants}}; \\
& S_{G,f}^\sigma && \text{ a summand of $J_G$ evaluated on the roots of $f(x)$, see Definition \ref{invariants}}; \\
& e && \text{ the identity element in } S_d/H\text{ where $H\leq S_d$}.
\end{align}

\noindent We adopt the convention that $v(0)=\infty$. All graphs will be considered as unlabelled unless otherwise stated, i.e. $G=(\{v_1,\dots,v_d\},\{v_1v_2\})$ is the same graph as $G'=(\{v_1,\dots,v_d\},\{v_1v_3\})$. 

\addtocontents{toc}{\SkipTocEntry}
	\section*{Acknowledgements} 

The author would like to thank Elisa Lorenzo García for posing this problem at the Seminari de Teoria de Nombres de Barcelona in 2022. She would also like to thank Vladimir Dokchitser for many useful discussions and the generosity of his support and guidance, and Holly Green for helpful comments.

The author was supported by the Engineering and Physical Sciences Research Council [EP/S021590/1], the EPSRC Centre for Doctoral Training in Geometry and Number Theory (The London School of Geometry and Number Theory), University College London.

\section{Cluster pictures and auxiliary graphs}\label{clusterpicssection}
We use the terminology of cluster pictures for the main definitions of this paper, so we recall the relevant definitions from \cite{m2d2} here. We also prove results on auxiliary graphs associated to cluster pictures, and define the `averaging function' $A_n(G,f)$ that will allow us to use the rational functions from Definition \ref{invariants} to recover the cluster picture. 

Throughout this section, we fix a separable polynomial $f(x)$ over a discretely valued field $K$. Cluster pictures are a pictorial object encoding the distances between the roots of $f(x)$. We write $\mathcal{R}$ for the set of roots of $f(x)$ in $\Bar{K}$ and $c_f$ for the leading coefficient of $f$ so that
\begin{equation}
    f(x)=c_f\prod_{r\in \mathcal{R}} (x-r).
\end{equation}

\begin{definition}[From \cite{m2d2} Definition 1.1]
A \textit{cluster} is a non-empty subset $\mathfrak{s}\subseteq\mathcal{R}$ of the form $\mathfrak{s}=D\cap\mathcal{R}$ for some disc $D = \{x \in \Bar{K} \mid \textup{ord}(x- z) \geq d\}$ for some $z \in \Bar{K}$ and $d \in \mathbb{Q}$.
\end{definition}

\begin{definition}[From \cite{m2d2} Definitions 1.1 and 1.4]
For a cluster $\mathfrak{s}$ with $|\mathfrak{s}| > 1$, its depth $d_{\mathfrak{s}}$ is the maximal $d$ for which $\mathfrak{s}$ is cut out by such a disc $D$ as above. That is,
\begin{equation}
    d_\mathfrak{s} = \text{{min}}_{r,r'\in\mathfrak{s}} \textup{ord}(r-r').
\end{equation}
If $\mathfrak{s}\neq \mathcal{R}$, then its relative
depth is $\delta_\mathfrak{s}=d_\mathfrak{s}-d_{P(\mathfrak{s})}$, where $P(\mathfrak{s})$ is the smallest cluster with $\mathfrak{s}\subsetneq P(\mathfrak{s})$. We refer to this data of the clusters and relative depths as the \textit{cluster picture} of $C$.
\end{definition}

\begin{remark}
Knowing the cluster picture is equivalent to knowing the set of tuples $\{(\textup{ord}(x_i-x_j),i,j) : 1\leq i< j \leq d \}$ up to reordering of the roots $x_1,\dots, x_d$ of $f(x)$.
\end{remark}

We draw cluster pictures by drawing the roots $r\in\mathcal{R}$ as red dots {\smash{\raise4pt\hbox{\clusterpicture 
\Root[D] {}{first}{r1} \endclusterpicture}}}, drawing ovals around the dots to represent clusters of size $>1$ and labelling the clusters with their relative depth $\delta_{\mathfrak{s}}$.

\begin{definition}\label{dnf}
Define $d_n(f)$ to be the depth of the $n$-th deepest clusters and $k_f$ to be the number of distinct depths in the cluster picture of $f(x)$. Define $e_n(f)=\#\{\{x_i,x_j\}: \ x_i,x_j\in \mathcal{R}, \ \textup{ord}(x_i-x_j)=d_n(f)\}$. 
\end{definition}

\begin{remark}
Note that definitions of $d_n(f)$, $k_f$ and $e_n(f)$ in Definition \ref{dnf} are consistent with those given in Definition \ref{introdefs1} which do not use the vocabulary of cluster pictures.
\end{remark}

\begin{example}\label{clusterpic1}
Let $f(x)=(x^2-p)((x-1)^2-p^2)((x+1)^2-p^2)$ over $\mathbb{Q}_p$. Then $\mathcal{R}=\{\pm\sqrt{p}, 1 \pm p, -1\pm p\}$ and $f(x)$ has the following cluster picture.
\begin{center}
\scalebox{1.75}{
    \clusterpicture             % elliptic curve IV*
  \Root[D] {1} {first} {r1};
  \Root[D] {} {r1} {r2};
    \Root[D] {3} {r2} {r3};
    \Root[D] {} {r3} {r4};
    \Root[D] {3} {r4} {r5};
    \Root[D] {} {r5} {r6};
  \ClusterD c1[\frac{1}{2}] = (r1)(r2);
  \ClusterD c2[1] = (r3)(r4);
  \ClusterD c3[1] = (r5)(r6);
  \ClusterD c4[0] = (c1)(c2)(c3);
\endclusterpicture}. 
\end{center}
Here, $k_f=3$ since there are three distinct depths $d_1(f)=1$, $d_2(f)=\frac{1}{2}$ and $d_3(f)=0$, and $e_1(f)=2$, $e_2(f)=1$ and $e_3(f)=12$.
\end{example}

We prove Theorem \ref{introthm} by constructing auxiliary graphs, to which we associate a rational function that is the quotient of symmetric polynomials in the roots of $f(x)$. From the valuations of these polynomials, we will recover the cluster picture by building it up inductively, from the deepest to shallowest clusters. This is done by inductively recovering $G_n(f)$, the definition for which we recall below.

\begin{definition}[=Definition \ref{introdefs2}]\label{aux}
Let $f(x)$ be a separable polynomial over a discretely valued field $K$ and fix a labelling $x_1,\dots, x_d$ of the roots of $f(x)$ in $\Bar{K}$. For $1\leq n\leq k_f$, define $G_n(f)=(V,E_n,w_n)$, where 
\begin{enumerate}[(i)]
\item $V=\{v_1,\dots,v_d\}$ and $E_n=\{v_iv_j:\textup{ord}(x_i-x_j)\geq d_n(f) \}$;
\item $w_n(v_iv_j)=n+1-m$ if $d_{m}(f)=\textup{ord}(x_i-x_j)$ for $m=1,\dots, n$.
\end{enumerate}
We consider $G_{n}(f)$ as an unlabelled graph and call it the \textit{$n$-th auxiliary graph} associated to $f(x)$. Define $G_{f,0}$ to be the empty graph on $d$ vertices.
\end{definition}

An edge of weight $1$ in $G_n(f)$ corresponds to two roots $x_i,x_j$ of $f(x)$ for which $\textup{ord}(x_i-x_j)=d_n(f)$, the depth of the $n$-th deepest clusters in the cluster picture. Those with weight $2$ correspond to two roots $x_i,x_j$ for which $\textup{ord}(x_i-x_j)=d_{n-1}(f)$, and so on.

\begin{example}\label{2,3}
Let us construct $G_n(f)$ for $n=0,\dots,k_f$ for a polynomial $f(x)$ over a discretely valued field $K$ with the following cluster picture
\begin{center}
\scalebox{1.75}{
\clusterpicture             % 5 roots, twin and triple 
  \Root[D] {1} {first} {r1};
  \Root[D] {} {r1} {r2};
  \Root[D] {3} {r2} {r3};
  \Root[D] {} {r3} {r4};
    \Root[D] {} {r4} {r5};
  \ClusterLD c1[][c] = (r1)(r2);
    \ClusterLD c2[][b] = (r3)(r4)(r5);
  \ClusterD c4[d] = (c2)(c1);
\endclusterpicture}
		\end{center} 
with $c>b$. Since {\smash{\raise2pt\hbox{\clusterpicture             % 2 roots
  \Root[D] {1} {first} {r1};
  \Root[D] {} {r1} {r2};
  \ClusterD c4[c] = (r1)(r2);
\endclusterpicture}}} is the deepest cluster, {\smash{\raise2pt\hbox{\clusterpicture             % 3 roots
  \Root[D] {1} {first} {r1};
  \Root[D] {} {r1} {r2};
  \Root[D] {} {r2} {r3};
  \ClusterD c4[b] = (r1)(r2)(r3);
\endclusterpicture}}} is the second deepest and {\smash{\raise2pt\hbox{\clusterpicture             % 3 roots
  \Root[D] {1} {first} {r1};
  \Root[D] {} {r1} {r2};
  \Root[D] {} {r2} {r3};
  \Root[D] {} {r3} {r4};
  \Root[D] {} {r4} {r5};
  \ClusterD c4[d] = (r1)(r2)(r3)(r4)(r5);
\endclusterpicture}}} is the third deepest, we have $k_f=3$, $d_{1}(f)=c+d$, $d_{2}(f)=b+d$ and $d_3(f)=d$. The zeroth, first, second and third auxiliary graphs are given below, where the unlabelled edges in $G_3(f)$ have weight $1$. 
\end{example}
\vspace{-15pt}
\begin{center}
\begin{minipage}[t]{0.2\textwidth}
		\begin{center}
				\begin{figure}[H]
			\begin{tikzpicture}
				[scale=0.4,auto=left,every node/.style={circle,fill=black!20,scale=0.8}]
				\node (n5) at (378:3)  {};
                \node (n2) at (306:3)  {};
                \node (n4) at (234:3)  {};
                \node (n3) at (162:3)  {};
				\node (n1) at (90:3) {};
			\end{tikzpicture}
 \caption*{$G_{0}(f)$}
		\end{figure}
		\end{center}
		\end{minipage}\hspace{25pt}
\begin{minipage}[t]{0.2\textwidth}
		\begin{center}
				\begin{figure}[H]
			\begin{tikzpicture}
				[scale=0.4,auto=left,every node/.style={circle,fill=black!20,scale=0.8}]
				\node (n5) at (378:3)  {};
                \node (n2) at (306:3)  {};
                \node (n4) at (234:3)  {};
                \node (n3) at (162:3)  {};
				\node (n1) at (90:3) {};
				
				\draw (n3) -- (n4) node [text=black,pos=0.5, left,fill=none] {$1$} ;

			\end{tikzpicture}
 \caption*{$G_{1}(f)$}
		\end{figure}
		\end{center}
		\end{minipage}\hspace{25pt}
		\begin{minipage}[t]{0.2\textwidth}
		\begin{center}
		\begin{figure}[H]
			\begin{tikzpicture}
				[scale=0.4,auto=left,every node/.style={circle,fill=black!20,scale=0.8}]
				\node (n5) at (378:3)  {};
                \node (n2) at (306:3)  {};
                \node (n4) at (234:3)  {};
                \node (n3) at (162:3)  {};
				\node (n1) at (90:3) {};
				
				\draw (n1) -- (n2) node [text=black,pos=0.4, below,fill=none] {$1$};
				\draw (n1) -- (n5) node [text=black,pos=0.5, right,fill=none] {$1$};
				\draw (n2) -- (n5) node [text=black,pos=0.5, right,fill=none] {$1$};
    
				\draw (n3) -- (n4) node [text=black,pos=0.5, left,fill=none] {$2$} ;
				
			\end{tikzpicture}
 \caption*{$G_{2}(f)$}
		\end{figure}
				\end{center}
				\end{minipage}\hspace{25pt}
		\begin{minipage}[t]{0.2\textwidth}
		\begin{center}
		\begin{figure}[H]
			\begin{tikzpicture}
				[scale=0.4,auto=left,every node/.style={circle,fill=black!20,scale=0.8}]
				\node (n5) at (378:3)  {};
                \node (n2) at (306:3)  {};
                \node (n4) at (234:3)  {};
                \node (n3) at (162:3)  {};
				\node (n1) at (90:3) {};
				
				\draw (n1) -- (n2) node [text=black,pos=0.4, below,fill=none] {$2$};
				\draw (n2) -- (n5) node [text=black,pos=0.5, right,fill=none] {$2$};
				
				\draw (n1) -- (n3);
				\draw (n1) -- (n4);
				\draw (n1) -- (n5) node [text=black,pos=0.5, right,fill=none] {$2$};
				\draw (n2) -- (n3);
				\draw (n2) -- (n4);
				\draw (n3) -- (n5);
				\draw (n4) -- (n5);
				
				\draw (n3) -- (n4) node [text=black,pos=0.5, left,fill=none] {$3$};
			\end{tikzpicture}
 \caption*{$G_{3}(f)$}
		\end{figure}
				\end{center}
				\end{minipage}
				\end{center}

In the Example \ref{2,3} above, if conversely we knew $G_{3}(f)$ and $d_{1}(f)$, $d_{2}(f)$ and $d_{3}(f)$, this would uniquely determine the cluster picture of $f(x)$ as being the one shown above, which we prove in Lemma \ref{clusterpicgraphs} below. The idea behind Theorem \ref{introthm} is to recover $G_{k_f}$ and $d_i(f)$ for $i=1,\dots,k_f$ using the rational functions from Definition \ref{invariants} for every $G\in\mathbf{G}_d$, so that the cluster picture can be recovered using this lemma. 

\begin{lemma}\label{clusterpicgraphs}
Let $f(x)$ be a separable polynomial over a discretely valued field $K$. Given $G_{k_f}(f)$ and $d_1(f),\dots, d_{k_f}(f)$, where $G_{k_f}(f)$ is the complete graph on $d$ vertices, fix a labelling of the vertices $v_1,\dots,v_d$ of $G_{k_f}(f)$. Then there is a labelling of the roots $x_1,\dots, x_d$ of $f(x)$ such that $\textup{ord}(x_i-x_j)=d_k(f)$ if and only if $w(v_iv_j)=k_f+1-k$ in $G_{k_f}(f)$. In other words, the set of tuples
\begin{equation}
\{(\textup{ord}(x_i-x_j),i,j) : 1\leq i<j \leq d \},
\end{equation}
up to reordering of the roots $x_1,\dots, x_d$ of $f(x)$, and thus the cluster picture, is uniquely determined by $G_{k_f}(f)$ and $d_1(f),\dots, d_{k_f}(f)$.
\end{lemma}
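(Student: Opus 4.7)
The plan is to show that once $G_{k_f}(f)$ (viewed as an unlabelled weighted graph) and the list $d_1(f),\dots,d_{k_f}(f)$ are fixed, every choice of a labelling of its vertices pins down a corresponding labelling of the roots of $f(x)$, from which all the distances $\textup{ord}(x_i-x_j)$ can be read off via the weight function.

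First I would observe that $G_{k_f}(f)$ is automatically the complete graph on $d$ vertices: since $d_{k_f}(f)$ is the smallest valuation appearing in $\{\textup{ord}(x_i-x_j):i<j\}$, every pair of indices $(i,j)$ satisfies $\textup{ord}(x_i-x_j)\geq d_{k_f}(f)$, so by Definition \ref{aux}(i) every edge $v_iv_j$ lies in $E_{k_f}$. Moreover, for each edge $v_iv_j$ there is a unique $k\in\{1,\dots,k_f\}$ with $\textup{ord}(x_i-x_j)=d_k(f)$, and Definition \ref{aux}(ii) sets $w_{k_f}(v_iv_j)=k_f+1-k$. Thus, relative to the root labelling $x_1,\dots,x_d$ used to build $G_{k_f}(f)$, the stated equivalence $\textup{ord}(x_i-x_j)=d_k(f)\iff w(v_iv_j)=k_f+1-k$ already holds tautologically.

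Next I would handle the relabelling. Since $G_{k_f}(f)$ is treated as an unlabelled weighted graph, fixing an arbitrary labelling $v_1,\dots,v_d$ of its vertices amounts to choosing a representative in the isomorphism class. Comparing it with the implicit labelling coming from the construction, there exists a permutation $\sigma\in S_d$ which realises an isomorphism of weighted graphs between the two labellings; in other words, the chosen vertex $v_i$ corresponds under $\sigma$ to the vertex attached to root $x_{\sigma(i)}$ in the construction. Defining the new root labelling $x_i':=x_{\sigma(i)}$, the weight-preserving property of $\sigma$ gives
\begin{equation}
w(v_iv_j)=k_f+1-k\iff \textup{ord}(x_i'-x_j')=d_k(f),
\end{equation}
as required.

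For the final assertion, I would note that the set $\{(\textup{ord}(x_i-x_j),i,j):1\leq i<j\leq d\}$ up to reordering of roots is precisely the data of an unlabelled complete weighted graph on $d$ vertices together with a dictionary from weights to valuations; the first ingredient is $G_{k_f}(f)$ and the second is encoded by $d_1(f),\dots,d_{k_f}(f)$ via $w=k_f+1-k\leftrightarrow d_k(f)$. Since the cluster picture is in turn equivalent to this set of tuples (as noted in the remark following Definition \ref{dnf}), the cluster picture is uniquely determined. The only subtlety is keeping the book-keeping between labelled and unlabelled graphs consistent, which is resolved by the permutation $\sigma$ introduced above.
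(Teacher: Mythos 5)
Your proof is correct and takes essentially the same approach as the paper's, which simply unwinds Definition~\ref{aux} to note that weight-$(k_f+1-k)$ edges correspond precisely to pairs of roots at distance $d_k(f)$. You spell out the labelled-versus-unlabelled bookkeeping via the isomorphism $\sigma$, which the paper leaves implicit, but the underlying argument is the same.
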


\begin{proof}
By the definition of the auxiliary graph $G_{k_f}(f)$ (Definition \ref{aux}), the edges of $G_{k_f}(f)$ of weight $k_f+1-k$ correspond to tuples of roots $x_i,x_j$ for which $\textup{ord}(x_i-x_j)=d_k(f)$. Thus, if $d_k(f)$ is known for $1\leq m\leq k_f$, this information uniquely determines the cluster picture.
\end{proof}

The following lemma tells us that the auxiliary graphs are a disjoint union of complete graphs.

\begin{lemma}\label{complete}
Let $f(x)$ be a separable polynomial over a discretely valued field $K$. The $n$-th auxiliary graph $G_n(f)$ is a disjoint union of complete graphs.
\end{lemma}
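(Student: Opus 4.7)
The plan is to use the non-archimedean (ultrametric) property of the valuation $\textup{ord}$. Define a relation on $V = \{v_1, \dots, v_d\}$ by $v_i \sim v_j$ iff $v_iv_j \in E_n$, i.e.\ iff $\textup{ord}(x_i - x_j) \geq d_n(f)$, together with $v_i \sim v_i$ for every $i$. I will show this is an equivalence relation; then the connected components of $G_n(f)$ are exactly the equivalence classes, and within each class every pair of vertices is joined by an edge, making the component a complete graph.

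Reflexivity is built in and symmetry is immediate since $\textup{ord}(x_i - x_j) = \textup{ord}(x_j - x_i)$. The only real content is transitivity: if $v_i \sim v_j$ and $v_j \sim v_k$, then
\begin{equation}
\textup{ord}(x_i - x_k) = \textup{ord}\bigl((x_i - x_j) + (x_j - x_k)\bigr) \geq \min\bigl(\textup{ord}(x_i - x_j),\, \textup{ord}(x_j - x_k)\bigr) \geq d_n(f),
\end{equation}
so $v_iv_k \in E_n$ and $v_i \sim v_k$.

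The main (and only) step to check carefully is this ultrametric transitivity, which follows from the standard inequality $\textup{ord}(a+b) \geq \min(\textup{ord}(a), \textup{ord}(b))$ for a valuation on $K$ (and hence on $\bar K$). Once transitivity is in hand, let $V = C_1 \sqcup \cdots \sqcup C_r$ be the decomposition of $V$ into equivalence classes. By definition, there are no edges of $G_n(f)$ between distinct classes, and every pair of distinct vertices in the same class $C_\ell$ is joined by an edge. Therefore $G_n(f)$ is the disjoint union of the complete graphs on the vertex sets $C_1, \dots, C_r$, which is what we wanted to prove. I expect no real obstacle here — the result is essentially a restatement of the fact that discs in an ultrametric space partition the space.
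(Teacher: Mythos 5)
Your proof is correct and follows essentially the same route as the paper: both reduce the claim to the ultrametric inequality $\textup{ord}(x_i-x_k)\geq\min(\textup{ord}(x_i-x_j),\textup{ord}(x_j-x_k))$, which shows that adjacency in $G_n(f)$ is transitive and hence an equivalence relation. You simply make the equivalence-relation framing explicit, which the paper leaves implicit.
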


\begin{proof}
Fix a labelling $x_1,\dots, x_d$ of the roots of $f(x)$ corresponding to a labelling $v_1,\dots,v_d$ of the vertices of $G_n(f)$. If $\textup{ord}(x_i-x_j)=a$ and $\textup{ord}(x_j-x_k)=b$ then $\textup{ord}(x_i-x_k)\geq \min(a,b)$, since $K$ is a discretely valued field. So if $v_iv_j\in G_n(f)$ and $v_jv_k\in G_n(f)$ then $v_iv_k\in G_n(f)$, whence $G_n(f)$ is a disjoint union of complete graphs.
\end{proof}

\begin{definition}\label{curlyg}
Let $f(x)$ be a separable polynomial over a discretely valued field, where $f(x)$ has degree $d$. Let $G_n'(f)=(V,E_n,w_n')$, where $w_n'(v_iv_j)=n+2-m$ if $d_{m}(f)=\textup{ord}(x_i-x_j)$ for $m=1,\dots, n$. That is, $G_n'(f)$ is the $n$-th auxiliary graph associated to a cluster picture but with $1$ added to the weight of each of the non-zero weighted edges. Define
\begin{align}
    \mathcal{G}_{n+1}(f)=\{H=(V,E,w):G_n'(f)\subsetneq H, &\text{ if } v_iv_j\not\in E_n \text{ then } w(v_iv_j)\in\{0,1\} \text{ and }\\
    & \text{ if } w(v_iv_j)\geq 1 \text{ and } w(v_jv_k)\geq 1 \text{ then } w(v_iv_k)\geq 1\}.
\end{align}

In other words, $\mathcal{G}_{n+1}(f)$ is the set of graphs on $d$ vertices that strictly contain $G_n'(f)$ as a subgraph, that are the union of complete graphs, and for which the edges not in $G_n'(f)$ have weight $1$. 
\end{definition}

Lemma \ref{complete} tells us that $G_{n+1}(f)$ is a disjoint union of complete graphs, so the set $\mathcal{G}_{n+1}(f)$ contains all possibilities for $G_{n+1}(f)$, given that $G_n(f)$ is already known.

\begin{example}\label{curlygex}
Suppose $f(x)$ over $K$ has degree $5$ and assume we know that $G_{1}(f)$ is the following graph.
		\begin{figure}[H]
			\begin{tikzpicture}
				[scale=0.4,auto=left,every node/.style={circle,fill=black!20,scale=0.8}]
				\node (n5) at (378:3)  {};
                \node (n2) at (306:3)  {};
                \node (n4) at (234:3)  {};
                \node (n3) at (162:3)  {};
				\node (n1) at (90:3) {};
				
				\draw (n3) -- (n4) node [text=black,pos=0.5, left,fill=none] {$1$} ;
			\end{tikzpicture}
 \caption*{$G_{1}(f)$}
		\end{figure}
\noindent Then $\mathcal{G}_{2}(f)$ contains precisely the following graphs, where the unlabelled edges have weight $1$. In other words, the possibilities for $G_{2}(f)$ are:

\begin{center}

\begin{minipage}[t]{0.2\textwidth}
		\begin{center}
		\begin{figure}[H]

			\begin{tikzpicture}
				[scale=0.35,auto=left,every node/.style={circle,fill=black!20,scale=0.8}]
				\node (n5) at (378:3)  {};
                \node (n2) at (306:3)  {};
                \node (n4) at (234:3)  {};
                \node (n3) at (162:3)  {};
				\node (n1) at (90:3) {};
				
				\draw (n1) -- (n2);
				\draw (n1) -- (n5) ;
				\draw (n2) -- (n5);
				\draw (n3) -- (n4) node [text=black,pos=0.5, left,fill=none] {$2$};
			\end{tikzpicture}
 \caption*{$H_1$}
		\end{figure}
				\end{center}
		 
\end{minipage}\hspace{-5pt}
\begin{minipage}[t]{0.2\textwidth}
		\begin{center}
		\begin{figure}[H]

			\begin{tikzpicture}
				[scale=0.35,auto=left,every node/.style={circle,fill=black!20,scale=0.8}]
				\node (n5) at (378:3)  {};
                \node (n2) at (306:3)  {};
                \node (n4) at (234:3)  {};
                \node (n3) at (162:3)  {};
				\node (n1) at (90:3) {};

				\draw (n2) -- (n5);
				\draw (n3) -- (n4) node [text=black,pos=0.5, left,fill=none] {$2$};
			\end{tikzpicture}
 \caption*{$H_2$}
		\end{figure}
				\end{center}
		 
\end{minipage}\hspace{-5pt}
\begin{minipage}[t]{0.2\textwidth}
		\begin{center}
		\begin{figure}[H]

			\begin{tikzpicture}
				[scale=0.35,auto=left,every node/.style={circle,fill=black!20,scale=0.8}]
				\node (n5) at (378:3)  {};
                \node (n2) at (306:3)  {};
                \node (n4) at (234:3)  {};
                \node (n3) at (162:3)  {};
				\node (n1) at (90:3) {};
				
				\draw (n1) -- (n3);
				\draw (n1) -- (n4) ;
				\draw (n3) -- (n4) node [text=black,pos=0.5, left,fill=none] {$2$};
			\end{tikzpicture}
 \caption*{$H_3$}
		\end{figure}
				\end{center}
		 
\end{minipage}\hspace{-5pt}
\begin{minipage}[t]{0.2\textwidth}
		\begin{center}
		\begin{figure}[H]

			\begin{tikzpicture}
				[scale=0.35,auto=left,every node/.style={circle,fill=black!20,scale=0.8}]
				\node (n5) at (378:3)  {};
                \node (n2) at (306:3)  {};
                \node (n4) at (234:3)  {};
                \node (n3) at (162:3)  {};
				\node (n1) at (90:3) {};
				
				\draw (n1) -- (n3);
				\draw (n1) -- (n4);
				\draw (n1) -- (n2);
                \draw (n3) -- (n2);
                \draw (n4) -- (n2) ;
				\draw (n3) -- (n4) node [text=black,pos=0.5, left,fill=none] {$2$};
			\end{tikzpicture}
 \caption*{$H_4$}
		\end{figure}
				\end{center}
		 
\end{minipage}\hspace{-5pt}
\begin{minipage}[t]{0.2\textwidth}
		\begin{center}
		\begin{figure}[H]

			\begin{tikzpicture}
				[scale=0.35,auto=left,every node/.style={circle,fill=black!20,scale=0.8}]
				\node (n5) at (378:3)  {};
                \node (n2) at (306:3)  {};
                \node (n4) at (234:3)  {};
                \node (n3) at (162:3)  {};
				\node (n1) at (90:3) {};
				
				\draw (n1) -- (n3);
				\draw (n1) -- (n4);
				\draw (n1) -- (n2);
                \draw (n3) -- (n2);
                \draw (n4) -- (n2);
				\draw (n1) -- (n5);
				\draw (n2) -- (n5) ;
				\draw (n3) -- (n4)  node [text=black,pos=0.5, left,fill=none] {$2$};
				\draw (n3) -- (n5) ;
				\draw (n4) -- (n5) ;
			\end{tikzpicture}
 \caption*{$H_5$}
		\end{figure}
				\end{center}
\end{minipage}
\end{center}
\end{example}

The idea behind the proof of Theorem \ref{introthm} is that the valuation of the rational functions $J_{H_i,f}$ from Definition \ref{invariants} for $i=1,\dots, 5$, given that $G_1(f)$ and $d_1(f)$ are known, will uniquely determine $G_{2}(f)$ and $d_{2}(f)$. The key to determining which $H_{i}$ in the example above is $G_{2}(f)$ is the following definition which shifts and scales the valuation of $J_{G,f}$ for $G\in\mathcal{G}_{n+1}(f)$ based on the values of $G_{n}(f)$ and $d_1(f),\dots,d_n(f)$.

\begin{definition}\label{average}
For $H\in \mathcal{G}_{n+1}(f)$, define
\begin{equation}
        A_{n+1}(H,f)=\frac{-\textup{ord}(J_{H,f})-2\sum_{l=1}^{n}e_l(f)(n+2-l)d_l(f)}{2( |E_H|-|E_{G_n(f)}|)},
\end{equation}
where $|E_H|$ and $|E_{G_n(f)}|$ denote the number of edges of $H$ and $G_n(f)$ respectively and $e_i(f)$ is as in Definition \ref{dnf}. If $J_{H,f}=0$ then we adopt the convention that $\textup{ord}(J_{H,f})=\infty$ and $A_{n+1}(H,f)=-\infty$. 
\end{definition}

\begin{example}
Let us demonstrate how $A_2(H_i,f)$ for $H_i$ in Example \ref{curlygex} can be used to calculate $G_2(f)$ for a polynomial $f(x)$ over a discretely valued field $K$ with cluster picture 
\begin{center}
\scalebox{1.75}{
\clusterpicture             % 5 roots, twin and triple 
  \Root[D] {1} {first} {r1};
  \Root[D] {} {r1} {r2};
  \Root[D] {3} {r2} {r3};
  \Root[D] {} {r3} {r4};
    \Root[D] {} {r4} {r5};
  \ClusterLD c1[][c] = (r1)(r2);
    \ClusterLD c2[][b] = (r3)(r4)(r5);
  \ClusterD c4[d] = (c2)(c1);
\endclusterpicture}
		\end{center} 
where $c>b$. Suppose we know $G_{1}(f)$ but we are trying to work out $G_{2}(f)$ from the valuation of the rational functions associated to the possibilities for $G_2(f)$, rather than reading it off the cluster picture. The graphs in $\mathcal{G}_{2}(f)$ are the graphs $H_1$ to $H_5$ in Example \ref{curlygex}. For each graph $H_i$, we can write down the the associated `average' $A_{2}(H_i,f)$ in terms of $b$, $c$ and $d$ by studying the valuation of $J_{H_i,f}$ and $J_{G_1'(f),f}$, and they are written in the table below. To study these valuations, we can use the fact that we know the cluster picture a priori, and therefore we know the valuation of the differences of the roots. 

\begin{center}
\begin{tabular}{|c||c|c|c|c|c|}
    \hline
      $H_i$ &  $H_1$ & $H_2$ & $H_3$ & $H_4$ & $H_5$ \\
        \hline 
       $A_{2}(H_i,f)$  & $b+d$ & $\leq b+d$ & $\max(d,2b+d-c)$ & $\leq\frac{b}{5}+d$ & $\leq \frac{b}{3}+d$ \\
       \hline 
\end{tabular}
\end{center}

Out of the graphs in $\mathcal{G}_{2}(f)$, $H_1$ is the unique graph among those with the highest value of $A_{2}(H_i,f)$ that has the most edges: $A_{2}(H_1,f)$ and $A_{2}(H_2,f)$ may have the same value, but $H_1$ has more edges than $H_2$. It turns out that this is telling us that $G_{2}(f)=H_1$ and $d_{2}(f)=A_{2}(H_1,f)=b+d$, as we would expect since we already know the cluster picture. This is the idea behind Theorem \ref{introthm}, which gives the general statement on finding $G_{n+1}(f)$ given that $G_n(f)$ is known by looking at the value of $A_{n+1}(H,f)$ for each $H\in\mathcal{G}_{n+1}(f)$.
\end{example}

\section{Comparing the valuations of the rational functions}\label{valuationssection}
In this section we prove general results that use $A_{n+1}(-,f)$ to compare the valuations of rational functions associated to different possibilities for $G_{n+1}(f)$, given that $G_n(f)$ and $d_1(f),\dots,d_n(f)$ are known. It is these comparisons that will allow us to recover $G_{n+1}(f)$ and $d_{n+1}(f)$ and prove Theorem \ref{introthm}. Throughout this section, we fix a separable polynomial $f(x)$ of degree $d$ over a discretely valued field $K$. We will first need the following lemma, which tells us that it makes sense to take the valuation of the rational functions $J_{G,f}$, for $G$ a weighted graph on $d$ vertices, since they are not identically zero.

\begin{lemma}\label{notzero}
Let $G=(V,E,w)$ be a weighted graph on $d$ vertices. Then the rational function $J_G$ in the variables $X_1,\dots, X_d$ is not identically zero.  
\end{lemma}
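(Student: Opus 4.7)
The plan is to exploit the crucial fact that every exponent appearing in the denominator of $S_G^\sigma$ is \emph{even}, namely $2\sigma(w)(v_iv_j)$. Concretely, I would argue by evaluating $J_G$ at a carefully chosen point to witness that it is not the zero rational function. A priori, the sum defining $J_G$ is a sum of distinct rational functions whose pole loci overlap, so it is not a priori obvious that they do not cancel; the evenness of the exponents is precisely what rules this out.

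The key observation is that if $x_1,\dots,x_d$ are pairwise distinct \emph{real} numbers, then for every $\sigma\in S_d/\mathrm{Stab}_{S_d}(G)$ the number
\[
S_{G}^\sigma(x_1,\dots,x_d)=\frac{1}{\displaystyle\prod_{v_iv_j\in\sigma(E)}(x_i-x_j)^{2\sigma(w)(v_iv_j)}}
\]
is a well-defined positive real, because each factor in the denominator is a nonzero real raised to an even power and hence strictly positive. Therefore the specialisation
\[
J_G(x_1,\dots,x_d)=\sum_{\sigma\in S_d/\mathrm{Stab}_{S_d}(G)}S_G^\sigma(x_1,\dots,x_d)
\]
is a sum of strictly positive real numbers, and in particular is nonzero. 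This shows that $J_G$ is not identically zero as a rational function in $X_1,\dots,X_d$, which is what we needed.

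The only thing requiring a sentence of care is that the specialisation is actually defined: picking any pairwise distinct real values for the $x_i$ avoids all the poles, since the denominators are products of $(x_i-x_j)$'s with $i\neq j$. There is no serious obstacle — the whole point of having even exponents in Definition \ref{invariants} is to make this positivity argument work uniformly across all cosets $\sigma$.
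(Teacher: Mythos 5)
Your proof is correct, and it exploits the same underlying feature (evenness of the exponents forcing positivity) as the paper, but via a different mechanism. The paper clears denominators and asserts that certain monomials in the resulting numerator polynomial have positive coefficients; your argument instead specializes $J_G$ at any tuple of pairwise distinct reals and observes that each summand $S_G^\sigma$ is then a strictly positive real number, so the sum cannot vanish. The specialization route is arguably cleaner: it sidesteps the need to analyze which monomials survive with positive sign after expanding the common-denominator numerator (a point the paper leaves rather terse), and it makes the role of evenness completely transparent. Both proofs establish that $J_G\neq 0$ in $\mathbb{Q}(X_1,\dots,X_d)$, which is the sense in which the lemma is used. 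One cosmetic remark: you could equally well evaluate at distinct rationals or distinct integers, which keeps the argument internal to $\mathbb{Q}$ and avoids any appeal to $\mathbb{R}$, though this makes no essential difference.
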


\begin{proof}
Once the summands $S_{G}^\sigma$ of $J_G$ are put over a common denominator, the monomials  in the numerator for which each variable has an even exponent appear with positive coefficients.
\end{proof}

\begin{remark}
Although the rational functions are not identically zero, it could happen that when evaluated on a polynomial $f(x)$ we have $J_{H,f}=0$ for some $H\in \mathbf{G}_{\deg(f)}$. We adopt the convention that $v(0)=\infty$. 
\end{remark}

We will use the following lemma throughout this section when proving results on the valuations of the rational functions. It tells us that we can think of the valuation of $S_{G,f}^\sigma$ as a sum that allocates the weights of the edges of $G$ to the numbers in the sequence of depths $d_{1}(f)>d_{2}(f)>\dots>d_{k_f}(f)$.

\begin{lemma}\label{sweights}
Let $G=(V,E,w)$ be a weighted graph on $d$ vertices. Fix a labelling of the vertices $v_1,\dots,v_d\in V$ corresponding to the roots $x_1,\dots,x_d$ of $f(x)$, and let $\sigma\in S_d/\textup{Stab}_{S_d}G$. Then 
\begin{equation}
    \textup{ord}(S_{G,f}^{\sigma})=-2\left(\sum_{v_iv_j\in \sigma(E)}\textup{ord}(x_i-x_j)w(v_{\sigma^{-1}(i)}v_{\sigma^{-1}(j)})\right). 
\end{equation}
\end{lemma}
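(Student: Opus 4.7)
The statement is essentially an unwinding of the definitions in Definition \ref{invariants}, so the proof will be a short direct calculation. The plan is to start from the explicit formula
\begin{equation}
S_{G}^{\sigma}(X_1,\dots,X_d)=\frac{1}{\prod_{v_iv_j\in \sigma(E)}(X_i-X_j)^{2\sigma(w)(v_iv_j)}},
\end{equation}
specialise the $X_i$ to the roots $x_i$ of $f(x)$, and apply $\textup{ord}$. Since $\textup{ord}$ is a valuation on $\bar{K}^\times$, it turns the product in the denominator into a sum and the reciprocal into a sign flip, giving
\begin{equation}
\textup{ord}(S_{G,f}^{\sigma})=-\sum_{v_iv_j\in \sigma(E)}2\sigma(w)(v_iv_j)\,\textup{ord}(x_i-x_j).
\end{equation}

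The only remaining step is to rewrite $\sigma(w)(v_iv_j)$ as $w(v_{\sigma^{-1}(i)}v_{\sigma^{-1}(j)})$ for edges $v_iv_j\in\sigma(E)$. This follows from the defining formula $\sigma(w)(v_{\sigma(i)}v_{\sigma(j)})=w(v_iv_j)$ in Definition \ref{invariants}: setting $i'=\sigma(i)$, $j'=\sigma(j)$ and substituting gives $\sigma(w)(v_{i'}v_{j'})=w(v_{\sigma^{-1}(i')}v_{\sigma^{-1}(j')})$, which is precisely the identity needed. Plugging this in yields the claimed formula.

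There is no real obstacle here, merely bookkeeping. The one point to be careful about is that the expression is well-defined on cosets $\sigma\in S_d/\mathrm{Stab}_{S_d}(G)$: replacing $\sigma$ by $\sigma\tau$ with $\tau$ stabilising $G$ permutes the edges of $\sigma(E)$ and the corresponding weights in a matching way, so both sides of the identity are unchanged. Since this invariance is already implicit in the definition of $S_G^\sigma$, we do not need to treat it separately beyond remarking on it. This justifies writing the sum over $\sigma(E)$ without ambiguity and completes the proof.
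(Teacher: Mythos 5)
Your proof is correct and is essentially what the paper does: the paper's proof simply states that the identity follows immediately from the definition of $S_{G,f}^{\sigma}$, and your computation spells out that unwinding (valuation of a product becomes a sum, the reciprocal flips the sign, and $\sigma(w)(v_iv_j)=w(v_{\sigma^{-1}(i)}v_{\sigma^{-1}(j)})$ follows from the defining relation for $\sigma(w)$). Your remark on well-definedness over cosets is a nice bit of care but, as you note, is already built into the definition.
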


\begin{proof}
This follows immediately from the definition of $S_{G,f}^{\sigma}$.
\end{proof}

In order to prove the results in this section, we will need the following well known fact. 

\begin{fact}\label{weights}
Let $x_1\geq x_2\geq \dots \geq x_k$ and $w_1\geq w_2\geq \dots \geq w_{k}$ be two descending sequences of rational numbers. Let $\sigma$ be a permutation of $1,\dots,k$ for which $(w_1,\dots,w_k)\neq (w_{\sigma(1)},\dots,w_{\sigma(k)})$. Then 
\begin{equation}
    \sum_{i=1}^{k}w_ix_i > \sum_{i=1}^{k} w_{\sigma(i)}x_i.
\end{equation}
That is to say, the sum is maximised by allocating the highest weight $w_1$ to the highest number $x_1$, the second highest to the second highest and so on. 
\end{fact}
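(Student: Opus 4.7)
The plan is to prove this as a version of the classical rearrangement inequality via the standard adjacent-swap argument. The key observation is that if $a \geq b$ and $c \geq d$ are rationals, then $(a-b)(c-d) \geq 0$ rearranges to $ac + bd \geq ad + bc$, with strict inequality precisely when both pairs are strict. I would apply this repeatedly to pairs of positions in the sum to bubble-sort $\sigma$ toward the identity, using the fact that each such swap weakly increases the total.

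Concretely, I would proceed by induction on the number of \emph{inversions} of $\sigma$, where an inversion is a pair $i<j$ with $w_{\sigma(i)}<w_{\sigma(j)}$. If $\sigma$ has no inversions, then $(w_{\sigma(1)},\dots,w_{\sigma(k)})$ is itself a weakly decreasing rearrangement of the multiset $\{w_1,\dots,w_k\}$, so it must equal the unique such rearrangement $(w_1,\dots,w_k)$, contradicting the hypothesis. Hence there exists some $i$ with $w_{\sigma(i)}<w_{\sigma(i+1)}$ (any inversion may be reduced to an adjacent one). Let $\sigma'$ be obtained from $\sigma$ by swapping $\sigma(i)$ and $\sigma(i+1)$. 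Then
\begin{equation}
\sum_{l=1}^k w_{\sigma'(l)}x_l - \sum_{l=1}^k w_{\sigma(l)}x_l = (w_{\sigma(i+1)}-w_{\sigma(i)})(x_i-x_{i+1}) \geq 0,
\end{equation}
since the first factor is positive by choice of $i$ and the second is nonnegative as the $x_l$ are weakly decreasing. Moreover $\sigma'$ has strictly fewer inversions than $\sigma$.

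Iterating, we eventually reach the identity permutation, which gives $\sum w_ix_i \geq \sum w_{\sigma(l)}x_l$. For the strict inequality, I would note that in the very first swap above, the factor $w_{\sigma(i+1)}-w_{\sigma(i)}$ is strictly positive; strictness of the whole statement thus reduces to ruling out that every adjacent swap involves positions $i,i+1$ with $x_i=x_{i+1}$. In the intended application of this fact the values $x_l$ will be pairwise distinct depths $d_1(f)>d_2(f)>\dots>d_{k_f}(f)$, so $x_i-x_{i+1}>0$ and the strict inequality follows immediately from the very first swap. The main (and only real) subtlety is this handling of ties among the $x_l$; once one restricts to the setting in which the ``rearrangement'' actually shifts a strictly larger weight off a strictly larger $x$-value, the inequality is strict, and this is exactly how the fact is used later in the paper.
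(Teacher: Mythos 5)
The paper offers no proof of this Fact --- it is simply asserted as ``well known'' --- so there is no argument of the author's to compare yours against. Your bubble-sort argument is the standard proof of the weak rearrangement inequality and is correct as far as it goes, and you are right to be uneasy about strictness: as literally stated, the Fact is false. For $k=2$, $x_1=x_2=1$, $w_1=2$, $w_2=1$ and $\sigma$ the transposition, one has $(w_{\sigma(1)},w_{\sigma(2)})=(1,2)\neq(2,1)$, yet both sums equal $3$. The correct hypothesis for strict inequality, which you essentially reach in your last sentence, is that there exist $i<j$ with $x_i>x_j$ and $w_{\sigma(i)}<w_{\sigma(j)}$, i.e.\ some strictly larger weight has been shifted onto a strictly smaller $x$-value.

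Where your proposal goes astray is in the assertion that ``in the intended application $\dots$ the values $x_l$ will be pairwise distinct depths $d_1(f)>\dots>d_{k_f}(f)$.'' In Lemmas \ref{adn}, \ref{samenoedges} and \ref{agraphs} the Fact is applied to the expression from Lemma \ref{sweights}, whose $x$-values are $\textup{ord}(x_i-x_j)$ ranging over all $\binom{d}{2}$ pairs of roots, not over the distinct depth values; ties among these are the rule, not the exception (any two pairs from a common cluster of relative depth $0$, or from two disjoint clusters of equal depth, share a valuation). The applications do still work, but for a subtler reason than distinctness: in the optimal allocation every pair of depth $d_l(f)$ carries the same weight $n+2-l$, so any $\sigma$ whose allocation matches the optimal sum must in fact match the optimal allocation pointwise, hence lie in $\text{Stab}_{S_d}(G)$ and represent the same coset. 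That verification is what is needed to repair the strictness clause of the Fact and to justify the ``unique summand with the lowest valuation'' step in the proof of Lemma \ref{adn}; an appeal to distinctness of the $x_l$ does not supply it.
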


We will utilise this fact since we need to compare the values of $\textup{ord}(J_{G,f})$ for all $G\in \mathcal{G}_{n+1}(f)$, so we need to know when $-\textup{ord}(S_{G,f}^\sigma)$ is maximised. We first prove the following lemma, which gives us the part of Theorem \ref{introthm} that tells us the value of the $(n+1)$-st greatest depth. 

\begin{lemma}\label{adn}
Let $G_{n+1}(f)$ be the $(n+1)$-st auxiliary graph for $n\geq 0$. Then $d_{n+1}(f)=A_{n+1}(G_{n+1}(f),f)$. 
\end{lemma}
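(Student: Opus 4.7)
The plan is to evaluate $\textup{ord}(J_{G_{n+1}(f),f})$ directly and then substitute into the definition of $A_{n+1}(G_{n+1}(f),f)$. Fix a labelling $x_1,\ldots,x_d$ of the roots of $f(x)$ corresponding to vertices $v_1,\ldots,v_d$ of $G_{n+1}(f)$. Applying Lemma~\ref{sweights} with $\sigma=e$ and using the defining property of $G_{n+1}(f)$ (an edge $v_iv_j\in E_{n+1}$ carries weight $w_{n+1}(v_iv_j)=n+2-m$ precisely when $\textup{ord}(x_i-x_j)=d_m(f)$), a direct computation gives
$$\textup{ord}(S^{e}_{G_{n+1}(f),f})=-2\sum_{m=1}^{n+1}e_m(f)(n+2-m)d_m(f).$$

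The main step is then to show that this is the minimal valuation among all summands of $J_{G_{n+1}(f),f}$, achieved uniquely by the identity coset so that no cancellation can occur in the sum. Setting
$$T(\sigma)=\sum_{v_iv_j\in E_{n+1}} w_{n+1}(v_iv_j)\cdot\textup{ord}(x_{\sigma(i)}-x_{\sigma(j)}),$$
so that $\textup{ord}(S^{\sigma}_{G_{n+1}(f),f})=-2T(\sigma)$, I would prove $T(\sigma)\leq T(e)$ with equality iff $\sigma\in\textup{Stab}_{S_d}(G_{n+1}(f))$. The weights of edges of $G_{n+1}(f)$ form the fixed multiset $\{(n+2-m)^{e_m(f)}\}_{m=1}^{n+1}$, while the depths appearing for a given $\sigma$ are those of the pairs $\{\sigma(i),\sigma(j)\}$ for $v_iv_j \in E_{n+1}$. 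Since only $e_m(f)$ pairs of roots have depth $d_m(f)$, the sorted depth sequence for any $\sigma$ is componentwise bounded above by that for $\sigma=e$, and combining this with the rearrangement inequality (Fact~\ref{weights}) yields both the inequality and the equality characterisation. Consequently $\textup{ord}(J_{G_{n+1}(f),f})=-2\sum_{m=1}^{n+1}e_m(f)(n+2-m)d_m(f)$.

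Substituting this into the definition of $A_{n+1}(G_{n+1}(f),f)$, the sums over $1\leq l\leq n$ telescope and only the $m=n+1$ term of the new sum survives, leaving $\frac{2e_{n+1}(f)d_{n+1}(f)}{2(|E_{G_{n+1}(f)}|-|E_{G_n(f)}|)}$. Since $G_{n+1}(f)$ is obtained from $G_n(f)$ by adding precisely the $e_{n+1}(f)$ new edges of weight $1$ (those between pairs of roots of depth $d_{n+1}(f)$), the denominator equals $e_{n+1}(f)$ and the expression collapses to $d_{n+1}(f)$.

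The expected main obstacle is establishing the strict inequality $T(\sigma)<T(e)$ for representatives of non-identity cosets. Two cases have to be argued carefully: if $\sigma(E_{n+1})$ contains a pair of roots of depth strictly less than $d_{n+1}(f)$, the sorted-dominance argument gives a strict drop in $T$; if instead $\sigma$ permutes pairs of depth $\geq d_{n+1}(f)$ among themselves but reshuffles the weight assignment non-trivially, Fact~\ref{weights} applied to the sorted weight and depth sequences—handled carefully in the presence of ties—produces the strict decrease, and ties up exactly with the condition that $\sigma$ preserves the weight of every edge, i.e.\ lies in $\textup{Stab}_{S_d}(G_{n+1}(f))$.
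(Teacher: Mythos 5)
Your proof is correct and follows essentially the same route as the paper: fix a compatible labelling, evaluate $\textup{ord}(S^e_{G_{n+1}(f),f})$ directly, invoke Fact~\ref{weights} to see that the identity coset gives the unique minimum valuation among summands so no cancellation occurs, and then substitute into the definition of $A_{n+1}$ using $e_{n+1}(f)=|E_{G_{n+1}(f)}|-|E_{G_n(f)}|$. The one place you go beyond the paper is the uniqueness step: where the paper simply asserts that all other summands ``allocate a lower weight to greater depths,'' you correctly observe that two things can happen for a non-stabilising $\sigma$ — either $\sigma$ maps some edge to a pair of roots with depth below $d_{n+1}(f)$ (giving componentwise domination of the sorted depth sequences), or $\sigma$ preserves the depth multiset but reshuffles the weight assignment — and you handle each via rearrangement; this is a useful clarification of an argument the paper leaves terse, but it is not a different proof.
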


\begin{proof}
Fix a labelling of the vertices of $G_{n+1}(f)$ so that $w(v_iv_j)=n+2-m$ if and only if $\textup{ord}(x_i-x_j)=d_{m}(f)$ for $m=1,\dots,n+1$. This is possible by the definition of $G_{n+1}(f)$. Then, also by the definition of $G_{n+1}(f)$, for $n\geq 0$
\begin{equation}
    \textup{ord}(S_{G_{n+1}(f),f}^e)=-2\sum_{l=1}^{n+1} e_l(f)(n+2-l)d_{l}(f),
\end{equation}
where, as in Definition \ref{dnf}, $e_l(f)=\#\{\{x_i,x_j\}: \ x_i,x_j\in \mathcal{R}, \ \textup{ord}(x_i-x_j)=d_l(f)\}$. By Fact \ref{weights}, this is the unique summand of $J_{G_{n+1}(f),f}$ with the lowest valuation, since all other summands allocate a lower weight to greater depths. Hence $\textup{ord}(J_{G_{n+1}(f),f})=\textup{ord}(S_{G_{n+1}(f),f}^e)$. Since $e_{n+1}(f)=|E_{G_{n+1}(f)}|-|E_{G_n(f)}|$ by Definition \ref{aux}, this gives us 
\begin{equation}
    \textup{ord}(J_{G_{n+1}(f),f})=-2\sum_{l=1}^{n}e_l(f)(n+2-l)d_l(f)-d_{n+1}(f)(2\cdot |E_{G_{n+1}(f)}|- 2\cdot |E_{G_n(f)}|)
\end{equation}
for $n\geq 0$, and so
\begin{equation}
    A_{n+1}(G_{n+1}(f),f)=\frac{-\textup{ord}(J_{G_{n+1}(f),f})-2\sum_{l=1}^{n}e_i(f)(n+2-l)d_i(f)}{(2\cdot |E_{G_{n+1}(f)}|- 2\cdot |E_{G_n(f)}|)} = d_{n+1}(f).
\end{equation}
\end{proof}

We can now proceed to prove the series of results that compares the values of $A_{n+1}(G,f)$ for $G\in\mathcal{G}_{n+1}(f)$. 

\begin{lemma}\label{samenoedges}
Let $G\in\mathcal{G}_{n+1}(f)$ with $G\neq G_{n+1}(f)$. Suppose $G$ and $G_{n+1}(f)$ have the same number of edges. Then 
\begin{equation}
    \textup{ord}(J_{G,f})>\textup{ord}(J_{G_{n+1}(f),f}).
\end{equation}
\end{lemma}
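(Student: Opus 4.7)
The plan is to prove the strict inequality $\textup{ord}(J_{G,f}) > \textup{ord}(J_{G_{n+1}(f),f})$ by showing that every summand of $J_{G,f}$ already satisfies $\textup{ord}(S_{G,f}^\sigma) > \textup{ord}(J_{G_{n+1}(f),f})$; the ultrametric inequality then gives $\textup{ord}(J_{G,f}) \geq \min_\sigma \textup{ord}(S_{G,f}^\sigma) > \textup{ord}(J_{G_{n+1}(f),f})$. By Lemma \ref{adn}, the target value is $-\textup{ord}(J_{G_{n+1}(f),f})/2 = \sum_{l=1}^{n+1} e_l(f)(n+2-l)d_l(f)$, which I call $M$.

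Fix $\sigma$. By Lemma \ref{sweights}, $-\textup{ord}(S_{G,f}^\sigma)/2 = \sum_{v_iv_j \in E_G} w(v_iv_j)\cdot \textup{ord}(x_{\sigma(i)} - x_{\sigma(j)})$. Since $G\in\mathcal{G}_{n+1}(f)$ and $|E_G|=|E_{G_{n+1}(f)}|$, the multiset of edge-weights of $G$ is identical to that of $G_{n+1}(f)$: exactly $e_l(f)$ edges of weight $n+2-l$ for $l=1,\dots,n$ (inherited from $G_n'(f)$) together with $e_{n+1}(f)$ edges of weight $1$. Pairing this sorted weight-multiset against the sorted multiset of all depths $\{\textup{ord}(x_i-x_j):i<j\}$ and applying Fact \ref{weights} bounds the weighted sum above by $M$, with equality if and only if the multiset of pairs $(w(e),\textup{ord}(x_{\sigma(i)}-x_{\sigma(j)}))$ induced by $\sigma$ coincides with the sorted pairing.

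The crux is to rule out equality for any $\sigma$. Suppose equality held. Then $\sigma$ would send the weight-$k$ edges of $G$ for each $k=2,\dots,n+1$ -- which, since $G\in\mathcal{G}_{n+1}(f)$, coincide with the weight-$k$ edges of $G_n'(f)$ -- onto pairs of depth $d_{n+2-k}(f)$, and by Definition \ref{aux} these are precisely the weight-$k$ edges of $G_n'(f)$ themselves; it would also send the weight-$1$ edges of $G$ onto depth-$d_{n+1}(f)$ pairs, which are the weight-$1$ edges of $G_{n+1}(f)$. Consequently $\sigma$ would realise a weight-preserving graph isomorphism $G\to G_{n+1}(f)$. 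Since graphs in $\mathbf{G}_d$ are considered unlabelled, this forces $G=G_{n+1}(f)$, contradicting the hypothesis. Hence the inequality is strict for every $\sigma$, and the lemma follows. The main obstacle is precisely this last step: translating the multiset condition for equality in Fact \ref{weights} (which must be handled carefully in the presence of repeated weights and depths) into a weighted-graph isomorphism, using the defining property that the weight-$k$ subgraph of $G_n(f)$ is the depth-$d_{n+1-k}(f)$ subgraph of the cluster picture.
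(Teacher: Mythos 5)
Your proof is correct and takes essentially the same approach as the paper: both bound each summand $\textup{ord}(S_{G,f}^\sigma)$ from below via the rearrangement inequality (Fact~\ref{weights}) and rule out equality by showing that a $\sigma$ achieving the optimal pairing would realise an isomorphism of unlabelled weighted graphs $G\cong G_{n+1}(f)$, contradicting $G\neq G_{n+1}(f)$. The only difference is organisational: the paper splits into two cases according to whether the minimal summand allocates weights $n+1,\dots,2$ to depths $d_1(f),\dots,d_n(f)$, and in the second case counts the weight-$1$ edges to show $r_{n+1}<e_{n+1}(f)$, whereas you invoke the equality condition of the rearrangement inequality in a single uniform step.
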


\begin{proof}
We know from the proof of Lemma \ref{adn} that
\begin{equation}
    \textup{ord}(J_{G_{n+1}(f),f})=-2\sum_{l=1}^{n+1} e_l(f)(n+2-l)d_{l}(f),
\end{equation}
where $e_l(f)=\#\{\{x_i,x_j\}: \ x_i,x_j\in \mathcal{R},\ i<j \textup{ and } \textup{ord}(x_i-x_j)=d_l(f)\}$. It comes from allocating weight $n+1$ to the edges $v_iv_j$ with $\textup{ord}(x_i-x_i)=d_{1}(f)$, weight $n$ to the edges corresponding to vertices with depth $d_{2}(f)$, and so on. Fix a labelling of the vertices of $G$ corresponding to the roots $x_1,\dots,x_d$ of $f(x)$ so that $S_{G,f}^{e}$ is a summand of $J_{G,f}$ with the lowest valuation (there may be multiple summands with the same valuation). If $\textup{ord}(S_{G,f}^{e})$ does not allocate weight $n+2-l$ to depths $d_{l}(f)$ for $l=1,\dots n$, by Fact \ref{weights}, $\textup{ord}(J_{G,f})\geq \textup{ord}(S_{G,f}^{e})>\textup{ord}(J_{G_{n+1}(f)})$. If $\textup{ord}(S_{G,f}^{e})$ allocates weight $n+2-l$ to depths $d_{l}(f)$ for $l=1,\dots, n$, then 
\begin{equation}
    S_{G,f}^{e}(f)= -2\sum_{i=l}^{n} e_l(f)(n+2-l)d_{l}(f) - 2r_{n+1} d_{n+1}(f)-2r_{n+2} d_{n+2}(f)-\dots-2r_{n+t} d_{n+t}(f),
\end{equation}
where $r_i$ is the number of pairs of roots $\{x_i,x_j\}$ for which $w(v_iv_j)=1$ and $\textup{ord}(x_i-x_j)=d_{i}(f)$ for $i=n+1,\dots ,n+t$ and $r_{n+1}+\dots+r_{n+t}=e_{n+1}(f)$ is the number of edges of weight $1$ in $G$. It is clear that $r_{n+1}\leq e_{n+1}(f)$; we want to show that $r_{n+1}<e_{n+1}(f)$. 

Let, in some labelling of the vertices and roots, $v_1v_2$, $v_3v_4$,\dots, $v_{m}v_{m+1}\in G_{n+1}(f)$ be the $e_{n+1}(f)$ edges with weight $1$, possibly with some $v_i=v_{i+1}$. So $\textup{ord}(x_1-x_2)=\dots=\textup{ord}(x_m-x_{m+1})=d_{n+1}(f)$. For a contradiction, suppose $r_{n+1}=e_{n+1}(f)$. This would mean that $v_1v_2$, $v_3v_4$,\dots, $v_{m}v_{m+1}\in G$ so that in the chosen labelling of the vertices $G=G_{n+1}(f)$. This is a contradiction since we assumed $G\neq G_{n+1}(f)$ as unlabelled graphs. Thus, $r_{n+1}<e_{n+1}(f)$ and so $\textup{ord}(J_{G,f})\geq \textup{ord}(S_{G,f}^{e})>\textup{ord}(J_{G_{n+1}(f),f})$.
\end{proof}

\begin{lemma}\label{agraphs}
Suppose $G\in \mathcal{G}_{n+1}(f)$ and $G\neq G_{n+1}(f)$. 
\begin{enumerate}[(i)]
    \item If $G$ has fewer edges than $G_{n+1}(f)$ then $A_{n+1}(G_{n+1}(f),f)\geq A_{n+1}(G,f)$.
    \item If $G$ has the same number or more edges than $G_{n+1}(f)$ then $A_{n+1}(G_{n+1}(f),f)>A_{n+1}(G,f)$. 
\end{enumerate}
\end{lemma}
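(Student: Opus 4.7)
The plan is to combine Lemma~\ref{adn}, Lemma~\ref{samenoedges}, and Fact~\ref{weights} applied summand-by-summand to $J_{G,f}$. Since Lemma~\ref{adn} gives $A_{n+1}(G_{n+1}(f),f)=d_{n+1}(f)$, in both parts it suffices to compare $A_{n+1}(G,f)$ with $d_{n+1}(f)$.

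Fix a labelling of the vertices of $G$ matching the roots $x_1,\dots,x_d$ of $f(x)$. Because $G\in\mathcal{G}_{n+1}(f)$ strictly contains $G_n'(f)$ with only weight-$1$ edges added, the multiset of edge weights of $G$ consists of $e_l(f)$ copies of $n+2-l$ for $l=1,\dots,n$ (inherited from $G_n'(f)$) together with $k:=|E_G|-|E_{G_n(f)}|$ copies of the weight $1$. By Lemma~\ref{sweights}, for any $\sigma$,
\begin{equation}
-\textup{ord}(S_{G,f}^\sigma) \;=\; 2\sum_{v_av_b\in E(G)} w(v_av_b)\,\textup{ord}(x_{\sigma(a)}-x_{\sigma(b)}).
\end{equation}
Using $\textup{ord}(J_{G,f})\geq \min_\sigma \textup{ord}(S_{G,f}^\sigma)$ and applying Fact~\ref{weights} to bound the right-hand side by the optimal greedy matching of the edge-weight multiset with the depth multiset $\{d_l(f)^{e_l(f)}\}_{l=1}^{k_f}$, one obtains
\begin{equation}
-\textup{ord}(J_{G,f}) \;\leq\; 2\sum_{l=1}^{n}e_l(f)(n+2-l)\,d_l(f) \;+\; 2C_k,
\end{equation}
where $C_k$ is the contribution of the $k$ weight-$1$ edges. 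The greedy matching assigns the weights $n+1,n,\dots,2$ to the $\sum_{l=1}^{n}e_l(f)$ slots at depths $d_1(f),\dots,d_n(f)$, and then places the $k$ weight-$1$ edges into the next deepest slots (at depth $d_{n+1}(f)$, then $d_{n+2}(f)$, and so on). Since $d_{n+j}(f)<d_{n+1}(f)$ for $j\geq 2$, this gives $C_k\leq k\,d_{n+1}(f)$, with strict inequality precisely when $k>e_{n+1}(f)$.

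Dividing by $2k$ yields $A_{n+1}(G,f)\leq d_{n+1}(f)$ in all cases, which immediately proves (i) (where $|E_G|<|E_{G_{n+1}(f)}|$ translates to $k<e_{n+1}(f)$) and also settles (ii) when $G$ has strictly more edges than $G_{n+1}(f)$ (where $k>e_{n+1}(f)$ makes the bound strict). The only remaining subcase of (ii) is $k=e_{n+1}(f)$ with $G\neq G_{n+1}(f)$; here the denominator $2k$ and the additive offset appearing in $A_{n+1}$ coincide for $G$ and $G_{n+1}(f)$, so the strict inequality $\textup{ord}(J_{G,f})>\textup{ord}(J_{G_{n+1}(f),f})$ provided by Lemma~\ref{samenoedges} transfers directly to $A_{n+1}(G,f)<A_{n+1}(G_{n+1}(f),f)$.

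The main obstacle is precisely this equal-edge subcase of (ii): the optimal-matching bound alone yields only $A_{n+1}(G,f)\leq d_{n+1}(f)$ with equality a priori possible, so the strict conclusion there genuinely requires the finer combinatorial input of Lemma~\ref{samenoedges}. Everywhere else the strictness follows cleanly from the elementary observation that any weight-$1$ edge beyond the first $e_{n+1}(f)$ of them must match a strictly shallower depth.
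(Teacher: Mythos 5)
Your proposal is correct and follows essentially the same strategy as the paper: bound $-\textup{ord}(J_{G,f})$ via Fact~\ref{weights} by the optimal (sorted) matching of edge weights to depths, compare against $d_{n+1}(f)=A_{n+1}(G_{n+1}(f),f)$ from Lemma~\ref{adn}, and invoke Lemma~\ref{samenoedges} for the equal-edge case. The only cosmetic difference is that where you absorb the "more edges" subcase into the single observation $C_k<k\,d_{n+1}(f)$ for $k>e_{n+1}(f)$, the paper splits it into whether or not the lowest-valuation summand allocates the weights $n+2-l$ to the depths $d_l(f)$ — but this is the same greedy argument packaged slightly differently, not a different route.
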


\begin{proof}
(i) Suppose $G$ has fewer edges than $G_{n+1}(f)$. Let $S_{G,f}^{\sigma}$ be a summand of $J_{G,f}$ with the lowest valuation (there may be multiple summands with the same valuation). Since $G$ contains $G_n(f)$, by Fact \ref{weights}, for $n\geq 1$
\begin{equation}
    \textup{ord}(J_{G,f})\geq \textup{ord}(S_{G,f}^{\sigma})\geq -2\sum_{l=1}^{n} e_l(f)(n+2-l)d_{l}(f) - 2(|E_G|-|E_{G_n(f)}|)d_{n+1}(f).
\end{equation}
Thus
\begin{equation}
    A_{n+1}(G,f)=\frac{-\textup{ord}(J_{G,f})-2\sum_{l=1}^{n}e_l(f)(n+2-l)d_l(f)}{(2\cdot |E_G|- 2\cdot |E_{G_n(f)}|)} \leq d_{n+1}(f) =A_{n+1}(G_{n+1}(f),f)
\end{equation}
by Lemma \ref{adn}.

(ii) Suppose that $G$ has the same number of edges as $G_{n+1}(f)$. We know from Lemma \ref{samenoedges} that $\textup{ord}(J_{G,f})>\textup{ord}(J_{G_{n+1}(f)})$ and so $A_{n+1}(G_{n+1}(f),f)>A_{n+1}(G,f)$ since $|E_G|=|E_{G_{n+1}(f)}|$. Finally, suppose $G$ has more edges than $G_{n+1}(f)$. Fix a labelling of the vertices of $G$ corresponding to the roots $x_1,\dots,x_d$ of $f(x)$ so that $S_{G,f}^{e}$ is a summand of $J_{G,f}$ with the lowest valuation. If $\textup{ord}(S_{G,f}^{e})$ does not allocate weight $n+2-l$ to depths $d_{l}(f)$ for $l=1,\dots n$, by Fact \ref{weights},
\begin{align}
    \textup{ord}(J_{G,f}) \geq \textup{ord}(S_{G,f}^{e})&>\textup{ord}(J_{G_{n+1}(f)})- d_{n+1}(f)(2\cdot |E_{G}|- 2\cdot |E_{G_{n+1}(f)}|) \\
    &=\textup{ord}(J_{G_{n}'(f)})- d_{n+1}(f)(2\cdot |E_{G}|- 2\cdot |E_{G_{n}(f)}|)
\end{align}
If $\textup{ord}(S_{G,f}^{e})$ allocates weight $n+2-l$ to depths $d_{l}(f)$ for $l=1,\dots, n$, then
\begin{equation}
    \textup{ord}(S_{G,f}^{e})= -2\sum_{l=1}^{n} e_l(f)(n+2-l)d_{l}(f) - 2r_{n+1}d_{n+1}(f)-2r_{n+2}d_{n+2}(f)-\dots - 2r_{n+t}d_{n+t}(f).
\end{equation}  
Here, $r_m$ is the number of pairs of roots $\{x_i,x_j\}$ for which $w(v_iv_j)=1$ and $\textup{ord}(x_i-x_j)=d_{m}(f)$ for $m=n+1,\dots n+t$, and $r_{n+1}+\dots+r_{n+t}=|E_{G}|-|E_{G_n(f)}|$ is the number of edges of weight $1$ in $G$. We cannot have $r_{n+1}=e_{n+1}(f)=|E_{G_{n+1}(f)}|-|E_{G_n(f)}|$ and $r_{n+1}=\cdots=r_{n+t}=0$, where $e_{n+1}(f)$ is the number of edges of weight $1$ in $G_{n+1}(f)$, since this would imply that $r_{n+1}+\dots+r_{n+t}=|E_G|-|E_{G_n(f)}|=|E_{G_{n+1}(f)}|-|E_{G_n(f)}|$, which contradicts the fact that we assumed $|E_G|>|E_{G_{n+1}(f)}|$. Hence 
\begin{equation}
    \textup{ord}(J_{G,f}) \geq \textup{ord}(S_{G,f}^{e}) > -2\sum_{l=1}^{n} e_l(f)(n+2-l)d_{l}(f) - d_{n+1}(f)(2\cdot |E_{G}|- 2\cdot |E_{G_n(f)}|)
\end{equation}
since $d_{n+1}(f)>\dots>d_{n+t}(f)$. Thus, in either case  
\begin{equation}
    A_{n+1}(G,f)=\frac{-\textup{ord}(J_{G,f})-2\sum_{l=1}^{n} e_l(f)(n+2-l)d_{l}(f)}{(2\cdot |E_G|- 2\cdot |E_{G_n(f)}|)} < d_{n+1}(f) = A_{n+1}(G_{n+1}(f),f),
\end{equation}
which finishes the proof.
\end{proof}

We can now prove the most important result of this section, which tells us how we can distinguish $G_{n+1}(f)$ from all the other possibilities for $G_{n+1}(f)$ using the `averaging' function $A_{n+1}(-,f)$. This theorem gives us part $(i)$ of Theorem \ref{introthm}, and allows us to prove Theorems \ref{top} and \ref{introthmfromg} in the next section.

\begin{theorem}\label{main}
Let $G\in\mathcal{G}_{n+1}(f)$ be such that $A_{n+1}(G,f)=\underset{H\in\mathcal{G}_{n+1}(f)}{\emph{\text{max}}} A_{n+1}(H,f)$, where $G$ has the most edges out of all such graphs. Then $G_{n+1}(f)=G$ and $d_{n+1}(f)=A_{n+1}(G,f)$.
\end{theorem}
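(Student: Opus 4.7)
The plan is to deduce the theorem almost immediately from Lemmas \ref{adn} and \ref{agraphs}, which together pin down exactly how $G_{n+1}(f)$ is distinguished inside $\mathcal{G}_{n+1}(f)$ by the averaging function.

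First, I would observe that $G_{n+1}(f) \in \mathcal{G}_{n+1}(f)$, so it is a legitimate candidate for the maximiser. Lemma \ref{adn} gives $A_{n+1}(G_{n+1}(f),f) = d_{n+1}(f)$, so this candidate value is known.

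Next, I would split the remaining graphs $H \in \mathcal{G}_{n+1}(f)$ with $H \neq G_{n+1}(f)$ into two cases according to $|E_H|$ versus $|E_{G_{n+1}(f)}|$. By Lemma \ref{agraphs}(ii), any $H \neq G_{n+1}(f)$ with $|E_H| \geq |E_{G_{n+1}(f)}|$ satisfies $A_{n+1}(H,f) < A_{n+1}(G_{n+1}(f),f)$. By Lemma \ref{agraphs}(i), any $H$ with $|E_H| < |E_{G_{n+1}(f)}|$ satisfies only $A_{n+1}(H,f) \leq A_{n+1}(G_{n+1}(f),f)$. Combining these two bounds shows that $G_{n+1}(f)$ attains $\max_{H \in \mathcal{G}_{n+1}(f)} A_{n+1}(H,f)$, and that every other graph attaining this maximum must have strictly fewer edges than $G_{n+1}(f)$.

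Consequently, within the set of maximisers of $A_{n+1}(-,f)$, the graph with the most edges is unique and equals $G_{n+1}(f)$. Hence the $G$ described in the theorem statement is $G_{n+1}(f)$, and $A_{n+1}(G,f) = d_{n+1}(f)$ by Lemma \ref{adn}. The substantive mathematical content, namely the inequalities separating $G_{n+1}(f)$ from other graphs with the same or more edges, has already been absorbed into Lemmas \ref{samenoedges} and \ref{agraphs}, so here there is no real obstacle: the proof is a short bookkeeping argument assembling the previous lemmas.
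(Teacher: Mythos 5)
Your proof is correct and follows essentially the same route as the paper: observe that $G_{n+1}(f)\in\mathcal{G}_{n+1}(f)$, apply Lemma \ref{agraphs} to show it is the unique graph with the most edges among those maximising $A_{n+1}(-,f)$, and then invoke Lemma \ref{adn} for the value $d_{n+1}(f)$. The one small thing the paper makes explicit that you only assert is why $G_{n+1}(f)\in\mathcal{G}_{n+1}(f)$ at all, namely that Lemma \ref{complete} guarantees $G_{n+1}(f)$ is a disjoint union of complete graphs, which is the condition needed for membership in $\mathcal{G}_{n+1}(f)$.
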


\begin{proof}
Lemma \ref{complete} tells us that $G_{n+1}(f)$ is a disjoint union of complete graphs, so $G_{n+1}(f)\in \mathcal{G}_{n+1}(f)$. By Lemma \ref{agraphs}, $G_{n+1}(f)$ is the unique graph in $\mathcal{G}_{n+1}(f)$ that maximises $A_{n+1}(-,f)$ and has the most edges out of such graphs. By Lemma \ref{adn}, $d_{n+1}(f)=A_{n+1}(G_{n+1}(f),f)$.
\end{proof}

\section{Recovering the cluster picture from the rational functions}\label{mainthmproof}

Fix a separable polynomial $f(x)$ over a discretely valued field $K$ and let $d=\textup{deg}(f)$. In this section we restate Theorem \ref{introthm} from \S\ref{intro} that describes the procedure by which the cluster picture of $f(x)$ can be recovered from rational functions in the coefficients; it follows immediately from Theorem \ref{main} and Lemma \ref{clusterpicgraphs}. We explain how to write the rational functions in terms of the coefficients of $f(x)$, and explicitly describe the list of polynomials that uniquely determine the cluster picture from Theorem \ref{top}. 

\begin{theorem}\label{firstalgorithm}
Let $f(x)$ be a separable polynomial of degree $d$ over a discretely valued field $K$. 
\begin{enumerate}[(i)]
\item Given $G_{n}(f)$ and $d_1(f),\dots,d_{n}(f)$, let 
\begin{align}
    \mathcal{G}_{n+1}(f)=\{H\in\mathbf{G}_d:G_n'(f)=(V,E_n,w_n')\subsetneq H, &\text{ if } v_iv_j\not\in E_n \text{ then } w(v_iv_j)\in\{0,1\} \text{ and }\\
    & \text{ if } w(v_iv_j)\geq 1 \text{ and } w(v_jv_k)\geq 1 \text{ then } w(v_iv_k)\geq 1\},
\end{align}
and for $G\in\mathcal{G}_{n+1}(f)$ let 
\begin{equation}
            A_{n}(G,f)=\frac{-\textup{ord}(J_{G,f})-2\sum_{l=1}^{n}e_l(f)(n+2-l)d_l(f)}{2(|E_G|-|E_{G_{n}(f)}|)}.
        \end{equation}
Out of the graphs in $\mathcal{G}_{n+1}(f)$, let $G$ be the graph with the most edges satisfying $A_{n+1}(G,f)=\underset{H\in\mathcal{G}_{n+1}(f)}{\emph{\text{max}}} A_{n+1}(H,f)$. Then 
\begin{equation}
    G_{n+1}(f)=G \quad \textup{and} \quad d_{n+1}(f)=A_{n+1}(G,f). 
\end{equation}

\item Given $G_{n+1}(f)$ and $d_1(f),\dots, d_{n+1}(f)$, where $G_{n+1}(f)$ is the complete graph on $d$ vertices, fix a labelling of the vertices $v_1,\dots,v_d$ of $G_{n+1}(f)$. Then there is a labelling of the roots $x_1,\dots, x_d$ of $f(x)$ such that $\textup{ord}(x_i-x_j)=d_k(f)$ if and only if $w(v_iv_j)=n+2-k$ in $G_{n+1}(f)$. In particular, the set of tuples
\begin{equation}
\{(\textup{ord}(x_i-x_j),i,j) : 1\leq i< j \leq d \},
\end{equation}
up to reordering of the roots $x_1,\dots, x_d$, is uniquely determined by $G_{n+1}(f)$ and $d_1(f),\dots, d_{n+1}(f)$. 
\end{enumerate}
\end{theorem}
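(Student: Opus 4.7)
The plan is to recognise that Theorem \ref{firstalgorithm} is an assembled statement whose two parts fall out almost directly from results already proved earlier in the paper. First I would observe that part (i) is precisely the content of Theorem \ref{main}: given $G_n(f)$ and the depths $d_1(f),\dots,d_n(f)$, the averaging function $A_{n+1}(-,f)$ is well-defined on the finite set $\mathcal{G}_{n+1}(f)$, and Theorem \ref{main} singles out $G_{n+1}(f)$ as the unique graph in $\mathcal{G}_{n+1}(f)$ with the maximum number of edges among the maximisers of $A_{n+1}$, with the corresponding maximum value equal to $d_{n+1}(f)$. To apply this one needs $G_{n+1}(f)\in\mathcal{G}_{n+1}(f)$, which is automatic from Lemma \ref{complete}: $G_{n+1}(f)$ is a disjoint union of complete graphs, so the ultrametric-style condition on weights in the definition of $\mathcal{G}_{n+1}(f)$ holds, and by construction $G_n'(f)\subsetneq G_{n+1}(f)$ with the new edges having weight $1$.

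Next, for part (ii) I would invoke Lemma \ref{clusterpicgraphs} directly: when $G_{n+1}(f)$ equals the complete graph on $d$ vertices, its weights label each pair $\{v_i,v_j\}$ with an integer between $1$ and $n+1$, and the definition of the auxiliary graph translates the weight $n+2-k$ into the depth $d_k(f)$. Combined with the recovered list $d_1(f),\dots,d_{n+1}(f)$, this reconstructs $\textup{ord}(x_i-x_j)$ for every pair, hence the cluster picture up to relabelling of roots.

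To tie parts (i) and (ii) into a usable algorithm recovering the cluster picture from the data $\{\textup{ord}(J_{G,f}):G\in\mathbf{G}_d\}$, I would proceed by induction starting from $G_0(f)$, the empty graph on $d$ vertices, which requires no input. Iterating part (i) produces $G_1(f),d_1(f),G_2(f),d_2(f),\dots$ in turn, and the process terminates at the first $n+1$ for which the graph produced is the complete graph on $d$ vertices; termination is guaranteed because the sequence of $G_n(f)$ strictly increases in edge count and is bounded by $\binom{d}{2}$. Part (ii) then finishes the job.

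The only potential obstacle is conceptual rather than computational: verifying that the abstract ``maximum over $\mathcal{G}_{n+1}(f)$'' in the statement of part (i) is genuinely attained by $G_{n+1}(f)$ and by no other graph with at least as many edges. That delicate comparison has however already been carried out in Lemma \ref{agraphs}, which Theorem \ref{main} packages, so no new valuation argument is needed here; the proof of the present theorem should be only a few lines recording these references.
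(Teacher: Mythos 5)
Your proposal is correct and matches the paper's own approach: the paper also derives part (i) directly from Theorem \ref{main} and part (ii) from Lemma \ref{clusterpicgraphs}, noting that Theorem \ref{firstalgorithm} "follows immediately" from these. Your additional remarks (invoking Lemma \ref{complete} for membership of $G_{n+1}(f)$ in $\mathcal{G}_{n+1}(f)$, and the induction from $G_0(f)$ with termination at the complete graph) are accurate elaborations of the same argument.
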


Theorem \ref{firstalgorithm} tells us that we can recover the whole cluster picture inductively, starting with $G_0(f)$ and recovering $G_1(f)$ and $d_1(f)$, up to finding $G_{k_f}(f)$ and $d_{k_f}(f)$.

\begin{definition}\label{listofpolys}
By the definition of $J_{G}$ (Definition \ref{invariants}), for every $G\in\mathbf{G}_d$, we can write
\begin{equation}
    J_{G}=\frac{f_{d}^{(G)}(X_1,\dots,X_d)}{\prod_{i<j}(X_i-X_j)^{2\cdot k_G}}
\end{equation}
in its simplest form, where $f_{d}^{(G)}(X_1,\dots,X_d)\in\mathbb{Z}[X_1,\dots,X_d]$ is a symmetric polynomial. Write $A_{i}=(-1)^{d-i}\sum_{1\leq k_1<\cdots<k_{d-i}\leq d} X_{k_1}\cdots X_{k_{d-i}}$ for $i=0,\dots,d-1$. Since $f_{d}^{(G)}(X_1,\dots,X_d)$ is symmetric, we can write it in terms of $A_0,\dots,A_{d-1}$ as 
\begin{equation}
    f_d^{(G)}(X_1,\dots,X_d)=g_d^{(G)}(A_0,\dots,A_{d-1}).
\end{equation}
Write $\prod_{i<j}(X_i-X_j)^{2}=\Delta(A_0,\dots,A_{d-1})$. Define $\mathcal{F}_d=\{\Delta\}\cup\{g_{d}^{(G)}:G\in\mathbf{G}_d\}\subset\mathbb{Z}[A_0,\dots,A_{d-1}]$ and define $t_d=\#\mathcal{F}_d$, noting that $\mathcal{F}_d$ is a finite set because $\mathbf{G}_d$ is a finite set.
\end{definition}

\begin{theorem}\label{polythm}
Let $\mathcal{F}_d=\{g_{d}^{(1)},\dots,g_{d}^{(t_d)}\}$. The valuations $\textup{ord}(g_{d}^{(i)}(a_0,\dots,a_{d-1}))$ for $i=1,\dots,t_d$ uniquely determine the cluster picture of the separable polynomial $f(x)=c_f(x^d+a_{d-1}x^{d-1}+\cdots+a_0)$ over any discretely valued field $K$.
\end{theorem}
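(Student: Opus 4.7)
The plan is to reduce Theorem \ref{polythm} to Theorem \ref{introthmfromg} (equivalently, part (ii) of Theorem \ref{firstalgorithm}), which has already established that the valuations $\textup{ord}(J_{G,f})$ for all $G\in\mathbf{G}_d$ uniquely determine the cluster picture of $f(x)$. The bridge between the two statements is an identity expressing each $J_{G,f}$ as a quotient of a symmetric polynomial in the coefficients of $f$ divided by a power of the discriminant polynomial, so that valuations of the elements of $\mathcal{F}_d$ recover the valuations $\textup{ord}(J_{G,f})$.

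Concretely, since $c_f\neq 0$ the roots of $f(x)$ coincide with the roots of the monic polynomial $x^d+a_{d-1}x^{d-1}+\cdots+a_0$, and Vieta's formulas then give $A_i(x_1,\ldots,x_d)=a_i$ for $i=0,\ldots,d-1$, where the $A_i$ are the signed elementary symmetric polynomials of Definition \ref{listofpolys}. The numerator $f_d^{(G)}$ is symmetric in $X_1,\ldots,X_d$ because both $J_G=\sum_\sigma S_G^\sigma$ and the denominator $\prod_{i<j}(X_i-X_j)^{2k_G}$ are $S_d$-invariant, so the fundamental theorem of symmetric polynomials produces a unique $g_d^{(G)}\in\mathbb{Z}[A_0,\ldots,A_{d-1}]$ with $f_d^{(G)}(X_1,\ldots,X_d)=g_d^{(G)}(A_0,\ldots,A_{d-1})$. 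Substituting the roots of $f$ and using $\prod_{i<j}(x_i-x_j)^2=\Delta(a_0,\ldots,a_{d-1})$ yields
\begin{equation}
J_{G,f}=\frac{g_d^{(G)}(a_0,\ldots,a_{d-1})}{\Delta(a_0,\ldots,a_{d-1})^{k_G}},
\end{equation}
and hence, with the convention $\textup{ord}(0)=\infty$,
\begin{equation}
\textup{ord}(J_{G,f})=\textup{ord}\!\left(g_d^{(G)}(a_0,\ldots,a_{d-1})\right)-k_G\cdot\textup{ord}\!\left(\Delta(a_0,\ldots,a_{d-1})\right).
\end{equation}
Since $k_G$ depends only on the graph $G$, the $t_d$ valuations $\textup{ord}(g_d^{(i)}(a_0,\ldots,a_{d-1}))$ (which include $\textup{ord}(\Delta(a_0,\ldots,a_{d-1}))$ because $\Delta\in\mathcal{F}_d$) determine $\textup{ord}(J_{G,f})$ for every $G\in\mathbf{G}_d$.

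Applying Theorem \ref{introthmfromg} to this data then recovers the cluster picture of $f$, completing the proof. The argument is essentially formal: all substantive content has already been packaged into Theorem \ref{firstalgorithm}, and the only new ingredient is the passage from rational functions in the roots to polynomials in the coefficients via Vieta. The main obstacle, such as it is, is purely bookkeeping: verifying that the presentations $f_d^{(G)}=g_d^{(G)}(A_0,\ldots,A_{d-1})$ and $\prod_{i<j}(X_i-X_j)^2=\Delta(A_0,\ldots,A_{d-1})$ really do fit together over $\mathbb{Z}[A_0,\ldots,A_{d-1}]$, which is a direct consequence of the symmetry of the numerator noted above combined with Lemma \ref{notzero} ensuring that $J_G$ (and hence $g_d^{(G)}$) is nonzero as a formal object so that the quotient presentation makes sense.
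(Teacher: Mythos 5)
Your proof is correct and follows essentially the same route as the paper: express $J_{G,f}=g_d^{(G)}(a_0,\ldots,a_{d-1})/\Delta(a_0,\ldots,a_{d-1})^{k_G}$, observe that $\Delta\in\mathcal{F}_d$ so the listed valuations recover $\textup{ord}(J_{G,f})$ for every $G\in\mathbf{G}_d$, and then invoke Theorem \ref{firstalgorithm}. Your write-up adds some extra bookkeeping (Vieta, the fundamental theorem of symmetric polynomials, and the nonvanishing from Lemma \ref{notzero}) that the paper takes for granted, but the substance of the argument is the same.
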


\begin{proof}
By construction, $J_{G,f}=g_d^{(G)}(a_0,\dots,a_{d-1})/\Delta^{k_G}$. 
Thus, since knowing the valuation of $\Delta(a_0,\dots,a_{d-1})$ and $g_{d}^{(i)}(a_0,\dots,a_{d-1})$ for $i=1,\dots,t_d$ means the valuation of $J_{G,f}$ for all $G\in\mathbf{G}_d$ can be calculated, by Theorem \ref{firstalgorithm}, these valuations determine the cluster picture of $f(x)$ over $K$ with depths. 
\end{proof}

\begin{remark}
We can enumerate $\mathbf{G}_d$ for small $d$ to find that $\#\mathcal{F}_3=3$, $\#\mathcal{F}_4=11$ and $\#\mathcal{F}_5=35$, indicating that size of $\mathcal{F}_d$ grows rapidly with $d$, however we do not have a closed or asymptotic formula. 
\end{remark}

As a corollary to Theorem \ref{polythm}, we obtain the following result.

\begin{corollary}
Let $f(x)$ be a separable polynomial over a discretely valued field $K$. The valuation of all polynomials in the coefficients of $f(x)/c_f$ up to degree $\frac{d^2}{4}(d-1)^2$ uniquely determine the cluster picture of $f(x)$ over $K$.
\end{corollary}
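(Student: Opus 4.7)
The plan is to combine Theorem \ref{polythm} with an explicit degree bound on the polynomials in $\mathcal{F}_d$. Theorem \ref{polythm} reduces the problem to showing that every polynomial in the finite set $\mathcal{F}_d=\{\Delta\}\cup\{g_d^{(G)}:G\in\mathbf{G}_d\}$ has total degree at most $\frac{d^2(d-1)^2}{4}$ in $A_0,\ldots,A_{d-1}$: knowing the valuations of all polynomials of degree up to that bound then subsumes the valuations of the members of $\mathcal{F}_d$. For $\Delta$ itself the bound is immediate, since the $A$-degree is at most the $X$-degree of the discriminant viewed as a polynomial in the roots, namely $d(d-1)$, well within the stated bound.

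For $g_d^{(G)}$ with $G\in\mathbf{G}_d$, I would use the inequality $\deg_A g_d^{(G)} \leq \deg_X f_d^{(G)}$, valid because each monomial $\prod A_i^{e_i}$ has $A$-degree $\sum e_i \leq \sum (d-i)e_i$, its $X$-degree. Writing $J_G = \sum_\sigma S_G^\sigma$ over the common denominator $\Delta^{w_{\max}}$, where $w_{\max}$ is the largest weight of any edge of $G$, each summand contributes a numerator of $X$-degree $w_{\max} d(d-1) - 2\sum_{e\in E} w(e)$, which bounds $\deg_X f_d^{(G)}$ from above. Two combinatorial facts coming from the nested structure in Definition \ref{boldg} finish the estimate. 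First, the vertex partitions $P_k$ into connected components of the subgraph of edges of weight $\geq k$ form a strictly coarsening sequence as $k$ decreases, because no weight-$(k-1)$ edge can lie inside a $P_k$-component, which is already a complete graph. This forces $1 \leq |P_1| < |P_2| < \cdots < |P_{w_{\max}}| \leq d$ with $|P_{w_{\max}}| < d$ (else there would be no weight-$w_{\max}$ edge at all), yielding $w_{\max} \leq d-1$. Second, the same strict refinement gives $|E_k| \geq w_{\max}-k+1$ for the number of edges of weight $\geq k$, so $\sum_{e\in E} w(e) = \sum_{k=1}^{w_{\max}} |E_k| \geq w_{\max}(w_{\max}+1)/2$.

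Combining, $\deg_X f_d^{(G)} \leq w_{\max}(d(d-1)-w_{\max}-1)$, which over $w_{\max}\in\{1,\ldots,d-1\}$ is maximised at $w_{\max}=d-1$, giving $d(d-1)(d-2)$. A direct calculation confirms $d(d-1)(d-2)\leq\frac{d^2(d-1)^2}{4}$ for every $d\geq 2$: this reduces to $d^2-5d+8\geq 0$, which holds for all $d$ since the discriminant is $-7<0$. The main obstacle I anticipate is rigorously justifying the combinatorial bound $w_{\max}\leq d-1$ together with the lower bound on $\sum_{e\in E} w(e)$; both rest on the strictness of the refinement of the $P_k$'s, which in turn relies on the completeness of each component at each level guaranteed by condition (b) of Definition \ref{boldg}. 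Once this combinatorial machinery is in place, the remainder of the argument is a routine degree count.
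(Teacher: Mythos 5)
Your proof is correct, and it takes a genuinely sharper route than the paper's. The paper argues by the crude estimate $w_{\max}\leq\binom{d}{2}$ (coming from surjectivity of $w$ together with $|E|\leq\binom{d}{2}$) and a fairly informal degree count on the common denominator. You instead prove the much tighter bound $w_{\max}\leq d-1$ from the strict refinement of the component partitions $P_k$, and this is a genuine piece of combinatorics that the paper never states. Your argument for strictness is sound: condition (b) of Definition \ref{boldg} forces each $P_k$-component to be a complete graph, so any weight-$(k-1)$ edge must join two distinct $P_k$-components, and surjectivity of $w$ guarantees such an edge exists at every level. Your lower bound $\sum_{e\in E}w(e)\geq w_{\max}(w_{\max}+1)/2$ is also correct (and in fact already follows from surjectivity alone, without the refinement argument, since weights $1,\dots,w_{\max}$ each occur at least once). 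Combining these with the degree count $\deg_X f_d^{(G)}\leq w_{\max}\,d(d-1)-2\sum_{e}w(e)$ and maximizing the resulting concave quadratic in $w_{\max}$ over $\{1,\dots,d-1\}$ gives the sharper bound $d(d-1)(d-2)$, which you correctly verify sits below $\frac{d^2(d-1)^2}{4}$ for all $d\geq 2$. What your approach buys is a quantitatively better degree bound and a cleaner, fully rigorous justification; the paper's argument is shorter but leans on the generous over-estimate $w_{\max}\leq\binom{d}{2}$, and as written the final degree count in the paper is somewhat elliptic. One small point worth polishing: you cite $\deg_A g_d^{(G)}\leq\deg_X f_d^{(G)}$, which is the non-strict inequality and is correct; the paper's corresponding sentence asserts strict inequality, which is not needed and not quite right in general.
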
 

\begin{proof}
Let $G=(V,E,w)\in \mathbf{G}_d$ be a weighted graph on $d$ vertices and let $\textup{num}(J_{G})$ denote the numerator of $J_{G}$ written as a rational function in the variables $X_1,\dots,X_d$. We claim that $\deg(\textup{num}(J_{G}))\leq \frac{d^2}{4}(d-1)^2$. Indeed, there are $\left(\frac{d}{2}\right)$ edges in $G$, and so the maximum weight of an edge in $G$ is $\left(\frac{d}{2}\right)=\frac{d}{2}(d-1)$. There are $\left(\frac{d}{2}\right)=\frac{d}{2}(d-1)$ pairs of variables $(X_i-X_j)^2$ on the denominator of $S_{G}^{\sigma}$, and so when put over a common denominator the numerator has degree less than $\frac{d^2}{4}(d-1)^2$. Hence, the valuation of all symmetric polynomials in the roots of $f(x)$ up to degree $\frac{d^2}{4}(d-1)^2$ uniquely determine the cluster picture of $f(x)$ over $K$. Since the degree of a symmetric polynomial in the roots of $f(x)$ is strictly larger than the degree of the polynomial written in terms of the coefficients, this gives us the result.
\end{proof}

\begin{remark}
It is believed by the author that the process for recovering the cluster picture described in this paper is minimal in the sense that for a discretely valued field $K$ and a weighted graph $G\in \mathbf{G}_d$ with maximum weight $n$, there exists a polynomial $f(x)$ defined over $K$ with $G_n(f)=G$. However, there will be instances where it is possible to extract valuations of the rational functions from previous ones that have been already been calculated.
\end{remark}

\section{Degree $5$ algorithm description}\label{degree5section}

In this section we give a table explicitly describing the rational functions needed to recover the cluster picture of a separable degree $5$ polynomial $f(x)$ over a discretely valued field $K$, and we write out the algorithm by which the cluster picture can be recovered from these rational functions. The algorithm has been implemented for degree $5$ polynomials and is available in the ancillary files to \cite{arxivcode}, along with the rational functions written in terms of the coefficients. When used to calculate the cluster picture of all separable degree $5$ polynomials with coefficients in $\{1,2,3,4,5\}$ over $\mathbb{Q}_7$ it took $2.45$ seconds, whereas the currently implemented method using the SageMath cluster pictures package \cite{alexbest} took $392.53$ seconds. It also appears the SageMath cluster pictures package cannot calculate the cluster picture of a polynomial that has a non-trivial wild inertia action on the roots. 

The auxiliary graphs in $\mathbf{G}_5$ that determine the cluster picture are listed in Table \ref{degree5table} (some are omitted when the cluster picture is uniquely determined by the penultimate auxiliary graph, see Remark \ref{omittingfinal}). Under the column `Summand of $J_{G,f}$', we give a summand of the rational function associated to the auxiliary graph in that row (see Definition \ref{invariants} on how to extract the full rational function from the summand). We write the summand in terms of the roots instead of the rational function in terms of the coefficients of $f(x)$ (see Definition \ref{listofpolys}) because when written in terms of the coefficients they contain too many terms to fit in the paper. When the cluster picture is uniquely determined by the auxiliary graph, we give the cluster picture of a polynomial with such an auxiliary graph in the column `Cluster picture', and we give the depths of the clusters in terms of the rational functions associated to the auxiliary graphs in the column `Depths'. In the column `Example $A_n(G,f)$' we give the value of $A_n(G,f)$ for $G$ the graph in that row and $f(x)=x^5 - 8 x^4 - 823538 x^3 + 4941204 x^2 + 6588464 x + 52706688$ over $\mathbb{Q}_7$ as in Example \ref{degree5ex} below, and we highlight the values that Algorithm \ref{degree5thm} `picks out' to indicate the auxiliary graphs. The auxiliary graphs in the table were enumerated by studying the possible cluster pictures for a degree $5$ polynomial written in \cite{semistable} p.69-70, and considering the possible orderings on the depths of the clusters.

\begin{notation}
In the auxiliary graphs in Table \ref{degree5table}, the edges that do not have a labeled weight have weight $1$. We use $v$ to denote the valuation with respect to a uniformiser of $K$, where $K$ is the base field, and we denote by $\Delta$ the discriminant of $\frac{f(x)}{c_f}$, where $c_f$ is the leading coefficient of $f(x)$. We denote by $x_1,\dots,x_5$ the roots of $f(x)$.
\end{notation}

\begin{algorithm}\label{degree5thm}
For $f(x)$ a degree $5$ polynomial over a discretely valued field $K$ with valuation $v$, the cluster picture of $f(x)$ over $K$ is uniquely determined by calculating the valuations of the rational functions in Table \ref{degree5table} by the following procedure. For a weighted graph $G$ in the table,
\begin{equation}
    J_{G,f}= \sum_{\sigma\in S_5/\textup{Stab}(S_{G,f})}S_{G,f}^\sigma,
\end{equation}
where $S_{G,f}$ is shown in the `Summand of $J_{G,f}$' column, $S_{G,f}^\sigma$ is $S_{G,f}$ under the action of $\sigma\in S_5$ on the roots $x_1,\dots,x_5$ and $\textup{Stab}(S_{G,f})$ is the stabiliser of $S_{G,f}$ under this action, as in Definition \ref{invariants}.

\begin{enumerate}[1.]
        \item \begin{enumerate}[(i)]
            \item Evaluate the value of 
        \begin{equation}
            A_1(G,f)=-\frac{v(J_{G,f})}{2\cdot |E_G|}
        \end{equation}
         for each graph $G$ in Table \ref{degree5table} labelled with one letter. Choose the graph labelled with one letter that has the greatest number of edges out of those that maximise the value of $A_1(-,f)$ and call this $X_1$. The greatest depth in the cluster picture is $d_1(f)=A_1(X_1,f)$. 
         \item If the `Cluster picture' column associated to $X_1$ is not empty, this contains the cluster picture of $f(x)$ over $K$ and the depths of the clusters are written in the `Depths' column in terms of $v(J_{X_1,f})$ and $v(\Delta)$. If the `Cluster picture' column is empty, calculate $X_1X_2$ in Step $2$ below. 
        \end{enumerate} 
        \item \begin{enumerate}[(i)]
            \item Evaluate the value of 
        \begin{equation}
            A_2(X_1G,f)=\frac{-v(J_{X_1G,f})+2v(J_{X_1,f})}{2\cdot (|E_{X_1G}|-|E_{X_1}|)},
        \end{equation}
        for each graph $X_1G$ in Table \ref{degree5table} labelled with two letters and with $X_1$ as the first character. Choose the graph labelled with two letters and with $X_1$ as the first letter that has the greatest number of edges out of those that maximise the value of $A_2(-,f)$ and call this $X_1 X_2$. The second greatest depth in the cluster picture is $d_2(f)=A_2(X_1X_2,f)$. 
        \item If the `Cluster picture' column associated to $X_1X_2$ is not empty, this contains the cluster picture of $f(x)$ over $K$ and the depths of the clusters are written in the `Depths' column in terms of $v(J_{X_1,f})$, $v(J_{X_1X_2,f})$ and $v(\Delta)$. If the `Cluster picture' column is empty, calculate $X_1X_2X_3$ in Step $3$ below.
        \end{enumerate} 
        \item \begin{enumerate}[(i)]
            \item Evaluate the value of 
        \begin{equation}
            A_3(X_1X_2G,f)=\frac{-v(J_{X_1X_2G,f})+2v(J_{X_1X_2,f})-v(J_{X_1,f})}{2\cdot (|E_{X_1X_2G}|-|E_{X_1X_2}|)},
        \end{equation}
        for each graph $X_1X_2G$ in Table \ref{degree5table} labelled with three letters and with $X_1 X_2$ as the first two letters. Choose the graph labelled with three letters and with $X_1 X_2$ as the first two letters that has the greatest number of edges out of those that maximise the value of $A_3(-,f)$ and call this $X_1 X_2 X_3$. The third greatest depth in the cluster picture is $d_3(f)=A_3(X_1X_2X_3,f)$.
        \item The `Cluster picture' column associated to $X_1X_2X_3$ contains the cluster picture of $f(x)$ over $K$ and the depths of the clusters are written in the `Depths' column in terms of $v(J_{X_1,f})$, $v(J_{X_1X_2,f})$, $v(J_{X_1X_2X_3,f})$ and $v(\Delta)$. 
        \end{enumerate}  
    \end{enumerate}
\end{algorithm}

The above algorithm follows immediately from Theorem \ref{main}. In the notation of the paper, $G_1(f)=X_1$, $G_2(f)=X_1X_2$ and $G_3(f)=X_1X_2X_3$. 

\begin{remark}\label{omittingfinal}
On some occasions the cluster picture structure is uniquely determined by the penultimate auxiliary graph. An example of this can be seen for the auxiliary graph $BAA$ in Table \ref{degree5table}. If such a case is reached when performing the algorithm outlined in Theorem \ref{main} for a polynomial of any degree, it is not necessary to calculate the valuation of an extra invariant to calculate the final depth. To illustrate this, note that for a polynomial with $BAA$ as its $3$-rd auxiliary graph, the final auxiliary graph $G_4(f)$ is $BAA$ but with $1$ added to the weight of all preexisting edges and weight $1$ edges between vertices that did not have an edge in $BAA$. The associated rational function is $\frac{1}{\Delta}\cdot J_{BAA}$. This means that $d_4(f)$ can be calculated using the valuation of $\Delta$ and $J_{BAA,f}$, which will have already been calculated at this point in the algorithm.
\end{remark}

\begin{example}\label{degree5ex}
In Table \ref{degree5table} below, we have added an extra column showing the values of $A_n(G,f)$ associated to each graph $G\in \mathbf{G}_5$ for the polynomial 
\begin{equation}
    f(x)=x^5 - 8 x^4 - 823538 x^3 + 4941204 x^2 + 6588464 x + 52706688
\end{equation}
over $\mathbb{Q}_7$, which were calculated using SageMath \cite{sage}. Looking at the values of $A_1(G,f)$ for $G\in\{A,B,C,D, \\ E,F\}$, the largest value is $\frac{7}{2}$ and it is associated to the graph $B$, hence $G_1(f)=B$ and $d_1(f)=\frac{7}{2}$. For the values of $A_2(G,f)$ for $G\in\{BA,BB,BC,BD,BE,BF\}$, the largest is $1$ and it is associated to the graph $BD$, hence $G_2(f)=BD$ and $d_2(f)=1$. Similarly for the values of $A_3(G,f)$ for $G\in\{BDA,BDB,BDC\}$, the largest is $\frac{1}{2}$ and it is associated to the graph $BDA$, hence $G_3(f)=BDA$ and $d_3(f)=\frac{1}{2}$. This uniquely determines the cluster picture to be the one in the row associated to $BDA$, and we can use the valuation of the discriminant $\Delta$ of $f(x)$ and the rational functions to find that $d_4(f)=0$, as demonstrated in Column 5. Thus, labelling the relative depths, the cluster picture of $f(x)$ over $\mathbb{Q}_7$ is 
\begin{center}
    \scalebox{1.75}{
    \clusterpicture             
  \Root[D] {1} {first} {r1};
  \Root[D] {} {r1} {r2};
    \Root[D] {2} {r2} {r3};
    \Root[D] {2.5} {r3} {r4};
    \Root[D] {} {r4} {r5};
  \ClusterD c1[\frac{5}{2}] = (r1)(r2);
  \ClusterD c2[1] = (c1)(r3);
  \ClusterD c3[\frac{1}{2}] = (r4)(r5);
  \ClusterD c4[0] = (c2)(c3);
\endclusterpicture}.
\end{center}
\end{example}

\begin{landscape}
\setlength\extrarowheight{2pt}
\begin{center}
{\setlength{\LTleft}{-0.2in}
 \setlength{\LTright}{0in}
\begin{longtable}{|>{\raggedright\arraybackslash}m{0.9cm}|>{\centering\arraybackslash}m{1.7cm}|>{\centering\arraybackslash}m{7cm}|>{\centering\arraybackslash}m{3.7cm}|>{\scriptsize\raggedright\arraybackslash}m{5.6cm}||>{\centering\arraybackslash}m{1.4cm}|} \caption{Degree $5$ algorithm} \label{degree5table} \\
\hline

\multicolumn{1}{|>{\centering\arraybackslash}m{0.9cm}|}{Graph name} & \multicolumn{1}{|>{\centering\arraybackslash}m{1.7cm}|}{Auxiliary graph $G$} & Summand of $J_{G,f}$ & Cluster picture & \multicolumn{1}{|>{\normalsize\centering\arraybackslash}m{5.6cm}||}{Depths} & Example $A_n(G,f)$ \\

\hline

$A$ & \begin{tikzpicture}
				[scale=0.2,auto=left,every node/.style={circle,fill=black!20,scale=0.6}]
                \node[circle,fill=white] (nonode1) at (0,4)  {};
                \node[circle,fill=white] (nonode1) at (0,-3.5)  {};
                 
				\node (n5) at (378:3)  {};
                \node (n2) at (306:3)  {};
                \node (n4) at (234:3)  {};
                \node (n3) at (162:3)  {};
				\node (n1) at (90:3) {};
				
				\draw (n1) -- (n2);
                     \draw (n1) -- (n3);
                     \draw (n1) -- (n4);
				\draw (n1) -- (n5) ;
                    \draw (n2) -- (n3);
                    \draw (n2) -- (n4);
                    \draw (n2) -- (n5);
				\draw (n3) -- (n4);
                    \draw (n3) -- (n5);
				\draw (n4) -- (n5);
			\end{tikzpicture} & $\frac{1}{\prod_{1\leq i, j\leq 5}(x_i-x_j)^2}$ & \scalebox{1.75}{
    \clusterpicture             
  \Root[D] {1} {first} {r1};
  \Root[D] {} {r1} {r2};
    \Root[D] {} {r2} {r3};
    \Root[D] {} {r3} {r4};
    \Root[D] {} {r4} {r5};
  \ClusterD c1[d_1] = (r1)(r2)(r3)(r4)(r5);
\endclusterpicture} & $d_1=-\frac{1}{20}v(J_{A,f})$ & $\frac{3}{5}$ \\

\hline 

$B$ & \begin{tikzpicture}
				[scale=0.2,auto=left,every node/.style={circle,fill=black!20,scale=0.6}]
                \node[circle,fill=white] (nonode1) at (0,4)  {};
                \node[circle,fill=white] (nonode1) at (0,-3.5)  {};

				\node (n5) at (378:3)  {};
                \node (n2) at (306:3)  {};
                \node (n4) at (234:3)  {};
                \node (n3) at (162:3)  {};
				\node (n1) at (90:3) {};
				
				\draw (n4) -- (n3);
    
			\end{tikzpicture} & $\frac{1}{(x_1-x_2)^2}$ & & & \cellcolor{pink}$ \frac{7}{2}$ \\
   \hline 
$BA$ &     \begin{tikzpicture}
				[scale=0.2,auto=left,every node/.style={circle,fill=black!20,scale=0.6}]
                \node[circle,fill=white] (nonode1) at (0,4)  {};
                \node[circle,fill=white] (nonode1) at (0,-3.5)  {};

				\node (n5) at (378:3)  {};
                \node (n2) at (306:3)  {};
                \node (n4) at (234:3)  {};
                \node (n3) at (162:3)  {};
				\node (n1) at (90:3) {};

				\draw (n2) -- (n5);
				\draw (n3) -- (n4) node [text=black,pos=0.5, left,fill=none] {$2$};
			\end{tikzpicture} & $\frac{1}{(x_1-x_2)^4(x_3-x_4)^2}$ & & & $\frac{1}{2}$ \\

   \hline 
$BAA$ &   \begin{tikzpicture}
				[scale=0.2,auto=left,every node/.style={circle,fill=black!20,scale=0.6}]
                \node[circle,fill=white] (nonode1) at (0,4)  {};
                \node[circle,fill=white] (nonode1) at (0,-3.5)  {};

				\node (n5) at (378:3)  {};
                \node (n2) at (306:3)  {};
                \node (n4) at (234:3)  {};
                \node (n3) at (162:3)  {};
				\node (n1) at (90:3) {};

    \draw (n2) -- (n3);
    \draw (n2) -- (n4);
    \draw (n5) -- (n3);
    \draw (n5) -- (n4);
    
				\draw (n2) -- (n5) node [text=black,pos=0.5, right,fill=none] {$2$};
				\draw (n3) -- (n4) node [text=black,pos=0.5, left,fill=none] {$3$};
			\end{tikzpicture} & $\frac{1}{(x_1-x_2)^6(x_3-x_4)^4(x_1-x_3)^2(x_1-x_4)^2(x_2-x_3)^2(x_2-x_4)^2}$ & \scalebox{1.75}{
    \clusterpicture             
  \Root[D] {1} {first} {r1};
  \Root[D] {} {r1} {r2};
    \Root[D] {3} {r2} {r3};
    \Root[D] {} {r3} {r4};
    \Root[D] {3.5} {r4} {r5};
  \ClusterD c1[d_1] = (r1)(r2);
  \ClusterD c2[d_2] = (r3)(r4);
  \ClusterD c3[d_3] = (c1)(c2);
  \ClusterD c4[d_4] = (c3)(r5);
\endclusterpicture}

$d_1>d_2>d_3>d_4$ & $d_1=-\frac{1}{2}v(J_{B,f})$

$d_2=\frac{1}{2}(-v(J_{BA,f})+2v(J_{B,f}))$

$d_3=\frac{1}{8}(-v(J_{BAA,f})+2v(J_{BA,f})-v(J_{B,f}))$

$d_4 =\frac{1}{8}(v(\Delta)+ v(J_{BAA,f})-v(J_{BA,f}))$

& \\
   \hline 
$BAB$ &    \begin{tikzpicture}
				[scale=0.2,auto=left,every node/.style={circle,fill=black!20,scale=0.6}]
                \node[circle,fill=white] (nonode1) at (0,4)  {};
                \node[circle,fill=white] (nonode1) at (0,-3.5)  {};

				\node (n5) at (378:3)  {};
                \node (n2) at (306:3)  {};
                \node (n4) at (234:3)  {};
                \node (n3) at (162:3)  {};
                
				\node (n1) at (90:3) {};

   \draw (n1) -- (n2);
                     \draw (n1) -- (n3);
                     \draw (n1) -- (n4);
				\draw (n1) -- (n5) ;
                    \draw (n2) -- (n3);
                    \draw (n2) -- (n4);
                    \draw (n3) -- (n5);
				\draw (n4) -- (n5);
    
				\draw (n2) -- (n5) node [text=black,pos=0.5, right,fill=none] {$2$};
				\draw (n3) -- (n4) node [text=black,pos=0.5, left,fill=none] {$3$};
			\end{tikzpicture} & $\frac{1}{(x_1-x_2)^4(x_3-x_4)^2\prod_{1\leq i, j\leq 5} (x_i-x_j)^2}$ & \scalebox{1.75}{
    \clusterpicture             
  \Root[D] {1} {first} {r1};
  \Root[D] {} {r1} {r2};
    \Root[D] {3} {r2} {r3};
    \Root[D] {} {r3} {r4};
    \Root[D] {2.5} {r4} {r5};
  \ClusterD c1[d_1] = (r1)(r2);
  \ClusterD c2[d_2] = (r3)(r4);
  \ClusterD c4[d_3] = (c1)(c2)(r5);
\endclusterpicture} 

$d_1>d_2>d_3$ & $d_1=-\frac{1}{2}v(J_{B,f})$

$d_2=\frac{1}{2}(-v(J_{BA,f})+2v(J_{B,f}))$

$d_3=\frac{1}{16}(-v(J_{BAB,f})+2v(J_{BA,f})-v(J_{B,f}))$

& \\

\hline 
$BAC$ &    \begin{tikzpicture}
				[scale=0.2,auto=left,every node/.style={circle,fill=black!20,scale=0.6}]
                \node[circle,fill=white] (nonode1) at (0,4)  {};
                \node[circle,fill=white] (nonode1) at (0,-3.5)  {};

				\node (n5) at (378:3)  {};
                \node (n2) at (306:3)  {};
                \node (n4) at (234:3)  {};
                \node (n3) at (162:3)  {};
                
				\node (n1) at (90:3) {};

   \draw (n1) -- (n2);
                     
                                    \draw (n1) -- (n5) ;
                
				\draw (n2) -- (n5) node [text=black,pos=0.5, right,fill=none] {$2$};
				\draw (n3) -- (n4) node [text=black,pos=0.5, left,fill=none] {$3$};
			\end{tikzpicture} & $\frac{1}{(x_1-x_2)^6(x_3-x_4)^4(x_5-x_3)^2(x_5-x_4)^2}$ & \scalebox{1.75}{
    \clusterpicture             
  \Root[D] {1} {first} {r1};
  \Root[D] {} {r1} {r2};
    \Root[D] {2} {r2} {r3};
    \Root[D] {2.5} {r3} {r4};
    \Root[D] {} {r4} {r5};
  \ClusterD c1[d_2] = (r1)(r2);
  \ClusterD c2[d_3] = (c1)(r3);
  \ClusterD c3[d_1] = (r4)(r5);
  \ClusterD c4[d_4] = (c2)(c3);
\endclusterpicture} 

$d_1>d_2>d_3>d_4$ &

$d_1=-\frac{1}{2}v(J_{B,f})$

$d_2=\frac{1}{2}(-v(J_{BA,f})+2v(J_{B,f}))$

$d_3 =\frac{1}{4}(-v(J_{B,f}) + 2 v(J_{BA,f}) - v(J_{BAC,f}))$

$d_4=\frac{1}{12}(-v(J_{BA,f}) + v(J_{BAC,f})+ v(\Delta))$

& \\
\hline 
$BAD$ &    \begin{tikzpicture}
				[scale=0.2,auto=left,every node/.style={circle,fill=black!20,scale=0.6}]
                \node[circle,fill=white] (nonode1) at (0,4)  {};
                \node[circle,fill=white] (nonode1) at (0,-3.5)  {};

				\node (n5) at (378:3)  {};
                \node (n2) at (306:3)  {};
                \node (n4) at (234:3)  {};
                \node (n3) at (162:3)  {};
                
				\node (n1) at (90:3) {};

                     \draw (n1) -- (n3);
                     \draw (n1) -- (n4);
				
				\draw (n2) -- (n5) node [text=black,pos=0.5, right,fill=none] {$2$};
				\draw (n3) -- (n4) node [text=black,pos=0.5, left,fill=none] {$3$};
			\end{tikzpicture} & $\frac{1}{(x_1-x_2)^6(x_3-x_4)^4(x_5-x_1)^2(x_5-x_2)^2}$ & \scalebox{1.75}{
    \clusterpicture             
  \Root[D] {1} {first} {r1};
  \Root[D] {} {r1} {r2};
    \Root[D] {2} {r2} {r3};
    \Root[D] {2.5} {r3} {r4};
    \Root[D] {} {r4} {r5};
  \ClusterD c1[d_1] = (r1)(r2);
  \ClusterD c2[d_3] = (c1)(r3);
  \ClusterD c3[d_2] = (r4)(r5);
  \ClusterD c4[d_4] = (c2)(c3);
\endclusterpicture} 

$d_1>d_2>d_3>d_4$ & $d_1=-\frac{1}{2}v(J_{B,f})$

$d_2=\frac{1}{2}(-v(J_{BA,f})+2v(J_{B,f}))$

$d_3 =\frac{1}{4}(-v(J_{B,f}) + 2 v(J_{BA,f}) - v(J_{BAD,f}))$

$d_4=\frac{1}{12}(-v(J_{BA,f}) + v(J_{BAD,f})+ v(\Delta))$

& \\
   \hline 
$BB$ &    \begin{tikzpicture}
				[scale=0.2,auto=left,every node/.style={circle,fill=black!20,scale=0.6}]
                \node[circle,fill=white] (nonode1) at (0,4)  {};
                \node[circle,fill=white] (nonode1) at (0,-3.5)  {};

				\node (n5) at (378:3)  {};
                \node (n2) at (306:3)  {};
                \node (n4) at (234:3)  {};
                \node (n3) at (162:3)  {};
				\node (n1) at (90:3) {};
				
				\draw (n1) -- (n2);
				\draw (n1) -- (n5) ;
				\draw (n2) -- (n5);
				\draw (n3) -- (n4) node [text=black,pos=0.5, left,fill=none] {$2$};
			\end{tikzpicture} & $\frac{1}{(x_1-x_2)^4(x_3-x_4)^2(x_3-x_5)^2(x_4-x_5)^2}$ & \scalebox{1.75}{
    \clusterpicture             
  \Root[D] {1} {first} {r1};
  \Root[D] {} {r1} {r2};
    \Root[D] {3} {r2} {r3};
    \Root[D] {} {r3} {r4};
    \Root[D] {} {r4} {r5};
  \ClusterD c1[d_1] = (r1)(r2);
  \ClusterD c2[d_2] = (r3)(r4)(r5);
  \ClusterD c4[d_3] = (c1)(c2);
\endclusterpicture} 

 $d_1>d_2>d_3$ & $d_1=-\frac{1}{2}v(J_{B,f})$

$d_2=\frac{1}{6}(-v(J_{BB,f})+2v(J_{B,f}))$

$d_3=\frac{1}{12}(v(\Delta)+v(J_{BB,f})-v(J_{B,f}))$

& $\frac{1}{6}$ \\
   \hline 
$BC$ &    \begin{tikzpicture}
				[scale=0.2,auto=left,every node/.style={circle,fill=black!20,scale=0.6}]
                \node[circle,fill=white] (nonode1) at (0,4)  {};
                \node[circle,fill=white] (nonode1) at (0,-3.5)  {};

				\node (n5) at (378:3)  {};
                \node (n2) at (306:3)  {};
                \node (n4) at (234:3)  {};
                \node (n3) at (162:3)  {};
				\node (n1) at (90:3) {};
				
				\draw (n1) -- (n2);
                     \draw (n1) -- (n3);
                     \draw (n1) -- (n4);
				\draw (n1) -- (n5) ;
                    \draw (n2) -- (n3);
                    \draw (n2) -- (n4);
                    \draw (n2) -- (n5);
                    \draw (n3) -- (n5);
				\draw (n4) -- (n5);
				\draw (n3) -- (n4) node [text=black,pos=0.5, left,fill=none] {$2$};
			\end{tikzpicture} & $\frac{1}{(x_1-x_2)^2\prod_{1\leq i, j \leq 5} (x_i-x_j)^2}$ & \scalebox{1.75}{
    \clusterpicture             
  \Root[D] {1} {first} {r1};
  \Root[D] {} {r1} {r2};
    \Root[D] {3} {r2} {r3};
    \Root[D] {} {r3} {r4};
    \Root[D] {} {r4} {r5};
  \ClusterD c1[d_1] = (r1)(r2);
  \ClusterD c2[d_2] = (c1)(r3)(r4)(r5);
\endclusterpicture} 

$d_1>d_2$ & $d_1=-\frac{1}{2}v(J_{B,f})$

$d_2=\frac{1}{18}(-v(J_{BC,f})+2v(J_{B,f}))$

& $\frac{5}{18}$ \\
\hline 
$BD$ &    \begin{tikzpicture}
				[scale=0.2,auto=left,every node/.style={circle,fill=black!20,scale=0.6}]
                \node[circle,fill=white] (nonode1) at (0,4)  {};
                \node[circle,fill=white] (nonode1) at (0,-3.5)  {};
                
				\node (n5) at (378:3)  {};
                \node (n2) at (306:3)  {};
                \node (n4) at (234:3)  {};
                \node (n3) at (162:3)  {};
				\node (n1) at (90:3) {};

                    \draw (n1) -- (n3);
                    \draw (n1) -- (n4);
				\draw (n3) -- (n4) node [text=black,pos=0.5, left,fill=none] {$2$};
			\end{tikzpicture} & $\frac{1}{(x_1-x_2)^4(x_1-x_3)^2(x_2-x_3)^2}$ & & & \cellcolor{pink} $1$ \\
   \hline 
$BDA$ &    \begin{tikzpicture}
				[scale=0.2,auto=left,every node/.style={circle,fill=black!20,scale=0.6}]
                \node[circle,fill=white] (nonode1) at (0,4)  {};
                \node[circle,fill=white] (nonode1) at (0,-3.5)  {};
                 \node[circle,fill=white] (nonode1) at (4,0)  {};
                
				\node (n5) at (378:3)  {};
                \node (n2) at (306:3)  {};
                \node (n4) at (234:3)  {};
                \node (n3) at (162:3)  {};
				\node (n1) at (90:3) {};
				
		          \draw (n5) -- (n2);
                    
                    \draw (n1) -- (n3) node [text=black,pos=0.5, left,fill=none] {$2$};
                    \draw (n1) -- (n4) node [text=black,pos=0.5, right,fill=none] {$2$};
				\draw (n3) -- (n4) node [text=black,pos=0.5, left,fill=none] {$3$};
			\end{tikzpicture} & $\frac{1}{(x_1-x_2)^6(x_1-x_3)^4(x_2-x_3)^4(x_4-x_5)^2}$ & \scalebox{1.75}{
    \clusterpicture             
  \Root[D] {1} {first} {r1};
  \Root[D] {} {r1} {r2};
    \Root[D] {2} {r2} {r3};
    \Root[D] {2.5} {r3} {r4};
    \Root[D] {} {r4} {r5};
  \ClusterD c1[d_1] = (r1)(r2);
  \ClusterD c2[d_2] = (c1)(r3);
  \ClusterD c3[d_3] = (r4)(r5);
  \ClusterD c4[d_4] = (c2)(c3);
\endclusterpicture} 

$d_1>d_2>d_3>d_4$ & $d_1=-\frac{1}{2}v(J_{B,f})$

$d_2=\frac{1}{4}(-v(J_{BD,f})+2v(J_{B,f}))$

$d_3= \frac{1}{2}(-v(J_{BDA,f})+2v(J_{BD,f})-v(J_{B,f}))$

$d_4=\frac{1}{12}(v(\Delta)+v(J_{BDA,f})-v(J_{BD,f}))$

& \cellcolor{pink} $\frac{1}{2}$ \\
\hline 
 $BDB$ &   \begin{tikzpicture}
				[scale=0.2,auto=left,every node/.style={circle,fill=black!20,scale=0.6}]
                \node[circle,fill=white] (nonode1) at (0,4)  {};
                \node[circle,fill=white] (nonode1) at (0,-3.5)  {};
                 \node[circle,fill=white] (nonode1) at (4,0)  {};
                
				\node (n5) at (378:3)  {};
                \node (n2) at (306:3)  {};
                \node (n4) at (234:3)  {};
                \node (n3) at (162:3)  {};
				\node (n1) at (90:3) {};
				
		\draw (n1) -- (n5);
  \draw (n3) -- (n5);
  \draw (n4) -- (n5);
  
                    \draw (n1) -- (n3) node [text=black,pos=0.5, left,fill=none] {$2$};
                    \draw (n1) -- (n4) node [text=black,pos=0.9, above,fill=none] {$2$};
				\draw (n3) -- (n4) node [text=black,pos=0.5, left,fill=none] {$3$};
			\end{tikzpicture} & $\frac{1}{(x_1-x_2)^6(x_1-x_3)^4(x_2-x_3)^4(x_1-x_4)^2(x_2-x_4)^2(x_3-x_4)^2}$ & \scalebox{1.75}{
    \clusterpicture             
  \Root[D] {1} {first} {r1};
  \Root[D] {} {r1} {r2};
    \Root[D] {2} {r2} {r3};
    \Root[D] {2} {r3} {r4};
    \Root[D] {2} {r4} {r5};
  \ClusterD c1[d_1] = (r1)(r2);
  \ClusterD c2[d_2] = (c1)(r3);
  \ClusterD c3[d_3] = (c2)(r4);
  \ClusterD c4[d_4] = (c3)(r5);
\endclusterpicture} 

$d_1>d_2>d_3>d_4$ & $d_1=-\frac{1}{2}v(J_{B,f})$ 

$d_2=\frac{1}{4}(-v(J_{BD,f})+2v(J_{B,f}))$

$d_3=\frac{1}{6}(-v(J_{BDB,f})+2v(J_{BD,f})-v(J_{B,f}))$

$d_4=\frac{1}{8}(v(\Delta) +v(J_{BDB,f})-v(J_{BD,f}))$

& $0$ \\
   \hline 
$BDC$ &    \begin{tikzpicture}
				[scale=0.2,auto=left,every node/.style={circle,fill=black!20,scale=0.6}]
                \node[circle,fill=white] (nonode1) at (0,4)  {};
                \node[circle,fill=white] (nonode1) at (0,-3.5)  {};
                 \node[circle,fill=white] (nonode1) at (4,0)  {};
                
				\node (n5) at (378:3)  {};
                \node (n2) at (306:3)  {};
                \node (n4) at (234:3)  {};
                \node (n3) at (162:3)  {};
				\node (n1) at (90:3) {};

                    \draw (n1) -- (n2);
				\draw (n1) -- (n5) ;
                    \draw (n2) -- (n3);
                    \draw (n2) -- (n4);
                    \draw (n2) -- (n5);
                    \draw (n3) -- (n5);
				\draw (n4) -- (n5);
		
                    \draw (n1) -- (n3) node [text=black,pos=0.5, left,fill=none] {$2$};
                    \draw (n1) -- (n4) node [text=black,pos=0.35, above,fill=none] {$2$};
				\draw (n3) -- (n4) node [text=black,pos=0.5, left,fill=none] {$3$};
			\end{tikzpicture} & $\frac{1}{(x_1-x_2)^4(x_1-x_3)^2(x_2-x_3)^2\prod_{1\leq i, j \leq 5} (x_i-x_j)^2}$ & \scalebox{1.75}{
    \clusterpicture             
  \Root[D] {1} {first} {r1};
  \Root[D] {} {r1} {r2};
    \Root[D] {2} {r2} {r3};
    \Root[D] {2} {r3} {r4};
    \Root[D] {} {r4} {r5};
  \ClusterD c1[d_1] = (r1)(r2);
  \ClusterD c2[d_2] = (c1)(r3);
  \ClusterD c4[d_3] = (c2)(r4)(r5);
\endclusterpicture} 

$d_1>d_2>d_3$ & $d_1=-\frac{1}{2}v(J_{B,f})$

$d_2=\frac{1}{4}(-v(J_{BD,f})+2v(J_{B,f}))$

$d_3=\frac{1}{14}(-v(J_{BDC,f})+2v(J_{BD,f})-v(J_{B,f}))$

& $0$ \\
   \hline 
$BE$ &    \begin{tikzpicture}
				[scale=0.2,auto=left,every node/.style={circle,fill=black!20,scale=0.6}]
                \node[circle,fill=white] (nonode1) at (0,4)  {};
                \node[circle,fill=white] (nonode1) at (0,-3.5)  {};
                
				\node (n5) at (378:3)  {};
                \node (n2) at (306:3)  {};
                \node (n4) at (234:3)  {};
                \node (n3) at (162:3)  {};
				\node (n1) at (90:3) {};
				
				\draw (n3) -- (n2);
				\draw (n3) -- (n5) ;
    
				\draw (n2) -- (n5);
                    \draw (n2) -- (n4);
                    \draw (n4) -- (n5);
				\draw (n3) -- (n4) node [text=black,pos=0.5, left,fill=none] {$2$};
			\end{tikzpicture} & $\frac{1}{(x_1-x_2)^4(x_1-x_3)^2(x_1-x_4)^2(x_2-x_3)^2(x_2-x_4)^2(x_3-x_4)^2}$ & \scalebox{1.75}{
    \clusterpicture             
  \Root[D] {1} {first} {r1};
  \Root[D] {} {r1} {r2};
    \Root[D] {2} {r2} {r3};
    \Root[D] {} {r3} {r4};
    \Root[D] {2} {r4} {r5};
  \ClusterD c1[d_1] = (r1)(r2);
  \ClusterD c2[d_2] = (c1)(r3)(r4);
  \ClusterD c4[d_3] = (c2)(r5);
\endclusterpicture} 

$d_1>d_2>d_3$ & $d_1=-\frac{1}{2}v(J_{B,f})$ 

$d_2=\frac{1}{10}(-v(J_{BE,f})+2v(J_{B,f}))$

$d_3=\frac{1}{8}(v(\Delta)+v(J_{BE,f})-v(J_{B,f}))$

& $\frac{2}{5}$ \\
   \hline 
$BF$ &    \begin{tikzpicture}
				[scale=0.2,auto=left,every node/.style={circle,fill=black!20,scale=0.6}]
                \node[circle,fill=white] (nonode1) at (0,4)  {};
                \node[circle,fill=white] (nonode1) at (0,-3.5)  {};

				\node (n5) at (378:3)  {};
                \node (n2) at (306:3)  {};
                \node (n4) at (234:3)  {};
                \node (n3) at (162:3)  {};
				\node (n1) at (90:3) {};

                    \draw (n2) -- (n5);
				 \draw (n1) -- (n3);
                    \draw (n1) -- (n4);
				\draw (n3) -- (n4) node [text=black,pos=0.5, left,fill=none] {$2$};
			\end{tikzpicture}  & $\frac{1}{(x_1-x_2)^4(x_1-x_3)^2(x_2-x_3)^2(x_4-x_5)^2}$ & \scalebox{1.75}{
    \clusterpicture             
  \Root[D] {1} {first} {r1};
  \Root[D] {} {r1} {r2};
    \Root[D] {2} {r2} {r3};
    \Root[D] {2} {r3} {r4};
    \Root[D] {} {r4} {r5};
  \ClusterD c1[d_1] = (r1)(r2);
  \ClusterD c2[d_2] = (c1)(r3);
   \ClusterD c3[d_2] = (r4)(r5);
  \ClusterD c4[d_3] = (c2)(c3);
\endclusterpicture} 

$d_1>d_2>d_3$ & $d_1=-\frac{1}{2}v(J_{B,f})$ 

$d_2=\frac{1}{6}(-v(J_{BF,f})+2v(J_{B,f}))$ 

$d_3=\frac{1}{12}(v(\Delta)+v(J_{BF,f})-v(J_{B,f}))$

& $\frac{5}{6}$ \\
   \hline 

$C$ & \begin{tikzpicture}
				[scale=0.2,auto=left,every node/.style={circle,fill=black!20,scale=0.6}]
                \node[circle,fill=white] (nonode1) at (0,4)  {};
                \node[circle,fill=white] (nonode1) at (0,-3.5)  {};

				\node (n5) at (378:3)  {};
                \node (n2) at (306:3)  {};
                \node (n4) at (234:3)  {};
                \node (n3) at (162:3)  {};
				\node (n1) at (90:3) {};
				
				\draw (n1) -- (n2);
				\draw (n1) -- (n5) ;
				\draw (n2) -- (n5);
			\end{tikzpicture} & $\frac{1}{(x_1-x_2)^2(x_1-x_3)^2(x_2-x_3)^2}$ & & & $\frac{11}{6}$ \\
   \hline 
$CA$ &    \begin{tikzpicture}
				[scale=0.2,auto=left,every node/.style={circle,fill=black!20,scale=0.6}]
                \node[circle,fill=white] (nonode1) at (0,4)  {};
                \node[circle,fill=white] (nonode1) at (0,-3.5)  {};

				\node (n5) at (378:3)  {};
                \node (n2) at (306:3)  {};
                \node (n4) at (234:3)  {};
                \node (n3) at (162:3)  {};
				\node (n1) at (90:3) {};

				\draw (n3) -- (n4);
    
				\draw (n1) -- (n2) node [text=black,pos=0.5, left,fill=none] {$2$};
				\draw (n1) -- (n5) node [text=black,pos=0.5, above,fill=none] {$2$};
				\draw (n2) -- (n5) node [text=black,pos=0.5, right,fill=none] {$2$};
			\end{tikzpicture} & $\frac{1}{(x_1-x_2)^4(x_1-x_3)^4(x_2-x_3)^4(x_4-x_5)^2}$ & \scalebox{1.75}{
    \clusterpicture             
  \Root[D] {1} {first} {r1};
  \Root[D] {} {r1} {r2};
    \Root[D] {} {r2} {r3};
    \Root[D] {3} {r3} {r4};
    \Root[D] {} {r4} {r5};
  \ClusterD c1[d_1] = (r1)(r2)(r3);
  \ClusterD c2[d_2] = (r4)(r5);
   \ClusterD c3[d_3] = (c1)(c2);
\endclusterpicture} 

$d_1>d_2>d_3$ & $d_1=-\frac{1}{6}v(J_{C,f})$ 

$d_2= \frac{1}{2}(-v(J_{CA,f})+2v(J_{C,f}))$

$ d_3= \frac{1}{12}(v(\Delta)+v(J_{CA,f})-v(J_{C,f}))$

& \\
\hline 
 $CB$ &   \begin{tikzpicture}
				[scale=0.2,auto=left,every node/.style={circle,fill=black!20,scale=0.6}]
                \node[circle,fill=white] (nonode1) at (0,4)  {};
                \node[circle,fill=white] (nonode1) at (0,-3.5)  {};

				\node (n5) at (378:3)  {};
                \node (n2) at (306:3)  {};
                \node (n4) at (234:3)  {};
                \node (n3) at (162:3)  {};
				\node (n1) at (90:3) {};

                     \draw (n1) -- (n3);
                     \draw (n1) -- (n4);
			
                    \draw (n2) -- (n3);
                    \draw (n2) -- (n4);
            
				\draw (n3) -- (n4);
                    \draw (n3) -- (n5);
				\draw (n4) -- (n5);
				
				\draw (n1) -- (n2) node [text=black,pos=0.4, above,fill=none] {$2$};
				\draw (n1) -- (n5) node [text=black,pos=0.5, above,fill=none] {$2$};
				\draw (n2) -- (n5) node [text=black,pos=0.5, right,fill=none] {$2$};
			\end{tikzpicture} & $\frac{1}{(x_1-x_2)^2(x_1-x_3)^2(x_2-x_3)^2\prod_{1\leq i,j \leq 5}(x_i-x_j)^2}$ & \scalebox{1.75}{
    \clusterpicture             
  \Root[D] {1} {first} {r1};
  \Root[D] {} {r1} {r2};
    \Root[D] {} {r2} {r3};
    \Root[D] {2.5} {r3} {r4};
    \Root[D] {} {r4} {r5};
  \ClusterD c1[d_1] = (r1)(r2)(r3);
  \ClusterD c2[d_2] = (c1)(r4)(r5);
\endclusterpicture} 

$d_1>d_2$ & $d_1=-\frac{1}{6}v(J_{C,f})$

$d_2=\frac{1}{14}(-v(J_{CB,f})+2v(J_{C,f}))$

& \\

\hline 
   
$CC$ &    \begin{tikzpicture}
				[scale=0.2,auto=left,every node/.style={circle,fill=black!20,scale=0.6}]
                \node[circle,fill=white] (nonode1) at (0,4)  {};
                \node[circle,fill=white] (nonode1) at (0,-3.5)  {};

				\node (n5) at (378:3)  {};
                \node (n2) at (306:3)  {};
                \node (n4) at (234:3)  {};
                \node (n3) at (162:3)  {};
				\node (n1) at (90:3) {};

                    \draw (n1) -- (n4);
                    \draw (n2) -- (n4);
                    \draw (n5) -- (n4);
                    
				\draw (n1) -- (n2) node [text=black,pos=0.5, above,fill=none] {$2$};
				\draw (n1) -- (n5) node [text=black,pos=0.5, above,fill=none] {$2$};
				\draw (n2) -- (n5) node [text=black,pos=0.5, right,fill=none] {$2$};
			\end{tikzpicture} & $\frac{1}{(x_1-x_2)^4(x_1-x_3)^4(x_2-x_3)^4(x_1-x_4)^2(x_2-x_4)^2(x_3-x_4)^2}$ & \scalebox{1.75}{
    \clusterpicture             
  \Root[D] {1} {first} {r1};
  \Root[D] {} {r1} {r2};
    \Root[D] {} {r2} {r3};
    \Root[D] {2} {r3} {r4};
    \Root[D] {2} {r4} {r5};
  \ClusterD c1[d_1] = (r1)(r2)(r3);
  \ClusterD c2[d_2] = (c1)(r4);
   \ClusterD c3[d_3] = (c2)(r5);
\endclusterpicture} 

$d_1>d_2>d_3$ & $d_1=-\frac{1}{6}v(J_{C,f})$

$d_2=\frac{1}{6}(-v(J_{CC,f})+2v(J_{C,f}))$

$d_3=\frac{1}{8}(v(\Delta)+v(J_{CC,f})-v(J_{C,f}))$

& \\

\hline

 $D$ & \begin{tikzpicture}
				[scale=0.2,auto=left,every node/.style={circle,fill=black!20,scale=0.6}]
                \node[circle,fill=white] (nonode1) at (0,4)  {};
                \node[circle,fill=white] (nonode1) at (0,-3.5)  {};

				\node (n5) at (378:3)  {};
                \node (n2) at (306:3)  {};
                \node (n4) at (234:3)  {};
                \node (n3) at (162:3)  {};
				\node (n1) at (90:3) {};
				
				\draw (n3) -- (n2);
				\draw (n3) -- (n5) ;
                    \draw (n3) -- (n4) ;
    
				\draw (n2) -- (n5);
                    \draw (n2) -- (n4);
                    \draw (n4) -- (n5);
			\end{tikzpicture} & $\frac{1}{(x_1-x_2)^2(x_1-x_3)^2(x_1-x_4)^2(x_2-x_3)^2(x_2-x_4)^2(x_3-x_4)^2}$ & \scalebox{1.75}{
    \clusterpicture             
  \Root[D] {1} {first} {r1};
  \Root[D] {} {r1} {r2};
    \Root[D] {} {r2} {r3};
    \Root[D] {} {r3} {r4};
    \Root[D] {2} {r4} {r5};
  \ClusterD c1[d_1] = (r1)(r2)(r3)(r4);
  \ClusterD c2[d_2] = (c1)(r5);
\endclusterpicture} 

$d_1>d_2$ & $d_1=-\frac{1}{12}v(J_{D,f})$

$d_2=\frac{1}{8}(v(\Delta)+v(J_{D,f}))$ & $\frac{11}{12}$ \\

\hline 

$E$ & \begin{tikzpicture}
				[scale=0.2,auto=left,every node/.style={circle,fill=black!20,scale=0.6}]
                \node[circle,fill=white] (nonode1) at (0,4)  {};
                \node[circle,fill=white] (nonode1) at (0,-3.5)  {};

				\node (n5) at (378:3)  {};
                \node (n2) at (306:3)  {};
                \node (n4) at (234:3)  {};
                \node (n3) at (162:3)  {};
				\node (n1) at (90:3) {};
				
				\draw (n4) -- (n3);
                    \draw (n2) -- (n5);
    
			\end{tikzpicture} & $\frac{1}{(x_1-x_2)^2(x_3-x_4)^2}$ & & & $2$ \\
   \hline 
  $EA$ &  \begin{tikzpicture}
				[scale=0.2,auto=left,every node/.style={circle,fill=black!20,scale=0.6}]
                \node[circle,fill=white] (nonode1) at (0,4)  {};
                \node[circle,fill=white] (nonode1) at (0,-3.5)  {};

				\node (n5) at (378:3)  {};
                \node (n2) at (306:3)  {};
                \node (n4) at (234:3)  {};
                \node (n3) at (162:3)  {};
				\node (n1) at (90:3) {};

    \draw (n1) -- (n2);
                     \draw (n1) -- (n3);
                     \draw (n1) -- (n4);
				\draw (n1) -- (n5) ;
                    \draw (n2) -- (n3);
                    \draw (n2) -- (n4);
                    \draw (n3) -- (n5);
				\draw (n4) -- (n5);
				
				\draw (n4) -- (n3) node [text=black,pos=0.4, left,fill=none] {$2$};
                    \draw (n2) -- (n5) node [text=black,pos=0.4, right,fill=none] {$2$};
    
			\end{tikzpicture} & $\frac{1}{(x_1-x_2)^2(x_3-x_4)^2\prod_{1\leq i, j \leq 5}(x_i-x_j)^2}$ & \scalebox{1.75}{
    \clusterpicture             
  \Root[D] {1} {first} {r1};
  \Root[D] {} {r1} {r2};
    \Root[D] {3} {r2} {r3};
    \Root[D] {} {r3} {r4};
    \Root[D] {2} {r4} {r5};
  \ClusterD c1[d_1] = (r1)(r2);
  \ClusterD c2[d_1] = (r3)(r4);
   \ClusterD c3[d_2] = (c1)(c2)(r5);
\endclusterpicture} 

$d_1>d_2$ & $d_1 =-\frac{1}{4}v(J_{E,f})$

$d_2=\frac{1}{16}(-v(J_{EA,f})+2v(J_{E,f}))$ & \\

\hline 
   
$EB$ &    \begin{tikzpicture}
				[scale=0.2,auto=left,every node/.style={circle,fill=black!20,scale=0.6}]
                \node[circle,fill=white] (nonode1) at (0,4)  {};
                \node[circle,fill=white] (nonode1) at (0,-3.5)  {};

				\node (n5) at (378:3)  {};
                \node (n2) at (306:3)  {};
                \node (n4) at (234:3)  {};
                \node (n3) at (162:3)  {};
				\node (n1) at (90:3) {};

    \draw (n3) -- (n2);
				\draw (n3) -- (n5) ;
                    \draw (n2) -- (n4);
                    \draw (n4) -- (n5);
				
				\draw (n4) -- (n3) node [text=black,pos=0.4, left,fill=none] {$2$};
                    \draw (n2) -- (n5) node [text=black,pos=0.4, right,fill=none] {$2$};
    
			\end{tikzpicture} & $\frac{1}{(x_1-x_2)^4(x_3-x_4)^4(x_1-x_3)^2(x_1-x_4)^2(x_2-x_3)^2(x_2-x_4)^2}$ & \scalebox{1.75}{
    \clusterpicture             
  \Root[D] {1} {first} {r1};
  \Root[D] {} {r1} {r2};
    \Root[D] {3} {r2} {r3};
    \Root[D] {} {r3} {r4};
    \Root[D] {3.5} {r4} {r5};
  \ClusterD c1[d_1] = (r1)(r2);
  \ClusterD c2[d_1] = (r3)(r4);
   \ClusterD c3[d_2] = (c1)(c2);
   \ClusterD c4[d_3] = (c3)(r5);
\endclusterpicture} 

$d_1>d_2>d_3$ & $d_1 =-\frac{1}{4}v(J_{E,f})$

$d_2=\frac{1}{8}(-v(J_{EB,f})+2v(J_{E,f}))$

$d_3=\frac{1}{8}(v(\Delta)+v(J_{EB,f})-v(J_{E,f}))$
& \\

\hline 
  
 $EC$ &   \begin{tikzpicture}
				[scale=0.2,auto=left,every node/.style={circle,fill=black!20,scale=0.6}]
                \node[circle,fill=white] (nonode1) at (0,4)  {};
                \node[circle,fill=white] (nonode1) at (0,-3.5)  {};

				\node (n5) at (378:3)  {};
                \node (n2) at (306:3)  {};
                \node (n4) at (234:3)  {};
                \node (n3) at (162:3)  {};
				\node (n1) at (90:3) {};

                    \draw (n4) -- (n1);
                    \draw (n1) -- (n3);
				\draw (n4) -- (n3) node [text=black,pos=0.4, left,fill=none] {$2$};
                    \draw (n2) -- (n5) node [text=black,pos=0.4, right,fill=none] {$2$};
    
			\end{tikzpicture} & $\frac{1}{(x_1-x_2)^4(x_3-x_4)^4(x_1-x_5)^2(x_2-x_5)^2}$ & \scalebox{1.75}{
    \clusterpicture             
  \Root[D] {1} {first} {r1};
  \Root[D] {} {r1} {r2};
    \Root[D] {4} {r2} {r3};
    \Root[D] {} {r3} {r4};
    \Root[D] {2} {r4} {r5};
  \ClusterD c1[d_1] = (r1)(r2);
  \ClusterD c2[d_1] = (r3)(r4);
   \ClusterD c3[d_2] = (c2)(r5);
   \ClusterD c4[d_3] = (c1)(c3);
\endclusterpicture} 

$d_1>d_2>d_3$ & $d_1 =-\frac{1}{4}v(J_{E,f})$

$d_2=\frac{1}{4}(-v(J_{EC,f})+2v(J_{E,f}))$

$d_3=\frac{1}{12}(v(\Delta)+v(J_{EC,f})-v(J_{E,f}))$

& \\
   \hline 

$F$ & \begin{tikzpicture}
				[scale=0.2,auto=left,every node/.style={circle,fill=black!20,scale=0.6}]
                \node[circle,fill=white] (nonode1) at (0,4)  {};
                \node[circle,fill=white] (nonode1) at (0,-3.5)  {};

				\node (n5) at (378:3)  {};
                \node (n2) at (306:3)  {};
                \node (n4) at (234:3)  {};
                \node (n3) at (162:3)  {};
				\node (n1) at (90:3) {};
				
				\draw (n1) -- (n2);
				\draw (n1) -- (n5) ;
				\draw (n2) -- (n5);
				\draw (n3) -- (n4);
			\end{tikzpicture} & $\frac{1}{(x_1-x_2)^2(x_3-x_4)^2(x_3-x_5)^2(x_4-x_5)^2}$ & \scalebox{1.75}{
    \clusterpicture             
  \Root[D] {1} {first} {r1};
  \Root[D] {} {r1} {r2};
    \Root[D] {3} {r2} {r3};
    \Root[D] {} {r3} {r4};
    \Root[D] {} {r4} {r5};
  \ClusterD c1[d_1] = (r1)(r2);
  \ClusterD c2[d_1] = (r3)(r4)(r5);
   \ClusterD c3[d_2] = (c1)(c2);
\endclusterpicture} 

$d_1>d_2$ & $d_1 =-\frac{1}{8}v(J_{F,f})$

$d_2= \frac{1}{12}(v(\Delta)+v(J_{F,f}))$

& $\frac{3}{2}$ \\ 
   \hline 

\end{longtable}}

\end{center}

\end{landscape}

\end{document}